\newcommand{\bburl}[1]{\textcolor{blue}{\url{#1}}}
\newcommand{\be}{\begin{equation}}
\newcommand{\ee}{\end{equation}}
\DeclareMathOperator{\sgn}{sgn}
\newtheorem{thm}{Theorem}[section]
\newtheorem{cor}[thm]{Corollary}
\newtheorem{claim}[thm]{Claim}
\newtheorem{lem}[thm]{Lemma}
\newtheorem{prop}[thm]{Proposition}
\newtheorem{defi}[thm]{Definition}
\newtheorem{rek}[thm]{Remark}
\DeclareMathOperator{\supp}{supp}
\DeclareMathOperator{\spann}{span}
\numberwithin{equation}{section}
\begin{document}

\title[Schreier Families \& $\mathcal{F}$-greedy-type bases]{Schreier Families and $\mathcal{F}$-(almost) greedy bases}

\author{Kevin Beanland}

\email{\textcolor{blue}{\href{mailto:beanlandk@wlu.edu}{beanlandk@wlu.edu}}}
\address{Department of Mathematics, Washington and Lee University, Lexington, VA 24450, USA.}

\author{H\`ung Vi\d{\^e}t Chu}

\email{\textcolor{blue}{\href{mailto:hungchu2@illinois.edu}{hungchu2@illinois.edu}}}
\address{Department of Mathematics, University of Illinois at Urbana-Champaign, Urbana, IL 61820, USA}

\begin{abstract} 
Let $\mathcal{F}$ be a hereditary collection of finite subsets of $\mathbb{N}$. In this paper, we introduce and characterize $\mathcal{F}$-(almost) greedy bases.  Given such a  family $\mathcal{F}$, a basis $(e_n)_n$ for a Banach space $X$ is called $\mathcal{F}$-greedy if there is a constant $C\geqslant 1$ such that for each $x\in X$, $m \in \mathbb{N}$, and $G_m(x)$, we have
$$\|x - G_m(x)\|\ \leqslant\ C \inf\left\{\left\|x-\sum_{n\in A}a_ne_n\right\|\,:\, |A|\leqslant m, A\in \mathcal{F}, (a_n)\subset \mathbb{K}\right\}.$$
Here $G_m(x)$ is a greedy sum of $x$ of order $m$, and $\mathbb{K}$ is the scalar field. From the definition, any $\mathcal{F}$-greedy basis is quasi-greedy and so, the notion of being $\mathcal{F}$-greedy lies between being greedy and being quasi-greedy. We characterize $\mathcal{F}$-greedy bases as being $\mathcal{F}$-unconditional, $\mathcal{F}$-disjoint democratic, and quasi-greedy, thus generalizing the well-known characterization of greedy bases by Konyagin and Temlyakov. We also prove a similar characterization for $\mathcal{F}$-almost greedy bases.

Furthermore, we provide several examples of bases that are nontrivially $\mathcal{F}$-greedy. For a countable ordinal $\alpha$, we consider the case  $\mathcal{F}=\mathcal{S}_\alpha$, where $\mathcal{S}_\alpha$ is the Schreier family of order $\alpha$. We show that for each $\alpha$, there is a basis that is $\mathcal{S}_{\alpha}$-greedy but is not $\mathcal{S}_{\alpha+1}$-greedy. In other words, we prove that none of the following implications can be reversed: for two countable ordinals $\alpha < \beta$, 
$$\mbox{quasi-greedy}\ \Longleftarrow\ \mathcal{S}_\alpha\mbox{-greedy}\ \Longleftarrow\ \mathcal{S}_\beta\mbox{-greedy}\ \Longleftarrow\ \mbox{greedy}.$$

\end{abstract}

\subjclass[2020]{41A65; 46B15}

\keywords{Thresholding Greedy Algorithm, Schreier unconditional, Schreier families}

\thanks{The second author acknowledges the summer funding from the Department of Mathematics at the University of Illinois at Urbana-Champaign.}

\maketitle

\tableofcontents

\section{Introduction}

A (semi-normalized) \textit{basis} in a Banach space $X$ over the field $\mathbb{K}$ is a countable collection $(e_n)_n$ such that 
\begin{enumerate}
    \item[i)] $\overline{\spann\{e_n: n\in\mathbb{N}\}} = X$,
    \item[ii)] there exists a unique sequence $(e_n^*)_n\subset X^*$ such that $e_i^*(e_j) = \delta_{i, j}$ for all $i, j\in\mathbb{N}$, and
    \item[iii)] there exist $c_1, c_2 > 0$ such that 
    $$0 \ <\ c_1 := \inf_n\{\|e_n\|, \|e_n^*\|\}\ \leqslant\ \sup_n\{\|e_n\|, \|e_n^*\|\} \ =:\ c_2 \ <\ \infty.$$
\end{enumerate}

In 1999, Konyagin and Temlyakov \cite{KT1} introduced the Thresholding Greedy Algorithm (TGA), which picks the largest coefficients (in modulus) for the approximation. In particular, for each $x\in X$ and $m\in\mathbb{N}$, a set $\Lambda_m(x)$ is a \textit{greedy set} of order $m$ if $|\Lambda_m(x)| = m$ and $\min_{n\in\Lambda_m(x)}|e_n^*(x)|\geqslant \max_{n\notin \Lambda_m(x)}|e_n^*(x)|$. A \textit{greedy operator} $G_m: X \to X$ is defined as 
$$G_m(x) \ =\ \sum_{n\in \Lambda_m(x)}e_n^*(x)e_n,\mbox{ for some }\Lambda_m(x).$$
Note that $\Lambda_m(x)$ (and thus, $G_m(x)$) may not be unique and $G_m$ is not even linear. 
The TGA is a sequence of greedy operators $(G_m)_{m=1}^\infty$ that gives the corresponding sequence of approximants $(G_m(x))_{m=1}^\infty$  for each $x\in X$.

A basis $(e_n)_n$ for a Banach space $X$ is called \textit{greedy} if there is a $C\geqslant 1$ such that for all $x\in X,m\in \mathbb{N}$, and $G_m$,
$$\|x - G_m(x)\| \ \leqslant\ \  C \inf\left\{\left\|x-\sum_{n\in A}a_ne_n\right\|\,:\, |A|\leqslant m, (a_n)\subset \mathbb{K}\right\}.$$
A basis is called quasi-greedy \cite{KT1} if there is a $C\geqslant 1$ so that for all $x\in X, m\in \mathbb{N}$, and $G_m$, we have $\|G_m(x)\| \leqslant  C\|x\|$. The smallest such $C$ is denoted by $\mathbf C_w$, called the \textit{quasi-greedy constant}. 
Also for quasi-greedy bases, let $\mathbf C_\ell$, called the \textit{suppression quasi-greedy constant}, be the smallest constant such that 
$$\|x-G_m(x)\| \ \leqslant\ \mathbf C_\ell\|x\|,\forall x\in X, \forall m\in\mathbb{N}, \forall G_m.$$
There are many examples of quasi-greedy bases that are not greedy (see \cite[Example 10.2.9]{AK}), and there has been research on the existence of greedy bases for certain classical spaces (\cite{DFOS, G}). 

In this paper, we introduce and study the notion of what we call $\mathcal{F}$-greedy bases which interpolate between greedy bases and quasi-greedy bases. Recall that a collection $\mathcal{F}$ of finite subsets of $\mathbb{N}$ is said to be \textit{hereditary} if $F\in \mathcal{F}$ and $G \subset F$ imply $G \in \mathcal{F}$.

\begin{defi}\normalfont
Let $\mathcal{F}$ be a hereditary collection of finite subsets of $\mathbb{N}$. A basis $(e_n)_n$ is $\mathcal{F}$-greedy if there exists a constant $C\geqslant 1$ such that for all $x\in X,m\in \mathbb{N}$, and $G_m$,
$$\|x - G_m(x)\|\ \leqslant \ C\sigma_m^{\mathcal{F}}(x),$$
where $$\sigma_m^{\mathcal{F}}(x)\ := \inf\left\{\left\|x-\sum_{n\in A}a_ne_n\right\|\,:\, |A|\leqslant m, A\in \mathcal{F}, (a_n)\subset \mathbb{K}\right\}.$$
The least constant $C$ is denoted by $\mathbf{C}_g^{\mathcal{F}}$.
\end{defi}

\begin{rek}In the case the $\mathcal{F}=\mathcal{P}(\mathbb{N})$,  $\mathcal{F}$-greedy corresponds to greedy and when $\mathcal{F}=\{\emptyset\}$, $\mathcal{F}$-greedy corresponds to quasi-greedy.
\end{rek}

The first order of business is to generalize the theorem of Konyagin and Temlyakov, which characterizes greedy bases as being  unconditional and democratic. To do so, we introduce the  definitions of $\mathcal{F}$- unconditionality and $\mathcal{F}$-democracy. For various families $\mathcal{F}$, the notion of $\mathcal{F}$-unconditionality has appeared numerous times in the literature, most notably in Odell's result \cite{Od}, which states that every normalized weakly null sequence in a Banach space has a subsequence that is Schreier-unconditional. Also see \cite{AG, AGR, AMT} for other notion of unconditionality for weakly null sequences. 

For a basis $(e_n)_n$ of a Banach space $X$ and a finite set $A \subset \mathbb{N}$, let $P_A: X \to X$ be defined by $P_A(\sum_i e^*_i(x) e_i)=\sum_{i\in A} e^*_i(x) e_i$.

\begin{defi}\normalfont
A basis $(e_n)$ of a Banach space $X$ is $\mathcal{F}$-unconditional if there exists a constant $C\geqslant 1$ such that for each $x \in X$ and $A \in \mathcal{F}$, we have
$$\|x- P_A(x)\|\ \leqslant\ C\|x\|.$$
The least constant $C$ is denoted by $\mathbf K_s^{\mathcal{F}}$. We say that $(e_n)$ is $\mathbf K_s^{\mathcal{F}}$-$\mathcal{F}$-suppression unconditional.
\end{defi}

As far as we know, the following  natural definition has not appeared in the literature before. 

\begin{defi}\label{demo}\normalfont
A basis $(e_n)$ is $\mathcal{F}$-disjoint democratic ($\mathcal{F}$-disjoint superdemocratic, respectively) if there exists a constant $C\geqslant 1$ such that 
$$\left\|\sum_{i\in A}e_i\right\|\ \leqslant \ C\left\|\sum_{i\in B}e_i\right\|,\mbox{ }\left(\left\|\sum_{i\in A}\varepsilon_i e_i\right\|\ \leqslant\ C\left\|\sum_{i\in B} \delta_i e_i\right\|,\mbox{ respectively}\right),$$
for all finite sets $A, B\subset\mathbb{N}$ with $A\in \mathcal{F}$, $|A|\leqslant |B|, A\cap B = \emptyset$ and signs $(\varepsilon_i), (\delta_i)$. The least constant $C$ is denoted by $\mathbf C^{\mathcal{F}}_{d,\sqcup}$ ($\mathbf C^{\mathcal{F}}_{sd,\sqcup}$, respectively). When $\mathcal{F} = \mathcal{P}(\mathbb{N})$, we say that $(e_n)$ is (super)democratic. 
\end{defi}

One of our main results is the following generalization of the Konyagin-Temlyakov Theorem \cite{KT1}.

\begin{thm}\label{m1'}
A basis $(e_n)$ in a Banach space $X$ is $\mathcal{F}$-greedy if and only if it is quasi-greedy, $\mathcal{F}$-unconditional, and $\mathcal{F}$-disjoint democratic.
\end{thm}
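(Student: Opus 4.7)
The forward direction follows from three perturbation arguments in the spirit of Konyagin--Temlyakov. For quasi-greedy, since $\emptyset\in\mathcal{F}$ by heredity, taking $A=\emptyset$ in the infimum shows $\sigma_m^{\mathcal{F}}(x)\leqslant\|x\|$, hence $\|x-G_m(x)\|\leqslant\mathbf{C}_g^{\mathcal{F}}\|x\|$. For $\mathcal{F}$-unconditional, given $A\in\mathcal{F}$ and $x\in X$, set $\tilde x=x+\lambda\sum_{n\in A}e_n$ with $\lambda>2\sup_n|e_n^*(x)|$; then $A$ is the unique greedy set of $\tilde x$ of order $|A|$, so $\tilde x-G_{|A|}(\tilde x)=x-P_A(x)$, while the competitor $\lambda\sum_{n\in A}e_n\in\text{span}\{e_n:n\in A\}$ with $A\in\mathcal{F}$ witnesses $\sigma_{|A|}^{\mathcal{F}}(\tilde x)\leqslant\|x\|$. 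For $\mathcal{F}$-disjoint democratic, given disjoint $A\in\mathcal{F}$ and $B$ with $|A|\leqslant|B|$, apply the $\mathcal{F}$-greedy inequality to $x=\sum_{n\in A}e_n+(1+\epsilon)\sum_{n\in B}e_n$: the order-$|B|$ greedy operator selects $B$, so $x-G_{|B|}(x)=\sum_{n\in A}e_n$, while $\sum_{n\in A}e_n$ itself is a valid $\mathcal{F}$-competitor yielding $\sigma_{|B|}^{\mathcal{F}}(x)\leqslant(1+\epsilon)\|\sum_{n\in B}e_n\|$. Letting $\epsilon\downarrow 0$ finishes.

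For the converse I run a Lebesgue-type argument. Fix $x\in X$, $m\in\mathbb{N}$, a greedy set $\Lambda=\Lambda_m(x)$, the threshold $\alpha=\min_{n\in\Lambda}|e_n^*(x)|$, and any competitor $y=\sum_{n\in A}a_ne_n$ with $A\in\mathcal{F}$, $|A|\leqslant m$. The starting identity is
\[
x-G_m(x)=(I-P_\Lambda)(x-y)+P_{A\setminus\Lambda}(y),
\]
and I further split $\Lambda=\Lambda_1\sqcup\Lambda_2$ with $\Lambda_1=\Lambda\cap A\in\mathcal{F}$ (by heredity) and $\Lambda_2=\Lambda\setminus A$, so that $(I-P_\Lambda)(x-y)=(I-P_{\Lambda_1})(x-y)-P_{\Lambda_2}(x-y)$; the $(I-P_{\Lambda_1})$-piece is bounded by $\mathbf{K}_s^{\mathcal{F}}\|x-y\|$ via $\mathcal{F}$-unconditional. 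The pieces $P_{\Lambda_2}(x-y)=P_{\Lambda_2}(x)$ (since $y$ vanishes off $A$) and $P_{A\setminus\Lambda}(y)=P_{A\setminus\Lambda}(x)-P_{A\setminus\Lambda}(x-y)$ I control by the chain: (a) the multiplier version of $\mathcal{F}$-unconditional (derived from the suppression form via heredity) gives $\|P_{A\setminus\Lambda}(x)\|\lesssim\alpha\|\sum_{n\in A\setminus\Lambda}e_n\|$, using $|e_n^*(x)|\leqslant\alpha$ on $A\setminus\Lambda\in\mathcal{F}$; (b) $\mathcal{F}$-disjoint (super)democracy transports this to $\alpha\|\sum_{n\in\Lambda\setminus A}e_n\|$, since $|A\setminus\Lambda|\leqslant|\Lambda\setminus A|$; (c) a quasi-greedy truncation estimate of the form $\alpha\|\sum_{n\in B}\sgn(e_n^*(v))e_n\|\lesssim\|v\|$ (valid for quasi-greedy bases when $|e_n^*(v)|\geqslant\alpha$ on $B$), applied to $v=x-y$ and $B=\Lambda\setminus A$ (using $|e_n^*(x-y)|=|e_n^*(x)|\geqslant\alpha$ on $\Lambda\setminus A$), yields the final bound $\lesssim\|x-y\|$.

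The main obstacle is that $\Lambda$, and in particular $\Lambda\setminus A$, need not belong to $\mathcal{F}$, so $\mathcal{F}$-unconditional is unavailable for projections onto these sets; step (c)---quasi-greedy truncation---is where the argument does the essential work of routing around this obstruction, and pinning down a version of the truncation estimate that applies to an \emph{arbitrary} set $B$ of indices where $|e_n^*(v)|\geqslant\alpha$ (not merely the full level set $\{n:|e_n^*(v)|\geqslant\alpha\}$) will be the key technical point. Two auxiliary facts I would establish first to streamline the chain: the equivalence of $\mathcal{F}$-disjoint democratic and $\mathcal{F}$-disjoint superdemocratic under quasi-greedy plus $\mathcal{F}$-unconditional (needed in step (b) to manage signs), and the multiplier form of $\mathcal{F}$-unconditional from the suppression form by the usual convex-combination argument---valid precisely because hereditariness makes every subset of $A\in\mathcal{F}$ again lie in $\mathcal{F}$. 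Once these are in hand, assembling the chain produces $\mathbf{C}_g^{\mathcal{F}}$ depending only on $\mathbf{C}_w$, $\mathbf{K}_s^{\mathcal{F}}$, and $\mathbf{C}_{d,\sqcup}^{\mathcal{F}}$.
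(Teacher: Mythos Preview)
Your forward direction is fine and essentially matches the paper (the paper routes $\mathcal{F}$-disjoint democracy through an intermediate Property~(A,\,$\mathcal{F}$), but your direct perturbation gives the same conclusion).

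For the converse, your direct Lebesgue-type decomposition is a genuinely different route from the paper. The paper introduces Property~(A,\,$\mathcal{F}$): for $\|x\|_\infty\leqslant 1$, $A\in\mathcal{F}$, $|A|\leqslant |B|$, $A\sqcup B\sqcup\supp(x)$, signs $(\varepsilon_i)$, and $|b_n|\geqslant 1$, one has $\|x+1_{\varepsilon A}\|\leqslant C\|x+\sum_{n\in B}b_ne_n\|$. They first show (Proposition~2.4) that quasi-greedy plus $\mathcal{F}$-disjoint democratic yields Property~(A,\,$\mathcal{F}$), using only the standard quasi-greedy truncation lemma; then $\mathcal{F}$-unconditional plus Property~(A,\,$\mathcal{F}$) gives $\mathcal{F}$-greedy in two clean lines: Property~(A,\,$\mathcal{F}$) gives $\|x-P_\Lambda(x)\|\leqslant C\|x-P_B(x)\|$ directly, and $\mathcal{F}$-unconditionality reinserts the $B$-part. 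This bypasses any need to project onto $\Lambda\setminus A$.

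Your outline has a gap precisely at that projection. After the split you must control three terms, and your chain (a)--(c) as written addresses only $P_{A\setminus\Lambda}(x)$; the piece $P_{\Lambda_2}(x)=P_{\Lambda\setminus A}(x)=P_{\Lambda\setminus A}(x-y)$ is left unhandled. The truncation estimate you cite in~(c), $\alpha\|\sum_{n\in B}\sgn(e_n^*(v))e_n\|\lesssim\|v\|$, bounds the \emph{sign vector} on $\Lambda\setminus A$, not the projection $P_{\Lambda\setminus A}(x-y)$ (whose coefficients are $\geqslant\alpha$ but unbounded above), so it does not close the argument. The fix is short but uses both hypotheses together: set $\beta=\min_{n\in\Lambda\setminus A}|e_n^*(x-y)|\geqslant\alpha$ and $D=\{n:|e_n^*(x-y)|>\beta\}$; then $(\Lambda\setminus A)\cup D$ is a greedy set of $x-y$, so $\|P_{(\Lambda\setminus A)\cup D}(x-y)\|\leqslant \mathbf C_w\|x-y\|$, while $D\setminus(\Lambda\setminus A)\subset A\in\mathcal{F}$ (since off $A$ the coefficients of $x-y$ equal those of $x$ and are $\leqslant\alpha\leqslant\beta$ outside $\Lambda$), so $\|P_{D\setminus(\Lambda\setminus A)}(x-y)\|\leqslant(1+\mathbf K_s^{\mathcal F})\|x-y\|$. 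Subtracting gives the missing bound. With this patch your approach goes through; the paper's Property~(A,\,$\mathcal{F}$) device is what makes the same step automatic.
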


 We also present another characterization regarding $\mathcal{F}$-almost greedy bases.

\begin{defi}\normalfont
A basis $(e_n)$ is $\mathcal{F}$-almost greedy if there exists a constant $C\geqslant 1$ such that for all $x \in X, m\in \mathbb{N}$, and $G_m$, we have
$$\|x-G_m(x)\| \ \leqslant\ C\inf\{\|x-P_A(x)\|\,:\, |A|\leqslant m, A\in \mathcal{F}\}.$$
The least constant $C$ is denoted by $\mathbf{C}_a^{\mathcal{F}}$.
\end{defi}

The next theorem generalizes \cite[Theorem 3.3]{DKKT}. 

\begin{thm}
A basis $(e_n)$  is $\mathcal{F}$-almost greedy if and only if it is quasi-greedy and $\mathcal{F}$-disjoint democratic. 
\end{thm}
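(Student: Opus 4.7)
For the forward direction, quasi-greediness follows immediately by taking $A=\emptyset$ (which lies in $\mathcal{F}$ by heredity), yielding $\|x-G_m(x)\|\leqslant\mathbf{C}_a^{\mathcal{F}}\|x\|$. For $\mathcal{F}$-disjoint democracy, given $A\in\mathcal{F}$ and $B$ disjoint from $A$ with $|A|\leqslant|B|$, I test the defining inequality on $x=\sum_{i\in A}e_i+(1+\varepsilon)\sum_{i\in B}e_i$ with $m=|B|$: the coefficients on $B$ strictly dominate, so the greedy set of order $m$ is $B$, giving $x-G_m(x)=\sum_{i\in A}e_i$; choosing $A$ as the admissible set in the infimum produces $x-P_A(x)=(1+\varepsilon)\sum_{i\in B}e_i$, and sending $\varepsilon\to 0$ yields the disjoint democratic inequality.

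For the reverse direction, assume quasi-greedy and $\mathcal{F}$-disjoint democratic. I first promote $\mathcal{F}$-disjoint democracy to $\mathcal{F}$-disjoint superdemocracy by combining the hypothesis with the sign-change invariance of constant-coefficient sums for quasi-greedy bases. Next, fix $x$, $m$, a greedy set $\Lambda=\Lambda_m(x)$, and $A\in\mathcal{F}$ with $|A|\leqslant m$, and set $D:=A\setminus\Lambda$ and $E:=\Lambda\setminus A$; then $D\in\mathcal{F}$ by heredity, $D\cap E=\emptyset$, and $|D|\leqslant|E|$. The identity
\[ x-P_\Lambda(x)\;=\;(x-P_A(x))+P_D(x)-P_E(x) \]
reduces the theorem to bounding each of $\|P_D(x)\|$ and $\|P_E(x)\|$ by a constant times $\|x-P_A(x)\|$.

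Put $\alpha:=\min_{n\in\Lambda}|e_n^*(x)|$, so $|e_n^*(x)|\leqslant\alpha$ on $D$ and $|e_n^*(x)|\geqslant\alpha$ on $E$. For the $E$-piece, $P_E(x)=P_E(y)$ where $y:=x-P_A(x)$ (since $E\cap A=\emptyset$), and $E$ is a valid greedy set of order $|E|$ for $y$ because all coordinates of $y$ outside $E$ have modulus at most $\alpha$; suppression quasi-greediness then yields $\|P_E(x)\|\leqslant(1+\mathbf{C}_\ell)\|x-P_A(x)\|$. For the $D$-piece, the quasi-greedy truncation estimate bounds $\|P_D(x)\|$ by a constant times $\alpha\,\|\sum_{n\in D}\sgn(e_n^*(x))e_n\|$; $\mathcal{F}$-disjoint superdemocracy (applicable because $D\in\mathcal{F}$, $D\cap E=\emptyset$, and $|D|\leqslant|E|$) replaces this by the analogous sum over $E$, and the dual truncation estimate on $E$, where the coordinates dominate $\alpha$, closes the loop back to $\|P_E(x)\|$. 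The principal obstacle is coordinating three separate tools from the quasi-greedy toolkit, namely sign-change invariance, suppression of large coordinates, and the two truncation inequalities, so that the single constraint $A\in\mathcal{F}$ still forces every auxiliary set that appears (most importantly $D$) to lie in $\mathcal{F}$; this is precisely where heredity of $\mathcal{F}$ is indispensable.
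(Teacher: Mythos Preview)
Your proof is correct, but it follows a different route from the paper. The paper introduces an intermediate notion, \emph{Property (A,\,$\mathcal{F}$)} (Definition~\ref{dAF}), and factors the equivalence through it: Theorem~\ref{m6} shows that $\mathcal{F}$-almost greediness is equivalent to Property (A,\,$\mathcal{F}$) with the \emph{same} constant, and Proposition~\ref{p20} shows that, for quasi-greedy bases, Property (A,\,$\mathcal{F}$) is equivalent to $\mathcal{F}$-disjoint democracy. Your argument instead mirrors the classical Dilworth--Kalton--Kutzarova--Temlyakov proof directly: you write $x-P_\Lambda(x)=(x-P_A(x))+P_D(x)-P_E(x)$ and bound $P_E$ via quasi-greediness applied to $y=x-P_A(x)$, and $P_D$ via convexity, superdemocracy, and the truncation lemma. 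The paper's approach has the advantage of isolating Property (A,\,$\mathcal{F}$) as a reusable tool (it also drives the $\mathcal{F}$-greedy characterization in Theorem~\ref{m1}) and of yielding the sharp constant identity $\mathbf{C}_a^{\mathcal{F}}=\mathbf{C}_b^{\mathcal{F}}$; your approach is more self-contained and avoids introducing an auxiliary definition. One small wording issue: what you call the ``quasi-greedy truncation estimate'' for $\|P_D(x)\|$ is really just convexity of the norm, which gives $\|P_D(x)\|\leqslant\alpha\sup_\varepsilon\|1_{\varepsilon D}\|$ rather than the specific sign $\sgn(e_n^*(x))$; this is harmless since superdemocracy (or Lemma~\ref{bto}(1)) handles arbitrary signs, but it is worth stating precisely.
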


The second set of results in this paper  focuses on the well-known Schreier families $(\mathcal{S}_\alpha)_{n=1}^\infty$ (for each countable ordinal $\alpha$) introduced by Alspach and Argyros \cite{AA2}. The sequence of countable ordinals is
$$0, 1, \ldots, n, \ldots, \omega, \omega + 1, \ldots, 2\omega,\ldots, $$
We recall the definition of $\mathcal{S}_\alpha$. For two sets $A, B\subset\mathbb{N}$, we write
$A < B$ to mean that $a < b$ for all $a\in A, b\in B$. It holds vacuously that $\emptyset < A$ and $\emptyset > A$. Also, $n < A$ for a number $n$ means $\{n\} < A$. 
Let $\mathcal{S}_0$ be the set of singletons and the empty set. Supposing that $\mathcal{S}_{\alpha}$ has be defined for some ordinal $\alpha \geqslant 0$, we define
\begin{equation*}\mathcal{S}_{\alpha+1}\ =\ \{\cup^{m}_{i=1} E_i:m\leqslant E_1<E_2<\cdots<E_m \mbox{ and }  E_i\in \mathcal{S}_\alpha, \forall 1\leqslant i\leqslant m\}.\end{equation*}
If $\alpha$ is a limit ordinal, then fix $\alpha_{m}+1\nearrow \alpha$ with $\mathcal{S}_{\alpha_{m}}\subset \mathcal{S}_{\alpha_{m+1}}$ for all $m\geqslant 1$ and define
\begin{equation*}
    \mathcal{S}_{\alpha} \ =\ \{E\subset \mathbb{N}\,:\, \mbox{ for some }m\geqslant 1, m\leqslant E\in \mathcal{S}_{\alpha_m+1}\}.
\end{equation*}

The following proposition is well-known, but we include its proof for completion.

\begin{prop}\label{k2}
Let $\alpha < \beta$ be two countable ordinals. There exists $N\in \mathbb{N}$ such that 
$$E\backslash \{1, \ldots, N-1\}\in \mathcal{S}_{\beta}, \forall E\in \mathcal{S}_\alpha.$$
\end{prop}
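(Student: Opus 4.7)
The plan is a transfinite induction on $\beta$, supported by two standard preliminaries that are themselves proved by separate transfinite inductions. First, every $\mathcal{S}_\alpha$ is hereditary: the base is trivial; at a successor $\alpha+1$, if $F \subseteq E_1 \cup \cdots \cup E_m$ with $m \leqslant E_1 < \cdots < E_m$ and $E_i \in \mathcal{S}_\alpha$, then $F = \bigcup_{i=1}^m (F \cap E_i)$, and discarding the empty pieces keeps the admissibility condition intact since the number of blocks only shrinks; at a limit, the defining condition transfers directly. Second, $\mathcal{S}_\alpha \subseteq \mathcal{S}_{\alpha+1}$: for any nonempty $E \in \mathcal{S}_\alpha$ the trivial decomposition $m=1$, $E_1 = E$ is admissible because $1 \leqslant \min E$.

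With these in hand, I induct on $\beta$. The base $\beta = 0$ is vacuous. For a successor $\beta = \gamma + 1$ and $\alpha \leqslant \gamma$: if $\alpha < \gamma$, the inductive hypothesis yields $N$ with $E \backslash \{1, \ldots, N-1\} \in \mathcal{S}_\gamma$, and $\mathcal{S}_\gamma \subseteq \mathcal{S}_{\gamma+1}$ by the second preliminary; if $\alpha = \gamma$, take $N = 1$ and use the same preliminary.

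The limit case is the one real point. Fix the sequence $\alpha_k + 1 \nearrow \beta$ specified in the definition of $\mathcal{S}_\beta$, and let $\alpha < \beta$. Choose $k$ with $\alpha < \alpha_k + 1$; the inductive hypothesis applied to $\alpha_k + 1 < \beta$ provides $N_1$ so that $E \backslash \{1, \ldots, N_1 - 1\} \in \mathcal{S}_{\alpha_k + 1}$ for every $E \in \mathcal{S}_\alpha$. Set $N := \max\{N_1, k\}$, and for $E \in \mathcal{S}_\alpha$ put $F := E \backslash \{1, \ldots, N-1\}$. Then $F \subseteq E \backslash \{1, \ldots, N_1 - 1\} \in \mathcal{S}_{\alpha_k + 1}$, so $F \in \mathcal{S}_{\alpha_k + 1}$ by heredity, and $\min F \geqslant N \geqslant k$, whence $F \in \mathcal{S}_\beta$ by the limit definition. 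The main obstacle is really only bookkeeping at this limit stage: the shift condition ``$k \leqslant E$'' in the definition of $\mathcal{S}_\beta$ is what forces the enlargement $N \geqslant k$ beyond what the inductive hypothesis alone supplies.
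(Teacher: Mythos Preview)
Your proof is correct and follows essentially the same transfinite induction on $\beta$ as the paper, with the same split into successor and limit cases and the same choice $N=\max\{N_1,k\}$ at a limit ordinal. You are simply more explicit than the paper about the two preliminaries (heredity of each $\mathcal{S}_\alpha$ and the inclusion $\mathcal{S}_\alpha\subset\mathcal{S}_{\alpha+1}$) and about the case $\alpha=\gamma$ in the successor step, which the paper handles implicitly.
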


\begin{proof}
Fix two ordinals $\alpha < \beta$. 
We prove by induction. Base cases: if $\beta = 0$, there is nothing to prove. If $\beta = 1$, then $\alpha = 0$. Clearly, $\mathcal{S}_0\subset \mathcal{S}_1$. Inductive hypothesis: suppose that the proposition holds for all $\eta < \beta$. If $\beta$ is a successor ordinal, then write $\beta = \gamma + 1$. Since $\alpha < \beta$, we have $\alpha\leqslant \gamma$. By the inductive hypothesis, there exists $N\in\mathbb{N}$ such that 
$$E\backslash \{1,\ldots, N-1\} \in \mathcal{S}_\gamma, \forall E\in \mathcal{S}_\alpha.$$
By definition, $\mathcal{S}_\gamma\subset\mathcal{S}_\beta$. Hence, 
$$E\backslash \{1, \ldots, N-1\}\in \mathcal{S}_\beta, \forall E\in\mathcal{S}_\alpha.$$
If $\beta$ is a limit ordinal, then let $\beta_m\nearrow \beta$. There exists $M\in\mathbb{N}$ such that $\beta_M\geqslant \alpha$. By the inductive hypothesis, there exists $N_1\in\mathbb{N}$ such that
$$E\backslash \{1, \ldots, N_1-1\}\in \mathcal{S}_{\beta_M}, \forall E\in \mathcal{S}_{\alpha}.$$
By definition,
$$E\backslash \{1, \ldots, M-1\}\in \mathcal{S}_{\beta}, \forall E\in \mathcal{S}_{\beta_M}.$$
Therefore,
$$E\backslash \{1, \ldots, \max\{N_1, M\}-1\}\in \mathcal{S}_\beta, \forall E\in \mathcal{S}_{\alpha}.$$
This completes our proof. 
\end{proof}

We have the following corollary, which is proved in Section \ref{pc}.

\begin{cor}\label{m30'}
For two countable ordinals $\alpha < \beta$, an $\mathcal{S}_\beta$-greedy basis is $\mathcal{S}_\alpha$-greedy. 
\end{cor}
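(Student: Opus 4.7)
The plan is to derive the corollary from the characterization given in Theorem \ref{m1'}, which says that being $\mathcal{F}$-greedy is equivalent to being quasi-greedy, $\mathcal{F}$-unconditional, and $\mathcal{F}$-disjoint democratic. So, assuming $(e_n)$ is $\mathcal{S}_\beta$-greedy, I will show that it is quasi-greedy (immediate, since $\mathcal{S}_\beta$-greedy already implies quasi-greedy, being the $\mathcal{F}$ for $\mathcal{F}=\{\emptyset\}$ case), $\mathcal{S}_\alpha$-unconditional, and $\mathcal{S}_\alpha$-disjoint democratic. The bridge in both non-trivial cases is Proposition \ref{k2}: fix $N\in\mathbb{N}$ such that $E\setminus\{1,\dots,N-1\}\in\mathcal{S}_\beta$ for every $E\in\mathcal{S}_\alpha$, and write $F=\{1,\dots,N-1\}$. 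Any $A\in\mathcal{S}_\alpha$ decomposes as $A=(A\cap F)\sqcup(A\setminus F)$ with $A\setminus F\in\mathcal{S}_\beta$ and $|A\cap F|\leqslant N-1$, so the ``head'' $A\cap F$ is a bounded perturbation.

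For $\mathcal{S}_\alpha$-unconditionality, given $x\in X$ and $A\in\mathcal{S}_\alpha$, I use the triangle inequality
\[
\|x-P_A(x)\|\ \leqslant\ \|x-P_{A\setminus F}(x)\|+\|P_{A\cap F}(x)\|.
\]
The first term is at most $\mathbf{K}_s^{\mathcal{S}_\beta}\|x\|$ since $A\setminus F\in\mathcal{S}_\beta$. For the second, the semi-normalization bounds $\|e_n^*(x)e_n\|\leqslant c_2^2\|x\|$, so $\|P_{A\cap F}(x)\|\leqslant(N-1)c_2^2\|x\|$. Combining gives an absolute constant independent of $x$ and $A$.

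For $\mathcal{S}_\alpha$-disjoint democracy, let $A\in\mathcal{S}_\alpha$ and $B$ with $A\cap B=\emptyset$ and $|A|\leqslant|B|$. If $B=\emptyset$ then $A=\emptyset$ and there is nothing to show; otherwise $\|\sum_{i\in B}e_i\|\geqslant 1/c_2$ (apply any $e_j^*$, $j\in B$). Split $A$ as above: $A\setminus F\in\mathcal{S}_\beta$ with $|A\setminus F|\leqslant|A|\leqslant|B|$ and $(A\setminus F)\cap B=\emptyset$, so $\mathcal{S}_\beta$-disjoint democracy yields
\[
\Bigl\|\sum_{i\in A\setminus F}e_i\Bigr\|\ \leqslant\ \mathbf{C}^{\mathcal{S}_\beta}_{d,\sqcup}\Bigl\|\sum_{i\in B}e_i\Bigr\|,
\]
while $\|\sum_{i\in A\cap F}e_i\|\leqslant(N-1)c_2\leqslant(N-1)c_2^2\|\sum_{i\in B}e_i\|$. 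Adding the two and applying the triangle inequality gives $\mathcal{S}_\alpha$-disjoint democracy with constant $\mathbf{C}^{\mathcal{S}_\beta}_{d,\sqcup}+(N-1)c_2^2$.

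The only real obstacle is purely bookkeeping: handling the finitely many coordinates in $F$ uniformly. Once that is absorbed into the constants using the semi-normalization inequalities $c_1\leqslant\|e_n\|,\|e_n^*\|\leqslant c_2$, the proof is just the triangle inequality plus Proposition \ref{k2} plus Theorem \ref{m1'}. I do not expect any delicate estimate beyond this.
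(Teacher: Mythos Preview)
Your proposal is correct and follows essentially the same route as the paper's proof: both use Theorem~\ref{m1'} to reduce to verifying $\mathcal{S}_\alpha$-unconditionality and $\mathcal{S}_\alpha$-disjoint democracy, then invoke Proposition~\ref{k2} to split $A\in\mathcal{S}_\alpha$ into the $\mathcal{S}_\beta$-tail $A\setminus F$ and the bounded head $A\cap F$, handling the latter via the semi-normalization constants. The only cosmetic difference is that you bound $\|1_B\|$ below by $1/c_2$ via a single biorthogonal functional, whereas the paper uses quasi-greediness to get $\|1_B\|\geqslant c_1/C$; both work equally well.
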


Each Schreier family $\mathcal{S}_\alpha$ is obviously hereditary and are moreover spreading and compact (see \cite[pp. 1049 and 1051]{AGR}).
We shall show that each of the following implications cannot be reversed: for two countable ordinals $\alpha < \beta$, 
$$\mbox{quasi-greedy}\ \Longleftarrow\ \mathcal{S}_\alpha\mbox{-greedy}\ \Longleftarrow\ \mathcal{S}_\beta\mbox{-greedy}\ \Longleftarrow\ \mbox{greedy}.$$ 
We, thereby, study the greedy counterpart of the notion of $\mathcal{S}_\alpha$-unconditionality. 

\begin{thm}\label{m30}
For two countable ordinals $\alpha < \beta$, there exists a Banach space $X$ with an $\mathcal{S}_\alpha$-greedy basis that is not $\mathcal{S}_{\beta}$-greedy. 
\end{thm}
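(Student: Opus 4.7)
By Corollary \ref{m30'}, it suffices to handle the successor case $\beta=\alpha+1$: any basis that is $\mathcal{S}_\alpha$-greedy but not $\mathcal{S}_{\alpha+1}$-greedy is automatically not $\mathcal{S}_\beta$-greedy for every $\beta\geqslant\alpha+1$ by contraposition of that corollary. Theorem \ref{m1'} then reframes the goal as building a basis $(e_n)$ that is quasi-greedy, $\mathcal{S}_\alpha$-unconditional, and $\mathcal{S}_\alpha$-disjoint democratic, yet fails either $\mathcal{S}_{\alpha+1}$-unconditionality or $\mathcal{S}_{\alpha+1}$-disjoint democracy.

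For the construction, the natural approach is to endow $c_{00}$ with a hybrid norm of the shape
\[
\|x\|\ :=\ \max\left(\sup_{E\in\mathcal{S}_\alpha}\|P_E x\|_1,\ \sup_{F\in\mathcal{G}}\left|\sum_{i\in F}\epsilon^F_i x_i\right|\right),
\]
where $\mathcal{G}\subset\mathcal{S}_{\alpha+1}$ is a carefully chosen family of sets lying outside $\mathcal{S}_\alpha$ and each $\epsilon^F=(\epsilon^F_i)_{i\in F}$ is a fixed sign pattern (e.g., alternating). The first summand makes $(e_n)$ $1$-unconditional and $1$-democratic when restricted to $\mathcal{S}_\alpha$-supported vectors, immediately yielding $\mathcal{S}_\alpha$-unconditionality and $\mathcal{S}_\alpha$-disjoint democracy. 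The failure of $\mathcal{S}_{\alpha+1}$-unconditionality is then exhibited by choosing an $F\in\mathcal{G}$ of large cardinality, the vector $x=\sum_{i\in F}e_i$, and the set $A=\{i\in F:\epsilon^F_i=+1\}$: the sum $\sum_{i\in F}\epsilon^F_i$ is at most $1$ in absolute value by cancellation, while $x-P_A(x)=\sum_{i\in F\setminus A}e_i$ produces a conditional-functional value of order $|F|/2$, so $\|x-P_A x\|/\|x\|$ becomes unbounded as $|F|$ grows, provided the $\mathcal{S}_\alpha$-summand of $\|x\|$ stays of lower order than $|F|$ (which holds since $F\notin\mathcal{S}_\alpha$, so no single $E\in\mathcal{S}_\alpha$ captures all of $F$). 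By Theorem \ref{m1'}, $(e_n)$ is then not $\mathcal{S}_{\alpha+1}$-greedy.

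The hard part is verifying quasi-greediness of $(e_n)$. Greedy operators $G_m$ are nonlinear, and the residual $x-G_m(x)$ can have sign patterns interacting badly with the $\epsilon^F$: even if $\sum_{i\in F}\epsilon^F_i x_i$ is small, the same sum applied to $G_m x$ can a priori be large. Controlling this typically requires attenuating the conditional functionals by a factor decreasing in $|F|$ (or in the complexity of $F$), and tuning $\mathcal{G}$ so that (a) the conditional norm still dominates the unconditional one on the test vectors above, preserving the failure of $\mathcal{S}_{\alpha+1}$-unconditionality, while (b) greedy truncation cannot amplify the conditional functional beyond a uniform constant on any input. Balancing these two requirements, and inductively defining $\mathcal{G}$ from the recursive structure of $\mathcal{S}_{\alpha+1}$ (with the control at limit stages mediated by Proposition \ref{k2}), is the technical heart of the construction.
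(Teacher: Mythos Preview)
Your high-level strategy is sound and matches the paper: reduce to $\beta=\alpha+1$ via Corollary~\ref{m30'}, then use Theorem~\ref{m1'} to recast the problem as building a quasi-greedy, $\mathcal{S}_\alpha$-unconditional, $\mathcal{S}_\alpha$-disjoint democratic basis that fails $\mathcal{S}_{\alpha+1}$-unconditionality. However, the concrete construction you propose has a genuine gap in the step where you claim failure of $\mathcal{S}_{\alpha+1}$-unconditionality.

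You test with $x=1_F$ for $F\in\mathcal{G}\subset\mathcal{S}_{\alpha+1}$ and assert that the $\mathcal{S}_\alpha$-summand $\sup_{E\in\mathcal{S}_\alpha}\|P_E(1_F)\|_1=\sup_{E\in\mathcal{S}_\alpha}|E\cap F|$ is of lower order than $|F|$ ``since $F\notin\mathcal{S}_\alpha$.'' This is false for $\alpha\geqslant 1$. Enumerating $F=\{f_1<\cdots<f_n\}$, the tail $\{f_{\lceil n/2\rceil+1},\ldots,f_n\}$ has $\lfloor n/2\rfloor$ elements and minimum at least $\lceil n/2\rceil+1$, hence lies in $\mathcal{S}_1$; by Proposition~\ref{k2} (after discarding a bounded initial segment) it lies in $\mathcal{S}_\alpha$. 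Thus $\sup_{E\in\mathcal{S}_\alpha}|E\cap F|\geqslant c|F|$ for a universal $c>0$, so $\|1_F\|\asymp |F|$ and the ratio $\|1_F-P_A(1_F)\|/\|1_F\|$ stays bounded. Characteristic functions cannot witness the failure; one needs \emph{weighted} vectors whose $\mathcal{S}_\alpha$-norm is genuinely small relative to their $\ell_1$-norm, which is precisely what the repeated averages hierarchy provides (Lemma~\ref{RAH} in the paper).

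The paper's construction is accordingly quite different from yours. It takes $X_{\alpha,\infty}$ to be the Jamesification of the combinatorial space $X[\mathcal{F}_\alpha]$, with norm
\[
\|(a_i)\|=\sup\Bigl\{\sum_{j=1}^d\Bigl|\sum_{i\in I_j}a_i\Bigr|: I_1<\cdots<I_d\text{ intervals},\ (\min I_j)_{j=1}^d\in\mathcal{F}_\alpha\Bigr\}.
\]
The interval structure is what makes $1$-suppression quasi-greediness provable directly (Proposition~\ref{pp1}): removing a maximal coordinate can be compensated by splitting one interval into three at that coordinate, and admissibility of the new family is recovered by shifting $\min I_1$. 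The failure of $\mathcal{S}_{\alpha+1}$-unconditionality is then exhibited not by $1_F$ but by a repeated-average vector $\sum_k a_k^\alpha e_k$ supported on a maximal $\mathcal{S}_{\alpha+1}$-interval, whose alternating-sign version has small norm because $\sum_{k\in G}a_k^\alpha<\varepsilon$ for every $G\in\mathcal{S}_\alpha$. Both ingredients---the Jamesification (for quasi-greediness) and the repeated averages (for the test vectors)---are essential, and neither is present in your proposal.
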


\begin{thm}\label{m31}
Fix a countable ordinal $\alpha$. 
\begin{enumerate}
    \item A basis is greedy if and only if it is $C$-$\mathcal{S}_{\alpha+m}$-greedy for all $m\in\mathbb{N}$ and some uniform $C\geqslant 1$.
    \item There exists a basis that is $\mathcal{S}_{\alpha+m}$-greedy (with different constants) for all $m\in \mathbb{N}$ but is not greedy. 
\end{enumerate}
\end{thm}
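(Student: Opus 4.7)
The plan is to prove the two statements separately.

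For the forward direction of (1): if $(e_n)$ is $C$-greedy, then $\sigma_m(x) \leq \sigma_m^{\mathcal{F}}(x)$ for any hereditary $\mathcal{F}$ (since the Schreier-restricted infimum is taken over a smaller set), and in particular for $\mathcal{F} = \mathcal{S}_{\alpha+m}$ for every $m$, yielding uniform $C$-$\mathcal{S}_{\alpha+m}$-greediness with the same constant $C$.

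For the converse of (1), suppose the basis is uniformly $C$-$\mathcal{S}_{\alpha+m}$-greedy. Since $\emptyset \in \mathcal{S}_{\alpha+m}$ we obtain $\sigma_m^{\mathcal{S}_{\alpha+m}}(x) \leq \|x\|$ and thus $C$-suppression quasi-greediness. Applying Theorem \ref{m1'} for each fixed $m$ yields $\mathcal{S}_{\alpha+m}$-unconditionality and $\mathcal{S}_{\alpha+m}$-disjoint democracy with constants depending only on $C$ and the quasi-greedy constant, hence uniform in $m$. By the Konyagin--Temlyakov characterization (the $\mathcal{F} = \mathcal{P}(\mathbb{N})$ case of Theorem \ref{m1'}), to conclude greediness it suffices to upgrade these Schreier-restricted properties to full unconditionality and democracy. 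The key decomposition is $A = A_1 \sqcup A_2$ with $A_2 = A \cap (|A|, \infty)$, which lies in $\mathcal{S}_1 \subseteq \mathcal{S}_{\alpha+m}$ for sufficiently large $m$ because $|A_2| \leq |A| < \min A_2$; the uniform $\mathcal{S}_{\alpha+m}$-properties then control the tail contribution, and the head $A_1 \subseteq [1,|A|]$ is handled by iterating the decomposition on successively smaller subsets, invoking the quasi-greedy bound together with the uniform disjoint democracy at each step.

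For (2), I construct $X = (\bigoplus_m Y_m)_{\ell_2}$, where each $Y_m$ is provided by Theorem \ref{m30} as a space whose basis is $\mathcal{S}_{\alpha+m}$-greedy but not $\mathcal{S}_{\alpha+m+1}$-greedy. The component bases are interleaved so that the $n$-th basis vector of $Y_m$ occupies an index in the combined enumeration that grows rapidly with $m$. For each $k$, this placement forces any Schreier-$\alpha+k$ set of the combined basis to intersect each $Y_m$ with $m < k$ in only boundedly many elements, controllable by the Schauder constant; combined with $\mathcal{S}_{\alpha+k}$-greediness of each $Y_m$ with $m \geq k$ (inherited from $\mathcal{S}_{\alpha+m}$-greediness via Corollary \ref{m30'}), the combined basis is $\mathcal{S}_{\alpha+k}$-greedy with a constant that may depend on $k$. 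On the other hand, the witnesses of non-$\mathcal{S}_{\alpha+m+1}$-greediness in the $Y_m$ embed into $X$ and force greedy constants to exceed any fixed value as $m \to \infty$, so $X$ is not greedy.

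The main obstacle is the converse of (1), specifically the handling of the head $A_1$ in the decomposition: $|A_1|$ can be as large as $|A|$, and naive triangle-inequality projection bounds grow with $|A|$. Overcoming this will require balancing the uniform quasi-greedy constant against the uniform $\mathcal{S}_{\alpha+m}$-disjoint democratic constant through a carefully iterated (or Lebesgue-type) argument, exploiting the spreading and compactness of the Schreier families so that the recursion contracts and the final estimate is independent of $|A|$.
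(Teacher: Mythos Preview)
Your forward direction of (1) is correct and slightly cleaner than the paper's (which routes through the Konyagin--Temlyakov characterization); the observation $\sigma_m(x)\leqslant\sigma_m^{\mathcal F}(x)$ is all that is needed.

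For the converse of (1) you have a genuine gap, and the obstacle you identify is illusory. You correctly allow $m$ to depend on the set $A$ when you place $A_2=A\cap(|A|,\infty)$ into $\mathcal S_1\subset\mathcal S_{\alpha+m}$, but you then forget that the \emph{same} freedom lets you swallow the entire set in one step. The key fact (Lemma~\ref{l40} in the paper) is that every finite $S\subset\mathbb N$ with $\min S\geqslant 2$ lies in $\mathcal S_{\alpha+m}$ for some $m$ depending on $S$: inductively, any such $S$ is in some $\mathcal S_k$ (finite ordinal), and $\mathcal S_k\subset\mathcal S_{\alpha+m}$ once $m$ is large enough by Proposition~\ref{k2}. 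Hence for any finite $A$, choose $m$ so that $A\setminus\{1\}\in\mathcal S_{\alpha+m}$; uniform $C$-$\mathcal S_{\alpha+m}$-suppression unconditionality then gives $\|P_{A\setminus\{1\}}(x)\|\leqslant(C+1)\|x\|$ with a constant independent of $A$, and the singleton $\{1\}$ is absorbed by $c_2^2$. The same trick handles democracy. No iteration, no Lebesgue-type argument, and no contraction estimate is needed; your proposed recursion on $A_1$ would indeed produce constants growing with $|A|$ and should be abandoned.

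For (2) your direct-sum construction is substantially more delicate than necessary and leaves real issues unresolved (uniform quasi-greedy constants across the $Y_m$, comparable fundamental functions so that the $\ell_2$-sum is democratic, and the claim that an $\mathcal S_{\alpha+k}$-set meets only boundedly many indices in each $Y_m$ with $m<k$ after your interleaving). The paper avoids all of this: take $\beta$ to be the least limit ordinal above every $\alpha+m$ and use the single space $X_{\beta,\infty}$ of Theorem~\ref{minfinity}. Its canonical basis is $\mathcal S_\beta$-greedy, hence $\mathcal S_{\alpha+m}$-greedy for every $m$ by Corollary~\ref{m30'}, yet it is conditional and therefore not greedy.
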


\section{Characterizations of $\mathcal{F}$-greedy bases}

In this section, we prove Theorem \ref{m1'} and other characterizations of $\mathcal{F}$-greedy bases. Throughout, $\mathcal{F}$ will be a hereditary family of finite subsets of $\mathbb{N}$. We first need to define Property (A, $\mathcal{F}$), inspired by the classical Property (A) introduced by Albiac and Wojtaszczyk in \cite{AW}. Write $\sqcup_{i\in I} A_i$, for some index set $I$ and sets $(A_i)_{i\in I}$, to mean that the $A_i$'s are pairwise disjoint. Define $1_A = \sum_{n\in A}e_n\mbox{ and }1_{\varepsilon A} = \sum_{n\in A}\varepsilon_n e_n$, for some signs $(\varepsilon) = (\varepsilon_n)_n\in \mathbb{K}^{\mathbb{N}}$.

\begin{defi}\label{dAF}\normalfont
A basis $(e_n)$ is said to have Property (A, $\mathcal{F}$) if there exists a constant $C\geqslant 1$ such that
$$\left\|x+\sum_{i \in A} \varepsilon_i e_i\right\|\ \leqslant\   C\left\|x + \sum_{n\in B}b_n e_n\right\|,$$
for all $x\in X$ with $\|x\|_\infty\leqslant 1$, for all finite sets $A, B\subset\mathbb{N}$ with $|A|\leqslant |B|$, $A\in \mathcal{F}$, $A\sqcup B\sqcup \supp(x)$, and for all signs $(\varepsilon_i)$ and $|b_n|\geqslant 1$. The least constant $C$ is denoted by $\mathbf C^{\mathcal{F}}_b$.
\end{defi}

\begin{prop}\label{p1}
A basis $(e_n)$ has $\mathbf C^{\mathcal{F}}_{b}$-Property (A, $\mathcal{F}$) if and only if 
\begin{equation}\label{e1}\|x\|\ \leqslant\ \mathbf C^{\mathcal{F}}_b \left\|x-P_A(x) + \sum_{n\in B}b_n e_n\right\|,\end{equation}
for all
$x\in X$ with $\|x\|_\infty\leqslant 1$, for all finite sets $A, B\subset\mathbb{N}$ with $|A|\leqslant |B|$, $A\in \mathcal{F}$, $B\cap (A\cup \supp(x)) = \emptyset$, and $|b_n|\geqslant 1$.
\end{prop}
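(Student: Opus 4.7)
The plan is to prove the two directions of the equivalence separately, using the standard "convexification" trick that represents a vector with $\ell_\infty$-norm at most $1$ as a convex combination of sign-sums on its support.

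\textbf{Forward direction (Property (A,$\mathcal{F}$) $\Rightarrow$ \eqref{e1}).} Fix $x$ with $\|x\|_\infty\leqslant 1$ and finite sets $A,B$ satisfying the hypotheses of \eqref{e1}. Let $y:=x-P_A(x)$. Then $\supp(y)\subset \supp(x)\setminus A$ is disjoint from $A\cup B$, and $\|y\|_\infty\leqslant\|x\|_\infty\leqslant 1$. Write $P_A(x)=\sum_{i\in A}c_i e_i$ with $|c_i|\leqslant 1$. I would express $P_A(x)$ as a convex combination of sums of the form $\sum_{i\in A}\varepsilon_i e_i$, $(\varepsilon_i)$ signs. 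In the real case, the weights $\lambda_\varepsilon=\prod_{i\in A}\frac{1+\varepsilon(i)c_i}{2}$ are nonnegative, sum to one, and $\sum_\varepsilon \lambda_\varepsilon \varepsilon(i)=c_i$; in the complex case it suffices to write each $c_i=\tfrac12(w_i^++w_i^-)$ with $|w_i^\pm|=1$, giving a $2$-term convex combination. Consequently $x=y+P_A(x)$ is a convex combination of the vectors $y+\sum_{i\in A}\varepsilon_i e_i$. Each such vector has $y$ in the role of the Definition~\ref{dAF} ambient vector (the disjointness and $\|\cdot\|_\infty\leqslant 1$ hypotheses hold), so Property (A,$\mathcal{F}$) gives
\[
\Bigl\|y+\sum_{i\in A}\varepsilon_i e_i\Bigr\|\ \leqslant\ \mathbf{C}_b^{\mathcal{F}}\Bigl\|y+\sum_{n\in B}b_n e_n\Bigr\|.
\]
Applying the triangle inequality to the convex decomposition and noting $y+\sum_{n\in B}b_n e_n=x-P_A(x)+\sum_{n\in B}b_n e_n$ yields \eqref{e1}.

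\textbf{Reverse direction (\eqref{e1} $\Rightarrow$ Property (A,$\mathcal{F}$)).} Given $x$ and data as in Definition~\ref{dAF}, let $z:=x+\sum_{i\in A}\varepsilon_i e_i$. Because $A\sqcup\supp(x)$ and $|\varepsilon_i|\leqslant 1$, we have $\|z\|_\infty\leqslant 1$, and clearly $P_A(z)=\sum_{i\in A}\varepsilon_i e_i$, so $z-P_A(z)=x$. The disjointness condition $A\sqcup B\sqcup\supp(x)$ gives $B\cap(A\cup\supp(z))=\emptyset$, so \eqref{e1} applies to $z$ with the same $A,B,b_n$, producing
\[
\Bigl\|x+\sum_{i\in A}\varepsilon_i e_i\Bigr\|=\|z\|\ \leqslant\ \mathbf{C}_b^{\mathcal{F}}\Bigl\|x+\sum_{n\in B}b_n e_n\Bigr\|,
\]
which is precisely Property (A,$\mathcal{F}$).

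\textbf{Main obstacle.} The substantive step is the convex-combination representation of $P_A(x)$ in the forward direction; aside from verifying that the inherited vector $y$ still satisfies the disjointness and $\|\cdot\|_\infty\leqslant 1$ hypotheses of Definition~\ref{dAF}, everything else is bookkeeping. Handling real versus complex scalars uniformly requires the small care indicated above, but no new ideas.
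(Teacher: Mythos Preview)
Your proof is correct and follows essentially the same route as the paper's: both directions use the same substitution (setting $z=x+1_{\varepsilon A}$, respectively $y=x-P_A(x)$), and the only nontrivial step in the forward direction is the convexity argument, which the paper handles in one line by writing $\|x\|\leqslant \sup_{(\delta)}\|x-P_A(x)+1_{\delta A}\|$ ``by norm convexity'' rather than spelling out explicit weights. Your treatment of the complex case is fine in spirit, though note that combining the per-coordinate two-term decompositions yields a $2^{|A|}$-term convex combination, not a two-term one.
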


\begin{proof}
Assume \eqref{e1}. Let $x, A, B, (\varepsilon), (b_n)_{n\in B}$ be as in Definition \ref{dAF}. Let $y = x + 1_{\varepsilon A}$. By \eqref{e1}, 
$$\|x+1_{\varepsilon A}\|\ =\ \|y\|\ \leqslant\ \mathbf C^{\mathcal{F}}_b\left\|y- P_A(y) + \sum_{n\in B}b_n e_n\right\|\ =\ \mathbf C^{\mathcal{F}}_b \left\|x+\sum_{n\in B}b_ne_n\right\|.$$

Conversely, assume that $(e_n)$ has $\mathbf C^{\mathcal{F}}_b$-Property (A, $\mathcal{F}$). Let $x, A, B, (b_n)_{n\in B}$ be as in \eqref{e1}. We have
\begin{align*}
    \|x\|\ =\ \left\|x-P_A(x) + \sum_{n\in A}e_n^*(x)e_n\right\|&\ \leqslant\ \sup_{(\delta)}\left\|x-P_A(x) + 1_{\delta A}\right\|\mbox{ by norm convexity}\\
    &\ \leqslant\ \mathbf C^{\mathcal{F}}_b\left\|x - P_A(x) + \sum_{n\in B}b_ne_n\right\|,
\end{align*}
where the last inequality is due to Property (A, $\mathcal{F}$).
\end{proof}

\begin{thm}\label{m1}
Let $(e_n)$ be a basis for a Banach space $X$.
\begin{enumerate}
\item The basis $(e_n)$ is $\mathbf C^{\mathcal{F}}_g$-$\mathcal{F}$-greedy, then $(e_n)$ is $\mathbf C^{\mathcal{F}}_g$-$\mathcal{F}$-suppression unconditional and has $\mathbf C^{\mathcal{F}}_g$-Property (A, $\mathcal{F}$).
\item The basis $(e_n)$ is $\mathbf K^{\mathcal{F}}_s$-$\mathcal{F}$-suppression unconditional and has $\mathbf C^{\mathcal{F}}_b$-Property (A, $\mathcal{F}$), then $(e_n)$ is $\mathbf K^{\mathcal{F}}_s\mathbf C^{\mathcal{F}}_b$-$\mathcal{F}$-greedy. 
\end{enumerate}
\end{thm}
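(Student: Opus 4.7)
The plan is to follow the template of the classical Konyagin--Temlyakov characterization, with Property (A, $\mathcal{F}$), in the form supplied by Proposition \ref{p1}, replacing (super)democracy. Part (1) extracts $\mathcal{F}$-suppression unconditionality and Property (A, $\mathcal{F}$) from the defining $\mathcal{F}$-greedy inequality by two perturbation arguments whose test vectors have greedy set and $\mathcal{F}$-approximant pinned down by construction; part (2) combines the two hypotheses via Proposition \ref{p1}.

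For $\mathcal{F}$-suppression unconditionality in part (1), fix $A \in \mathcal{F}$ and a nonzero $x \in X$, choose $M > \|x\|_\infty$, and set $y = (I - P_A)(x) + M \cdot 1_A$. Every coordinate of $y$ on $A$ has modulus $M$ and every coordinate outside has strictly smaller modulus, so $A$ is a valid greedy set of order $|A|$ for $y$ and $y - G_{|A|}(y) = (I - P_A)(x)$. The sum $\sum_{n \in A}(M - e_n^*(x)) e_n$ is an admissible $\mathcal{F}$-approximant (supported on $A \in \mathcal{F}$, of size $|A|$) and $y - \sum_{n \in A}(M - e_n^*(x)) e_n = x$, so $\sigma_{|A|}^{\mathcal{F}}(y) \leqslant \|x\|$ and the $\mathcal{F}$-greedy inequality yields $\|(I - P_A)(x)\| \leqslant \mathbf C^{\mathcal{F}}_g \|x\|$. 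For Property (A, $\mathcal{F}$), given the data of Definition \ref{dAF}, put $y = x + 1_{\varepsilon A} + \sum_{n \in B} b_n e_n$; since $|b_n| \geqslant 1$ while every other coordinate of $y$ has modulus at most $1$, the set $B$ is a valid greedy set of order $|B|$, so $y - G_{|B|}(y) = x + 1_{\varepsilon A}$. Using $1_{\varepsilon A}$ as the $\mathcal{F}$-approximant (admissible since $A \in \mathcal{F}$ and $|A| \leqslant |B|$) and applying the $\mathcal{F}$-greedy inequality closes the case.

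For part (2), fix $x$, $m$, a greedy set $\Lambda$ of order $m$, $A \in \mathcal{F}$ with $|A| \leqslant m$, and $(a_n) \subset \mathbb{K}$; set $z = x - \sum_{n \in A} a_n e_n$, $w = (I - P_\Lambda)(x)$, and $t = \min_{n \in \Lambda} |e_n^*(x)|$. The case $t = 0$ forces $x = G_m(x)$ and is trivial, so $\|w/t\|_\infty \leqslant 1$. Let $A' = A \setminus \Lambda$, which lies in $\mathcal{F}$ by heredity, and $B' = \Lambda \setminus A$; the identities $|A| = |A \cap \Lambda| + |A'|$ and $|\Lambda| = |A \cap \Lambda| + |B'|$ together with $|A| \leqslant m = |\Lambda|$ give $|A'| \leqslant |B'|$, and the three sets $A'$, $B'$, $\supp(w/t)$ are pairwise disjoint by construction. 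Apply Proposition \ref{p1} to $w/t$ with sets $A'$, $B'$ and coefficients $e_n^*(x)/t$ on $B'$ (each of modulus at least $1$); after clearing the factor $1/t$ and using the elementary identity $(\mathbb{N} \setminus (\Lambda \cup A)) \sqcup B' = \mathbb{N} \setminus A$, the right hand side telescopes to $\mathbf C^{\mathcal{F}}_b \|(I - P_A)(x)\|$. Because $\sum_{n \in A} a_n e_n$ is supported in $A$, we have $(I - P_A)(x) = (I - P_A)(z)$, and $\mathcal{F}$-suppression unconditionality yields $\|(I - P_A)(z)\| \leqslant \mathbf K^{\mathcal{F}}_s \|z\|$. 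Taking the infimum over $A$ and $(a_n)$ gives $\|x - G_m(x)\| = \|w\| \leqslant \mathbf K^{\mathcal{F}}_s \mathbf C^{\mathcal{F}}_b \sigma_m^{\mathcal{F}}(x)$.

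The main obstacle is the bookkeeping around the set $A'$ in part (2): one must see simultaneously that $A' \in \mathcal{F}$ (this is where heredity of $\mathcal{F}$ enters), that $|A'| \leqslant |B'|$ and the relevant disjointness hold, and that the right hand side of Proposition \ref{p1} telescopes through the identity $(\mathbb{N} \setminus (\Lambda \cup A)) \sqcup B' = \mathbb{N} \setminus A$ to $(I - P_A)(x)$. The argument crucially avoids using $\Lambda \in \mathcal{F}$, which in general fails, and that is what makes the $\mathcal{F}$-suppression unconditional hypothesis (rather than full suppression unconditionality) suffice.
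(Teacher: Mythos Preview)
Your proof is correct and follows essentially the same approach as the paper's: the same perturbation vectors in part (1), and in part (2) the same application of Proposition~\ref{p1} with the sets $A\setminus\Lambda\in\mathcal{F}$ and $\Lambda\setminus A$ followed by $\mathcal{F}$-suppression unconditionality applied to $x-\sum_{n\in A}a_ne_n$. The only cosmetic differences are that the paper's degenerate case in (2) is ``greedy set $=\mathcal{F}$-set'' rather than your $t=0$, and that you make the normalization by $t$ explicit while the paper applies Proposition~\ref{p1} directly (relying on homogeneity).
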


\begin{proof}
(1) Assume that $(e_n)$ is $\mathbf C^{\mathcal{F}}_g$-$\mathcal{F}$-greedy. We shall show that $(e_n)$ is $\mathcal{F}$-unconditional. Choose $x\in X$ and a finite set $B\in \mathcal{F}$. Set
$$y\ :=\ \sum_{n\in B}(e_n^*(x)+\alpha)e_n + \sum_{n\notin B} e_n^*(x)e_n,$$
where $\alpha$ is sufficiently large such that $B$ is a greedy set of $y$. Then 
$$\|x-P_B(x)\|\ =\ \|y-P_B(y)\|\ \leqslant\ \mathbf C_g^{\mathcal{F}}\sigma^{\mathcal{F}}_{|B|}(y)\ \leqslant\ \mathbf C_g^{\mathcal{F}}\|y-\alpha 1_B\|\ =\ \mathbf C_g^{\mathcal{F}}\|x\|.$$
Hence, $(e_n)$ is $\mathbf C^{\mathcal{F}}_g$-$\mathcal{F}$-suppression unconditional.

Next, we prove Property (A, $\mathcal{F}$). Choose $x, A, B, (\varepsilon_i), (b_n)_{n\in B}$ as in Definition \ref{dAF}. Set 
$y:= x + 1_{\varepsilon A} + \sum_{n\in B}b_ne_n$. Since $B$ is a greedy set of $y$, we have
$$\|x+1_{\varepsilon A}\|\ =\ \|y-P_B(y)\|\ \leqslant\ \mathbf C_g^{\mathcal{F}}\sigma^{\mathcal{F}}_{|B|}(y)\ \leqslant\ \mathbf C_g^{\mathcal{F}}\|y-P_A(y)\|\ =\ \mathbf C_g^{\mathcal{F}}\left\|x+\sum_{n\in B}b_ne_n\right\|.$$
Therefore, $(e_n)$ has $\mathbf C_g^{\mathcal{F}}$-Property (A, $\mathcal{F}$). 

(2)  Assume that $(e_n)$ is $\mathbf K^{\mathcal{F}}_s$-$\mathcal{F}$-unconditional and has $\mathbf C^{\mathcal{F}}_b$-Property (A, $\mathcal{F}$). Let $x\in X$ with a greedy set $A$. Choose $B\in \mathcal{F}$ with $|B|\leqslant |A|$ and choose $(b_n)_{n\in B}\subset \mathbb{K}$. If $A\backslash B = \emptyset$, then $A = B$ and we have
\begin{align*}\|x-P_A(x)\|\ =\ \|x-P_B(x)\|&\ \leqslant\ \mathbf K^{\mathcal{F}}_s\left\|x-P_B(x) + \sum_{n\in B}(e_n^*(x)-b_n)e_n\right\|\\
&\ =\ \mathbf K^{\mathcal{F}}_s\left\|x - \sum_{n\in B}b_ne_n\right\|.
\end{align*}
Assume that $A\backslash B\neq \emptyset$.
Note that $B\backslash A\in \mathcal{F}$ as $\mathcal{F}$ is hereditary and $\min_{n\in A\backslash B}|e_n^*(x)|\geqslant \|x-P_A(x)\|_\infty$.
By Proposition \ref{p1}, we have
\begin{align*}
    \|x-P_A(x)\|&\ \leqslant\ \mathbf C^{\mathcal{F}}_b\|(x-P_A(x)) - P_{B\backslash A}(x) + P_{A\backslash B}(x)\|\\
    &\ =\ \mathbf C^{\mathcal{F}}_b\|x-P_B(x)\|\\
    &\ \leqslant\ \mathbf C^{\mathcal{F}}_b\mathbf K^{\mathcal{F}}_s\left\|x-P_B(x) + \sum_{n\in B}(e_n^*(x)-b_n)e_n\right\|\\
    &\ =\ \mathbf C^{\mathcal{F}}_b\mathbf K^{\mathcal{F}}_s\left\|x-\sum_{n\in B}b_ne_n\right\|.
\end{align*}
Since $B$ and $(b_n)$ are arbitrary, we know that $(e_n)$ is $\mathbf C^{\mathcal{F}}_b\mathbf K^{\mathcal{F}}_s$-$\mathcal{F}$-greedy. 
\end{proof}

We have the following immediate corollary.

\begin{cor}
A basis $(e_n)$ is $1$-$\mathcal{F}$-greedy if and only if it is $1$-$\mathcal{F}$-unconditional and has $1$-Property (A, $\mathcal{F}$).
\end{cor}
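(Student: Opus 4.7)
The plan is to derive both directions of the corollary directly from Theorem \ref{m1}, since each of the constants $\mathbf C^{\mathcal{F}}_g$, $\mathbf K^{\mathcal{F}}_s$, $\mathbf C^{\mathcal{F}}_b$ is, by its very definition as a least constant in an inequality of the form $\|\cdot\| \leqslant C\|\cdot\|$, automatically at least $1$. So the only thing to verify is that the two parts of Theorem \ref{m1} propagate the constant $1$ in both directions without any loss.

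For the forward direction, I would assume $(e_n)$ is $1$-$\mathcal{F}$-greedy, which means $\mathbf C^{\mathcal{F}}_g \leqslant 1$. Part (1) of Theorem \ref{m1} yields $\mathbf K^{\mathcal{F}}_s \leqslant \mathbf C^{\mathcal{F}}_g \leqslant 1$ and $\mathbf C^{\mathcal{F}}_b \leqslant \mathbf C^{\mathcal{F}}_g \leqslant 1$. Since both of these constants are always $\geqslant 1$, they must equal $1$, so $(e_n)$ is $1$-$\mathcal{F}$-suppression unconditional and has $1$-Property (A, $\mathcal{F}$).

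For the reverse direction, if $(e_n)$ is $1$-$\mathcal{F}$-unconditional with $1$-Property (A, $\mathcal{F}$), then $\mathbf K^{\mathcal{F}}_s = \mathbf C^{\mathcal{F}}_b = 1$. Part (2) of Theorem \ref{m1} then gives $\mathbf C^{\mathcal{F}}_g \leqslant \mathbf K^{\mathcal{F}}_s\,\mathbf C^{\mathcal{F}}_b = 1$, so $(e_n)$ is $1$-$\mathcal{F}$-greedy. There is no real obstacle here; the only thing worth remarking on is the observation that the product structure of the constant in Theorem \ref{m1}(2), combined with the lower bound $1$ on each factor, is precisely what forces optimal constant $1$ to be preserved.
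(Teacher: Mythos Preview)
Your proposal is correct and is exactly the argument the paper has in mind: the corollary is stated immediately after Theorem \ref{m1} with the remark that it is an ``immediate corollary,'' and your write-up simply spells out why the constants in both parts of Theorem \ref{m1} specialize cleanly to $1$.
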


The next proposition connects Property $(A,\mathcal{F})$ and $\mathcal{F}$-disjoint democracy.

\begin{prop}\label{p20}
Let $(e_n)$ be a quasi-greedy basis. Then $(e_n)$ has Property (A, $\mathcal{F}$) if and only if $(e_n)$ is $\mathcal{F}$-disjoint democratic. 
\end{prop}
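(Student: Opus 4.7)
For the forward direction I would specialize Definition~\ref{dAF} to $x=0$, $\varepsilon_i=1$ for $i\in A$, and $b_n=1$ for $n\in B$. Property (A, $\mathcal{F}$) collapses to $\|1_A\|\leqslant\mathbf{C}^{\mathcal{F}}_b\|1_B\|$ for all disjoint $A,B\subset\mathbb{N}$ with $A\in\mathcal{F}$ and $|A|\leqslant|B|$, which is exactly $\mathcal{F}$-disjoint democracy with $\mathbf{C}^{\mathcal{F}}_{d,\sqcup}\leqslant\mathbf{C}^{\mathcal{F}}_b$; quasi-greediness is not used in this direction.

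For the converse I would fix $x,A,B,(\varepsilon_i)_{i\in A},(b_n)_{n\in B}$ as in Definition~\ref{dAF} and set $y:=x+\sum_{n\in B}b_ne_n$. The key structural observation is that because $\|x\|_\infty\leqslant 1\leqslant|b_n|$ and $\supp(x),A,B$ are pairwise disjoint, $B$ is a greedy set of $y$ of order $|B|$. Hence $G_{|B|}(y)=\sum_{n\in B}b_ne_n$ and $y-G_{|B|}(y)=x$, so the quasi-greedy and suppression quasi-greedy inequalities immediately yield
$$\|x\|\ \leqslant\ \mathbf{C}_\ell\|y\|\qquad\text{and}\qquad\left\|\sum_{n\in B}b_ne_n\right\|\ \leqslant\ \mathbf{C}_w\|y\|.$$
This already controls the ``$x$-part'' of the left-hand side of Property (A, $\mathcal{F}$) by $\|y\|=\|x+\sum_{n\in B}b_ne_n\|$.

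It then remains to bound $\|1_{\varepsilon A}\|$ in terms of $\|y\|$, which I plan to do by chaining three standard comparisons. \emph{(i) Sign absorption:} the well-known unconditionality-for-constant-coefficients consequence of quasi-greediness gives $\|1_{\varepsilon A}\|\leqslant K_1\|1_A\|$, with $K_1$ depending only on $\mathbf{C}_\ell$ and $\mathbf{C}_w$. \emph{(ii) $\mathcal{F}$-disjoint democracy:} since $A\in\mathcal{F}$, $|A|\leqslant|B|$, and $A\cap B=\emptyset$, we have $\|1_A\|\leqslant\mathbf{C}^{\mathcal{F}}_{d,\sqcup}\|1_B\|$. \emph{(iii) Lattice-type estimate:} another standard quasi-greedy fact (proved via the $\alpha$-truncation operator combined with UCC) says that if $|b_n|\geqslant 1$ on $B$, then $\|1_B\|\leqslant K_2\|\sum_{n\in B}b_ne_n\|$, with $K_2$ depending only on $\mathbf{C}_\ell$ and $\mathbf{C}_w$. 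Chaining (i)--(iii) with $\|\sum_{n\in B}b_ne_n\|\leqslant\mathbf{C}_w\|y\|$ gives $\|1_{\varepsilon A}\|\leqslant K_1\mathbf{C}^{\mathcal{F}}_{d,\sqcup}K_2\mathbf{C}_w\|y\|$, and combining with the earlier bound on $\|x\|$ via the triangle inequality closes the estimate with $\mathbf{C}^{\mathcal{F}}_b\leqslant\mathbf{C}_\ell+K_1K_2\mathbf{C}^{\mathcal{F}}_{d,\sqcup}\mathbf{C}_w$.

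The only real obstacle is bookkeeping: invoking (i) and (iii) quantitatively in terms of the quasi-greedy constants. Both are classical consequences of quasi-greediness (going back to Wojtaszczyk, with clean proofs via the $\alpha$-truncation operator recorded e.g.\ in Dilworth--Kalton--Kutzarova--Temlyakov or Chapter~10 of Albiac--Kalton), so either a direct citation or a short preliminary lemma suffices. No genuinely new combinatorial ingredient from $\mathcal{F}$ enters beyond the hypothesis $A\in\mathcal{F}$ with $|A|\leqslant|B|$, which is used only in step (ii).
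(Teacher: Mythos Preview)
Your proposal is correct and follows essentially the same route as the paper's proof: the forward direction is the trivial specialization, and for the converse you observe that $B$ is a greedy set of $y=x+\sum_{n\in B}b_ne_n$, use quasi-greediness to bound $\|x\|$ and $\|\sum_{n\in B}b_ne_n\|$ by $\|y\|$, and then chain unconditionality-for-constant-coefficients, $\mathcal{F}$-disjoint democracy, and the truncation estimate (your steps (i)--(iii), which are exactly the two parts of the paper's Lemma~\ref{bto}) before closing with the triangle inequality. The only cosmetic difference is that the paper runs the chain from $\|y\|$ down to $\|1_{\varepsilon A}\|$ and tracks the explicit constant $8\mathbf{C}_w\mathbf{C}_\ell^3\mathbf{C}^{\mathcal{F}}_{d,\sqcup}$, whereas you run it in the opposite direction and leave $K_1,K_2$ abstract.
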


The proof of Proposition \ref{p20} uses the following results which can be found in \cite{W} and  \cite[Lemma 2.5]{BBG}. 

\begin{lem}\label{bto}
Let $(e_n)$ be a $\mathbf C_\ell$-suppression quasi-greedy basis. The following hold 
\begin{enumerate}
    \item For any finite set $A\subset\mathbb{N}$ and sign $(\varepsilon_n)_n$, we have
$$\frac{1}{2\mathbf C_\ell}\left\|\sum_{n\in A}e_n\right\|\ \leqslant\ \left\|\sum_{n\in A}\varepsilon_ne_n\right\| \ \leqslant\ 2\mathbf C_\ell\left\|\sum_{n\in A}e_n\right\|.$$
\item For all $\alpha >0$ and $x \in X$,
$$\left\|\sum_{n \in \Gamma_\alpha(x)} \alpha \sgn(e_n^*(x))e_n + \sum_{n \not \in \Gamma_\alpha(x)} e_n^*(x)e_n\right\|\ \leqslant\ \mathbf{C}_\ell \|x\|,$$ 
where $\Gamma_\alpha (x) =\{n : |e^*_n(x)|>\alpha\}$.
\end{enumerate}
\end{lem}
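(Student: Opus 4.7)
The plan is to derive both parts from the defining inequality of suppression quasi-greediness, $\|x - G_m(x)\| \leqslant \mathbf{C}_\ell \|x\|$, using perturbation arguments that engineer the relevant object to appear as a suppression-quasi-greedy residue.

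For part (1), my first step is an auxiliary subset inequality: for every finite $B \subseteq A$,
$$\|1_B\|\ \leqslant\ \mathbf{C}_\ell\, \|1_A\|,$$
which follows by applying suppression quasi-greediness to the perturbation $y_t = 1_B + (1+t)\, 1_{A \setminus B}$ (for which $A \setminus B$ is the unique greedy set of size $|A\setminus B|$) and letting $t \to 0^+$. In the real-scalar case, I split $A = A_+\sqcup A_-$ according to $\varepsilon_n \in \{+1,-1\}$; the triangle inequality together with the subset inequality yields the upper bound $\|\sum_{n\in A} \varepsilon_n e_n\| \leqslant \|1_{A_+}\| + \|1_{A_-}\| \leqslant 2\mathbf{C}_\ell \|1_A\|$. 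For the lower bound, I would perturb $\sum_{n\in A} \varepsilon_n e_n$ to $(1+t)\, 1_{A_+} - 1_{A_-}$ so that $A_+$ becomes a greedy set of size $|A_+|$, and apply suppression quasi-greediness to isolate $\|1_{A_-}\| \leqslant \mathbf{C}_\ell \|\sum \varepsilon_n e_n\|$ (and symmetrically for $\|1_{A_+}\|$); a final triangle inequality then gives $\|1_A\| \leqslant 2\mathbf{C}_\ell \|\sum \varepsilon_n e_n\|$. The complex-sign case $|\varepsilon_n|=1$ reduces to the real case by a standard convexity/averaging argument on the unit torus $\mathbb{T}^{|A|}$.

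For part (2), after reducing to finitely supported $x$ by density, I would set $\Gamma = \Gamma_\alpha(x)$ and sort the nonzero coefficients as $|a_{n_1}| \geqslant |a_{n_2}| \geqslant \cdots$, so that $\Gamma = \{n_1,\ldots,n_k\}$ with $|a_{n_k}|>\alpha\geqslant |a_{n_{k+1}}|$. The strategy, following Wojtaszczyk, is to realize the truncated vector
$$y\ :=\ \sum_{n \in \Gamma} \alpha\, \sgn(e_n^*(x))\, e_n + \sum_{n \notin \Gamma} e_n^*(x)\, e_n$$
as a limit or telescoping sum of genuine suppression-quasi-greedy residues. Concretely, one expands the differences $y - x$ via an Abel-type decomposition across the consecutive thresholds $|a_{n_j}|$, and at each step applies suppression quasi-greediness to a perturbation of $x$ whose norm collapses to $\|x\|$ in the limit; the resulting bounds combine to the stated factor $\mathbf{C}_\ell$. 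The detailed bookkeeping is exactly what is carried out in \cite{W} and \cite[Lemma 2.5]{BBG}.

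The main obstacle is part (2): unlike part (1), $y$ is not the residue of any single greedy operator applied to $x$, because cutting at level $\alpha$ is a partial rather than complete removal of the top-$k$ coefficients. The technical challenge is to find the right family of perturbations so that $y$ appears as (the limit of) suppression-quasi-greedy residues while the comparison norms converge to $\|x\|$, rather than to some larger quantity like $\|x\| + \alpha\|1_\Gamma\|$ — this delicate balancing is the content of Wojtaszczyk's argument and is the step I expect to consume most of the work.
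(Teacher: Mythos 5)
The paper offers no proof of this lemma at all---it is imported from the literature, with the surrounding text pointing to \cite{W} and \cite[Lemma~2.5]{BBG}---so there is no in-paper argument to compare yours against; I can only judge the sketch on its own merits. Part (1) is correct and essentially complete for real scalars. In fact the $(1+t)$-perturbations are unnecessary: the paper's definition of a greedy set only requires $\min_{n\in\Lambda}|e_n^*(x)|\geqslant\max_{n\notin\Lambda}|e_n^*(x)|$, so for $1_A$, and for $1_{\varepsilon A}$ with unimodular signs, \emph{every} subset of the support of size $m$ is already a greedy set of order $m$; the subset inequality and both sign estimates then follow directly from $\|x-G_m(x)\|\leqslant\mathbf C_\ell\|x\|$ with no limiting argument. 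One caveat: over $\mathbb{C}$, reducing to the real case by splitting the signs into real and imaginary parts costs an extra factor of $2$ (yielding $4\mathbf C_\ell$ rather than $2\mathbf C_\ell$), so the complex case needs a different argument if the stated constant is to be preserved.

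Part (2), as written, is a genuine gap: you describe the target and the general shape of the argument but defer ``the detailed bookkeeping'' to the very references the paper itself cites, so nothing is actually proved. The missing step is, however, short and exact---no limits, perturbations, or delicate balancing are needed. Order $\Gamma_\alpha(x)=\{n_1,\dots,n_k\}$ so that $c_j:=|e_{n_j}^*(x)|$ is nonincreasing, put $\lambda_j:=\alpha/c_j$ (so $0<\lambda_1\leqslant\cdots\leqslant\lambda_k<1$) and $\lambda_0:=0$. Writing $\alpha\sgn(e_{n_j}^*(x))=\lambda_j e_{n_j}^*(x)$, Abel summation gives the exact identity
$$\sum_{n\in\Gamma_\alpha(x)}\alpha\sgn(e_n^*(x))e_n+\sum_{n\notin\Gamma_\alpha(x)}e_n^*(x)e_n\ =\ \sum_{j=1}^{k}(\lambda_j-\lambda_{j-1})\bigl(x-G_{j-1}(x)\bigr)+(1-\lambda_k)\bigl(x-G_k(x)\bigr),$$
where $G_j(x)=\sum_{i\leqslant j}e_{n_i}^*(x)e_{n_i}$ are genuine greedy sums of $x$ (ties being permitted by the definition of a greedy set). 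The right-hand side is a convex combination of vectors each of norm at most $\mathbf C_\ell\|x\|$, so the bound follows at once. In particular the difficulty you anticipate---that the comparison norms might converge to something larger than $\|x\|$---never arises, because every term in the combination is a residue of $x$ itself.
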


\begin{proof}[Proof of Proposition \ref{p20}]
It is obvious that Property (A, $\mathcal{F}$) implies $\mathcal{F}$-disjoint democracy. Let us assume that $(e_n)$ is $\mathbf C^{\mathcal{F}}_{d, \sqcup}$-$\mathcal{F}$-disjoint democratic and is $\mathbf C_\ell$-suppression quasi-greedy (or $\mathbf C_w$-quasi-greedy). Let $x, A, B, (b_n), (\varepsilon_i)$ be as in Definition \ref{dAF}.  Since $B$ is a greedy set of $x+\sum_{n\in B}b_n e_n$, we have
\begin{align*}
    \left\|x+\sum_{n\in B}b_n e_n\right\|\ \geqslant\ \frac{1}{\mathbf C_w}\left\|\sum_{n\in B}b_n e_n\right\|&\ \geqslant\ \frac{1}{\mathbf C_w\mathbf C_\ell}\left\|\sum_{n\in B}\sgn(b_n)e_n\right\|\mbox{ by Lemma \ref{bto}}\\
    &\ \geqslant\ \frac{1}{2\mathbf C_w\mathbf C^2_\ell}\|1_B\|\mbox{ by Lemma \ref{bto}}\\
    &\ \geqslant\ \frac{1}{2\mathbf C_w\mathbf C^2_\ell\mathbf C^{\mathcal{F}}_{d,\sqcup}}\|1_A\|\ \geqslant\ \frac{1}{4\mathbf C_w\mathbf C^3_\ell\mathbf C^{\mathcal{F}}_{d,\sqcup}}\|1_{\varepsilon A}\|.
\end{align*}
Again since $B$ is a greedy set of $x+\sum_{n\in B}b_n e_n$, 
$$\left\|x+\sum_{n\in B}b_n e_n\right\|\ \geqslant\ \frac{1}{\mathbf C_\ell}\|x\|.$$
Therefore, we obtain 
$$2\left\|x+\sum_{n\in B}b_n e_n\right\|\ \geqslant\ \frac{1}{4\mathbf C_w\mathbf C^3_\ell\mathbf C^{\mathcal{F}}_{d}}\|1_{\varepsilon A}\|+\frac{1}{\mathbf C_\ell}\|x\|\ \geqslant\  \frac{1}{4\mathbf C_w\mathbf C^3_\ell\mathbf C^{\mathcal{F}}_{d}}\|1_{\varepsilon A} + x\|.$$
We have shown that 
$$\|x+ 1_{\varepsilon A}\|\ \leqslant\ 8\mathbf C_w\mathbf C^3_\ell\mathbf C^{\mathcal{F}}_{d}\left\|x+\sum_{n\in B}b_n e_n\right\|,$$
which completes our proof that $(e_n)$ has Property (A, $\mathcal{F}$).
\end{proof}

\begin{thm}\label{m2}
For a basis $(e_n)$ of a Banach space $X$, the following are equivalent:
\begin{enumerate}
    \item $(e_n)$ is $\mathcal{F}$-greedy,
    \item $(e_n)$ is $\mathcal{F}$-unconditional and has Property (A, $\mathcal{F}$),
    \item $(e_n)$ is $\mathcal{F}$-unconditional, $\mathcal{F}$-disjoint superdemocratic, and quasi-greedy,
    \item $(e_n)$ is $\mathcal{F}$-unconditional, $\mathcal{F}$-disjoint democratic, and quasi-greedy. 
\end{enumerate}
\end{thm}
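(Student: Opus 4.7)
The plan is to close the cycle $(1) \Leftrightarrow (2) \Rightarrow (3) \Rightarrow (4) \Rightarrow (2)$, thereby assembling pieces that are already in place: nearly all the substantive content lives in Theorem \ref{m1} and Proposition \ref{p20}, and this theorem is essentially a repackaging of those results. The equivalence $(1) \Leftrightarrow (2)$ is read off immediately from Theorem \ref{m1}, whose part (1) gives the forward direction and part (2) the converse.

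For $(2) \Rightarrow (3)$, the three ingredients must be produced separately. $\mathcal{F}$-unconditionality is already hypothesized. Quasi-greediness follows from the observation that $\emptyset \in \mathcal{F}$ (by heredity), so the infimum defining $\sigma_m^{\mathcal{F}}(x)$ is at most $\|x\|$; combined with the $\mathcal{F}$-greedy inequality $\|x - G_m(x)\| \leqslant \mathbf{C}_g^{\mathcal{F}} \sigma_m^{\mathcal{F}}(x)$ obtained via Theorem \ref{m1}(2), this yields suppression quasi-greediness. The $\mathcal{F}$-disjoint superdemocracy is a direct specialization of Property (A, $\mathcal{F}$): take $x = 0$ and $|b_n|=1$ to read off $\|1_{\varepsilon A}\| \leqslant \mathbf{C}_b^{\mathcal{F}} \|1_{\delta B}\|$ for all disjoint admissible $A, B$ and all choices of signs, which is exactly Definition \ref{demo} in the superdemocratic form.

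The step $(3) \Rightarrow (4)$ is immediate by specializing all signs to $+1$. Finally, $(4) \Rightarrow (2)$ is the only remaining nontrivial deduction, and it is precisely Proposition \ref{p20}: under quasi-greediness, $\mathcal{F}$-disjoint democracy upgrades to Property (A, $\mathcal{F}$), which together with the hypothesized $\mathcal{F}$-unconditionality yields (2) and closes the loop. I do not expect any real obstacle at this stage; the heavy lifting has already been done in Theorem \ref{m1} and Proposition \ref{p20}, so the argument reduces to bookkeeping. The only mild subtlety worth flagging is the use of $\emptyset \in \mathcal{F}$ when extracting quasi-greediness from the $\mathcal{F}$-greedy inequality, which is automatic whenever $\mathcal{F}$ is a nonempty hereditary collection.
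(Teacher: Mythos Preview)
Your proof is correct and follows essentially the same route as the paper: the cycle $(1)\Leftrightarrow(2)\Rightarrow(3)\Rightarrow(4)\Rightarrow(2)$, with Theorem~\ref{m1} handling $(1)\Leftrightarrow(2)$, Property~(A,~$\mathcal{F}$) specialized at $x=0$ yielding $\mathcal{F}$-disjoint superdemocracy, and Proposition~\ref{p20} closing the loop at $(4)\Rightarrow(2)$. The paper's proof is terser---it simply asserts ``an $\mathcal{F}$-greedy basis is quasi-greedy'' without spelling out the role of $\emptyset\in\mathcal{F}$---but your more explicit justification of that step is a welcome clarification rather than a genuine departure.
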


\begin{proof}[Proof of Theorem \ref{m2}]
By Theorem \ref{m1}, we have that (1) $\Longleftrightarrow$ (2). Since an $\mathcal{F}$-greedy basis is quasi-greedy, and Property (A, $\mathcal{F}$) implies $\mathcal{F}$-disjoint superdemocracy (by definition), we get (1) $\Longleftrightarrow$ (2) $\Longrightarrow$ (3). Trivially, (3) $\Longrightarrow$ (4). That (4) $\Longrightarrow$ (2) is due to Proposition \ref{p20}. 
\end{proof}

\section{Characterizations of $\mathcal{F}$-almost greedy bases}

In this section, we first characterize $\mathcal{F}$-almost greedy bases using Property (A, $\mathcal{F}$), then show that the $\mathcal{F}$-almost greedy property is equivalent to the quasi-greedy property plus $\mathcal{F}$-disjoint superdemocracy.    

\begin{thm}\label{m6}
A basis $(e_n)$ is $C$-$\mathcal{F}$-almost greedy if and only if $(e_n)$ has $C$-Property (A, $\mathcal{F}$).
\end{thm}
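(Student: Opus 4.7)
The plan is to prove both implications directly, each preserving the constant $C$ exactly.

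For ``$\mathcal{F}$-almost greedy $\Longrightarrow$ Property (A, $\mathcal{F}$)'', I would fix data $x, A, B, (\varepsilon_i)_{i \in A}, (b_n)_{n \in B}$ as in Definition \ref{dAF} and form the test vector
$$y \ :=\ x + 1_{\varepsilon A} + \sum_{n \in B} b_n e_n.$$
Since $\|x\|_\infty \leqslant 1$, the sets $A, B, \supp(x)$ are pairwise disjoint, and $|b_n| \geqslant 1$, the coefficients of $y$ on $B$ each have modulus at least $1$ while all other coefficients have modulus at most $1$. Hence $B$ is an admissible greedy set of order $m := |B|$ and $G_m(y) = \sum_{n \in B} b_n e_n$. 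Noting that $P_A(y) = 1_{\varepsilon A}$ because $A$ is disjoint from $\supp(x) \cup B$, the $\mathcal{F}$-almost greedy estimate applied with projection set $A$ (permissible since $A \in \mathcal{F}$ and $|A| \leqslant m$) yields
$$\|x + 1_{\varepsilon A}\|\ =\ \|y - G_m(y)\|\ \leqslant\ C\,\|y - P_A(y)\|\ =\ C\left\|x + \sum_{n \in B} b_n e_n\right\|,$$
which is Property (A, $\mathcal{F}$) with constant $C$.

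For the converse, I will exploit the equivalent form of Property (A, $\mathcal{F}$) given by Proposition \ref{p1}. Fix $x \in X$, a greedy set $\Lambda = \Lambda_m(x)$ with threshold $\alpha := \min_{n \in \Lambda}|e_n^*(x)|$, and an arbitrary $A \in \mathcal{F}$ with $|A| \leqslant m$; the case $\alpha = 0$ forces $x = P_\Lambda(x)$ and is trivial, so assume $\alpha > 0$. Set
$$B\ :=\ \Lambda \setminus A,\quad A'\ :=\ A \setminus \Lambda,\quad w\ :=\ x - P_{\Lambda \cup A}(x),$$
and observe that $A' \in \mathcal{F}$ by hereditarity, $|A'| \leqslant |B|$ (since $|\Lambda| = m \geqslant |A|$), and the three sets $\supp(w), A', B$ are pairwise disjoint. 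Define $\tilde z := (w + P_{A'}(x))/\alpha$ and $b_n := e_n^*(x)/\alpha$ for $n \in B$; then $\|\tilde z\|_\infty \leqslant 1$ (every coefficient of $\tilde z$ sits outside $\Lambda$ and so has modulus at most $\alpha$), $|b_n| \geqslant 1$ (since $B \subset \Lambda$), and $B \cap (A' \cup \supp(\tilde z)) = \emptyset$. Proposition \ref{p1} then gives
$$\|\tilde z\|\ \leqslant\ C\,\left\|\tilde z - P_{A'}(\tilde z) + \sum_{n \in B} b_n e_n\right\|,$$
and multiplying through by $\alpha$, together with the identities $w + P_{A'}(x) = x - P_\Lambda(x)$ and $w + P_B(x) = x - P_A(x)$, reduces this to $\|x - P_\Lambda(x)\| \leqslant C\,\|x - P_A(x)\|$. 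Taking the infimum over admissible $A$ delivers the $\mathcal{F}$-almost greedy inequality with constant $C$.

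The main point to watch is that the constants transfer exactly in both directions. This rests on two bookkeeping checks: in the forward step, that $B$ genuinely qualifies as a greedy set of $y$ of order $|B|$; and in the reverse step, that the common rescaling by $\alpha$ simultaneously places $\tilde z$ in the unit $\ell_\infty$-ball and pushes the $b_n$ outside it, so that the hypotheses of Proposition \ref{p1} are met verbatim. Neither is a real obstacle; after these identifications the two inequalities become essentially equivalent reformulations of one another, yielding $\mathbf C_a^{\mathcal{F}} = \mathbf C_b^{\mathcal{F}}$.
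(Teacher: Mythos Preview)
Your proof is correct and follows essentially the same route as the paper. The forward direction is identical to what the paper does (borrowing the construction from Theorem~\ref{m1}); for the converse, the paper applies Proposition~\ref{p1} directly to $x-P_\Lambda(x)$ without spelling out the rescaling by $\alpha$ (relying on the obvious homogeneity of the inequality in~\eqref{e1}), whereas you make that normalization explicit---but the argument is the same.
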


\begin{proof}[Proof of Theorem \ref{m6}]
The proof that $C$-$\mathcal{F}$-almost greediness implies $C$-Property (A, $\mathcal{F}$) is similar to what we have in the proof of Theorem \ref{m1}. Conversely, assume that $(e_n)$ has $C$-Property (A, $\mathcal{F}$). Let $x\in \mathbb{X}$ with a greedy set $A$. 
Choose $B\in \mathcal{F}$ with $|B|\leqslant |A|$. 
If $A\backslash B = \emptyset$, then $A = B$ and $\|x-P_A(x)\| = \|x-P_B(x)\|$. If $A\backslash B\neq \emptyset$, 
note that $\min_{n\in A\backslash B}|e_n^*(x)|\geqslant \|x-P_A(x)\|_\infty$.
By Proposition \ref{p1}, we have
\begin{align*}
    \|x-P_A(x)\|&\ \leqslant\ C\|(x-P_A(x)) - P_{B\backslash A}(x) + P_{A\backslash B}(x)\|\\
    &\ =\ C\|x-P_B(x)\|.
\end{align*}
Since $B$ is arbitrary, we know that $(e_n)$ is $C$-$\mathcal{F}$-almost greedy. 
\end{proof}

\begin{thm}\label{m3}
Let $(e_n)$ be a basis. The following are equivalent:
\begin{enumerate}
    \item $(e_n)$ is $\mathcal{F}$-almost greedy,
    \item $(e_n)$ has Property (A, $\mathcal{F}$),
    \item $(e_n)$ is $\mathcal{F}$-disjoint superdemocratic and quasi-greedy,
    \item $(e_n)$ is $\mathcal{F}$-disjoint democratic and quasi-greedy. 
\end{enumerate}
\end{thm}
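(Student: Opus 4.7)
The plan is to close the cycle of implications by leveraging results already established in the paper, since (1) $\Longleftrightarrow$ (2) is Theorem \ref{m6}. The remaining work is to verify (2) $\Longrightarrow$ (3) $\Longrightarrow$ (4) $\Longrightarrow$ (2), using Proposition \ref{p20} as the main nontrivial ingredient.

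First I would prove (2) $\Longrightarrow$ (3). To obtain $\mathcal{F}$-disjoint superdemocracy from Property (A, $\mathcal{F}$), I specialize Definition \ref{dAF} by setting $x=0$: for disjoint finite sets $A,B$ with $A\in\mathcal{F}$, $|A|\leqslant|B|$, and arbitrary signs $(\varepsilon_i),(\delta_n)$, choosing $b_n=\delta_n$ (which has $|b_n|=1\geqslant 1$) gives
$$\left\|\sum_{i\in A}\varepsilon_i e_i\right\|\ \leqslant\ \mathbf{C}^{\mathcal{F}}_b\left\|\sum_{n\in B}\delta_n e_n\right\|,$$
which is exactly $\mathcal{F}$-disjoint superdemocracy. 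For quasi-greediness, I use that (2) implies (1) via Theorem \ref{m6}: taking $A=\emptyset\in\mathcal{F}$ in the defining infimum of $\mathcal{F}$-almost greediness yields $\|x-G_m(x)\|\leqslant C\|x\|$ for all $x$ and all greedy sums, so $(e_n)$ is suppression quasi-greedy (and hence quasi-greedy).

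Next, (3) $\Longrightarrow$ (4) is immediate, since $\mathcal{F}$-disjoint superdemocracy applied with all signs equal to $+1$ is precisely $\mathcal{F}$-disjoint democracy.

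Finally, (4) $\Longrightarrow$ (2) is exactly the content of Proposition \ref{p20}: a quasi-greedy basis that is $\mathcal{F}$-disjoint democratic has Property (A, $\mathcal{F}$). Combined with Theorem \ref{m6}, this closes the loop back to (1). I do not foresee any real obstacle here, since all the heavy lifting — the characterization of $\mathcal{F}$-almost greediness via Property (A, $\mathcal{F}$) in Theorem \ref{m6} and the passage from disjoint democracy plus quasi-greediness to Property (A, $\mathcal{F}$) in Proposition \ref{p20} — has already been done. The only point worth a brief sanity check is that $\emptyset\in\mathcal{F}$ so that the infimum in the almost greedy inequality is nonvacuous, which holds whenever $\mathcal{F}$ is a nonempty hereditary family.
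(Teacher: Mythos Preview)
Your proposal is correct and follows essentially the same route as the paper: use Theorem \ref{m6} for (1) $\Longleftrightarrow$ (2), observe that (2) $\Longrightarrow$ (3) $\Longrightarrow$ (4), and close the loop with Proposition \ref{p20} for (4) $\Longrightarrow$ (2). In fact you spell out the step (2) $\Longrightarrow$ (3) more explicitly than the paper does, by specializing $x=0$ in Definition \ref{dAF} for superdemocracy and by taking $A=\emptyset$ in the almost-greedy inequality for quasi-greediness.
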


\begin{proof}[Proof of Theorem \ref{m3}]
That (1) $\Longleftrightarrow$ (2) follows from Theorem \ref{m6}. Clearly, an $\mathcal{F}$-almost greedy basis is quasi-greedy. By Proposition \ref{p20}, we have (2) $\Longleftrightarrow$ (4). Since (1) $\Longleftrightarrow$ (2) $\Longrightarrow$ (3) $\Longrightarrow$ (4), we are done. 
\end{proof}

\begin{cor}[Generalization of Theorem 2.3 in \cite{AA}]
A basis $(e_n)$ is $1$-$\mathcal{F}$-almost greedy if and only if $(e_n)$ has $1$-Property (A, $\mathcal{F}$).
\end{cor}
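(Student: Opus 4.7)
The plan is to derive this corollary as the immediate specialization to $C=1$ of Theorem \ref{m6}. Since Theorem \ref{m6} asserts the stronger quantitative equivalence between $C$-$\mathcal{F}$-almost greediness and $C$-Property (A, $\mathcal{F}$), there is essentially nothing new to prove; the only thing to verify is that the constant is genuinely preserved (and not merely bounded up to an absolute factor) in both directions of Theorem \ref{m6}.

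For the forward direction, I would recall from the proof of Theorem \ref{m6} (which mirrors the argument in Theorem \ref{m1}) that, given $x$, $A$, $B$, $(\varepsilon_i)$, and $(b_n)_{n\in B}$ as in Definition \ref{dAF}, one forms $y := x + 1_{\varepsilon A} + \sum_{n \in B} b_n e_n$, observes that $B$ is a greedy set of $y$ of order $|B|$, and applies the $1$-$\mathcal{F}$-almost greedy inequality with the admissible set $A \in \mathcal{F}$ satisfying $|A| \leqslant |B|$, obtaining
\[
\|x + 1_{\varepsilon A}\|\ =\ \|y - P_B(y)\|\ \leqslant\ 1\cdot \|y - P_A(y)\|\ =\ \left\|x + \sum_{n\in B} b_n e_n\right\|.
\]
Thus $\mathbf{C}_b^{\mathcal{F}} \leqslant 1$, which is the definition of $1$-Property (A, $\mathcal{F}$).

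For the converse, I would invoke the corresponding half of Theorem \ref{m6}: assuming $1$-Property (A, $\mathcal{F}$), let $x \in X$ with greedy set $A$ of order $m$, and pick any $B \in \mathcal{F}$ with $|B| \leqslant m$. The case $A \subset B$ is trivial since then $P_A = P_B$ on $\{e_i^*(x) : i \in A\}$ configurations, so $A = B$ and $\|x-P_A(x)\| = \|x-P_B(x)\|$. In the case $A \setminus B \neq \emptyset$, hereditariness gives $B \setminus A \in \mathcal{F}$ with $|B\setminus A| \leqslant |A\setminus B|$, and the greedy-set property ensures $\min_{n \in A\setminus B}|e_n^*(x)| \geqslant \|x - P_A(x)\|_\infty$. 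Applying Proposition \ref{p1} with the normalized vector $x - P_A(x)$ (or rather the appropriate rescaling), the set $B \setminus A \in \mathcal{F}$, and coefficients $e_n^*(x)$ on $A \setminus B$ yields
\[
\|x - P_A(x)\|\ \leqslant\ 1\cdot\|(x-P_A(x)) - P_{B\setminus A}(x) + P_{A\setminus B}(x)\|\ =\ \|x - P_B(x)\|,
\]
and taking the infimum over $B \in \mathcal{F}$ with $|B| \leqslant m$ gives $\mathbf{C}_a^{\mathcal{F}} \leqslant 1$. There is no genuine obstacle; the only minor point to double-check is that the application of Proposition \ref{p1} respects the unit-sup-norm hypothesis (which is why one rescales $x - P_A(x)$ by $\|x - P_A(x)\|_\infty^{-1}$ before invoking \eqref{e1}), but this rescaling is linear and the inequality is homogeneous, so the constant $1$ passes through cleanly.
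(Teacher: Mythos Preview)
Your proposal is correct and matches the paper's approach exactly: the corollary is stated immediately after Theorem \ref{m6} with no separate proof, being the trivial specialization to $C=1$. Your reproduction of the two halves of the Theorem \ref{m6} argument and your remark on the homogeneity/rescaling needed for Proposition \ref{p1} are all accurate.
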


\section{Schreier families and $\mathcal{S}_\alpha$-greedy bases}\label{pc}

In this section, we will provide several non-trivial examples of $\mathcal{F}$-greedy basis. In particular, we will consider bases which are quasi-greedy but not greedy. As mentioned in the introduction, the Schreier families $\mathcal{S}_\alpha$ form a particularly rich collection of finite subsets of $\mathbb{N}$. 

\begin{proof}[Proof of Corollary \ref{m30'}]
Fix two countable ordinals $\alpha < \beta$. Let $N$ be as in Proposition \ref{k2}. Suppose that $(e_n)$ is $C$-$\mathcal{S}_\beta$-greedy for some constant $C\geqslant 1$. By Theorems \ref{m1'} and \ref{m1}, $(e_n)$ is  $C$-$\mathcal{S}_\beta$-suppression unconditional, $C$-$\mathcal{S}_\beta$-disjoint democratic, and $C$-suppression quasi-greedy. 

We show that $(e_n)$ is $C$-$\mathcal{S}_\alpha$-suppression unconditional. Let $x\in X$ and $E\in \mathcal{S}_\alpha$. We know that $E\backslash \{1, \ldots, N-1\}\in \mathcal{S}_\beta$. Hence, 
$$\|x-P_{E\backslash \{1, \ldots, N-1\}}(x)\|\ \leqslant\ C\|x\|.$$
We have
\begin{align*}
\|x-P_{E}(x)\|&\ \leqslant\ \|x-P_{E\backslash \{1, \ldots, N-1\}}(x)\|  + \|P_{E\cap \{1, \ldots, N-1\}}(x)\|\\
&\ \leqslant\ C\|x\| + N\sup_{n}\|e_n\|\|e_n^*\|\|x\|\ \leqslant\ (C+Nc_2^2)\|x\|.
\end{align*}
Therefore, $(e_n)$ is $\mathcal{S}_\alpha$-suppression unconditional. 

Next, we show that $(e_n)$ is $C$-$\mathcal{S}_\alpha$-disjoint democratic. Let $A\in \mathcal{S}_\alpha$ and $B\subset \mathbb{N}$ such that $A\cap B = \emptyset$ and $|A|\leqslant |B|$. Since $A\backslash \{1, \ldots, N-1\}\in\mathcal{S}_\beta$, we have
$$\|1_{A\backslash \{1, \ldots, N-1\}}\|\ \leqslant\ C\|1_B\|$$
Also, due to $C$-quasi-greediness, 
$$C\|1_B\|\ \geqslant\ c_1.$$
Hence,
\begin{align*}
    \|1_A\|&\ \leqslant\ \|1_{A\backslash \{1, \ldots, N-1\}}\| + \|1_{A\cap \{1, \ldots, N-1\}}\|\\
    &\ \leqslant\ C\|1_B\| + c_2N\ \leqslant\ C\|1_B\| + \frac{Cc_2N}{c_1}\|1_B\|\ =\ C\left(1+N\frac{c_2}{c_1}\right)\|1_B\|.
\end{align*}
Therefore, $(e_n)$ is $\mathcal{S}_\alpha$-disjoint democratic. 

By Theorem \ref{m1'}, we conclude that $(e_n)$ is $\mathcal{S}_\alpha$-greedy. 
\end{proof}

We have
$$\mbox{quasi-greedy}\ \Longleftarrow\ \mathcal{S}_\alpha\mbox{-greedy}\ \Longleftarrow\  \mathcal{S}_\beta\mbox{-greedy}\  \Longleftarrow \ \mbox{greedy}.$$
We construct bases to show that none of the reverse implications holds. Consider the following definition.

\begin{defi} \normalfont
Let $\omega_1$ denote the set of all countable ordinals and $(\alpha,\beta) \in (\omega_1\cup \{\infty\})^2$. A quasi-greedy basis $(e_n)$ for a Banach space $X$ is called $(\alpha,\beta)$-quasi-greedy if and only if $(e_n)$ is $\mathcal{S}_\alpha$-unconditional but not $\mathcal{S}_{\alpha+1}$-unconditional and $\mathcal{S}_\beta$-disjoint democratic but not $\mathcal{S}_{\beta+1}$-disjoint democratic. 

Suppose that either $\alpha$ or $\beta$ is $\infty$. If we denote by $\mathcal{S}_\infty$ the set of all finite subsets of $\mathbb{N}$, then $\mathcal{S}_\infty$-unconditionality and $\mathcal{S}_\infty$-disjoint democracy coincide with unconditionality and disjoint democracy, respectively.
\end{defi}

\begin{rek}\normalfont
Due to the proof of Corollary \ref{m30'}, a basis $(e_n)$ for a Banach space $X$ is $\mathcal{S}_{\eta}$-greedy if and only if it is $(\alpha,\beta)$-quasi-greedy for some $\alpha\geqslant \eta$ and $\beta \geqslant \eta$. Note also that the $(\infty,\infty)$-quasi-greedy property is the same as the greedy property, and a $(0,0)$-quasi-greedy basis is quasi-greedy but is far from being greedy.
\end{rek}

We prove Theorem \ref{m30} by providing the following examples.

\begin{thm}\label{zerozero}
There are spaces with bases $(e_n)$ that are $(0,0)$-quasi-greedy, $(\infty, 0)$-quasi-greedy, and $(0, \infty)$-quasi-greedy.  
\end{thm}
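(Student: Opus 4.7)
The aim is to exhibit three separate spaces with bases, one for each listed class. Each class forces a defect at exactly Schreier level $\mathcal{S}_1$, either in $\mathcal{F}$-unconditionality, in $\mathcal{F}$-disjoint democracy, or in both, so the problem splits into three independent constructions. For the $(0,0)$-case, essentially any standard quasi-greedy, non-unconditional, non-democratic basis (e.g.\ a summing-basis variant as in \cite[Example 10.2.9]{AK}) should suffice; the extra observation is that its witnesses to the failures of unconditionality and of democracy can be realized on Schreier-admissible blocks like $\{n,n+1,\dots,2n-1\}$, giving the desired $\mathcal{S}_1$-level defects.

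For the $(\infty,0)$-case, I would build a $1$-unconditional basis in a Tsirelson-type space based on $\mathcal{S}_1$, tuned so that the indicator norms $\|1_A\|$ behave very differently on Schreier-admissible $A$ versus on ``early'' disjoint sets $B$ of the same cardinality. Unconditionality is then automatic from the construction, while $\mathcal{S}_1$-disjoint democracy is broken along a carefully chosen sequence of pairs $(A_m,B_m)$ with $A_m\in\mathcal{S}_1$, disjoint $B_m$ of equal or greater cardinality, and $B_m$ chosen so that it admits no large Schreier-admissible subset, forcing $\|1_{B_m}\|$ to lag quantitatively behind $\|1_{A_m}\|$. For the $(0,\infty)$-case, I would start from a symmetric basis (say the $\ell_2$-basis $(e_n)$) and introduce conditionality precisely at Schreier level $1$ by a Wojtaszczyk/Dilworth--Kalton--Kutzarova-style triangular perturbation $f_n=e_n+\eta_n$, with $\eta_n$ a small, summable combination of earlier $e_k$'s arranged so that partial-sum projections indexed by $\mathcal{S}_1$-admissible blocks accumulate unboundedly large cancellations, while the summability of $(\eta_n)$ together with the symmetry of $(e_n)$ preserves full disjoint democracy for $(f_n)$.

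The main obstacle is the $(\infty,0)$-case, because every initial segment of $\mathbb{N}$ contains a Schreier-admissible subset of proportional size, so a naive Schreier-weighted norm tends to end up $\mathcal{S}_1$-disjoint democratic rather than failing the property. Overcoming this requires a careful choice of recursion parameters or weights (in a Tsirelson-like setting) so that ``deeply embedded'' indicator norms dominate ``early'' ones at the Schreier-$1$ scale, all while maintaining the seminormalized basis condition $\inf_i\|e_i\|>0$, which puts an upper bound on any single-index growth. Once the three examples are constructed and verified, Theorem~\ref{m1'} and its disjoint-democratic/unconditional characterization package each failure as the corresponding $(\alpha,\beta)$-quasi-greedy profile, completing the proof.
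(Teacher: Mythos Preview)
Your overall decomposition into three independent constructions is correct, and your sketches for the $(0,0)$ and $(0,\infty)$ cases point in workable directions, though they remain sketches rather than proofs. The paper's $(0,0)$ example is indeed a Konyagin--Temlyakov-style conditional quasi-greedy basis, built as a $c_0$-sum of finite-dimensional blocks each carrying both an $\ell_2$ norm and a weighted partial-sum norm; the same blocks, assembled instead as an $\ell_2$-sum, give the $(0,\infty)$ example, since the $\ell_2$ outer norm forces $\|1_A\|\asymp |A|^{1/2}$ and hence full democracy, while conditionality survives on the same Schreier-admissible witnesses. So your perturbation-of-$\ell_2$ idea for $(0,\infty)$ may be viable, but the paper's route is more economical: reuse the $(0,0)$ building blocks and change only the outer sum. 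One caution on the $(0,0)$ case: ``any standard quasi-greedy, non-unconditional, non-democratic basis'' does not automatically fail $\mathcal{S}_1$-unconditionality or $\mathcal{S}_1$-disjoint democracy---the paper itself builds bases that are $\mathcal{S}_\alpha$-unconditional without being unconditional---so you do have to exhibit the witnesses explicitly, as the paper does.

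The substantive gap is in your $(\infty,0)$ case. You flag it as the ``main obstacle'' and propose a Tsirelson-type construction, then worry that any Schreier-weighted norm will end up $\mathcal{S}_1$-disjoint democratic because initial segments always contain proportional Schreier subsets. That worry is real \emph{for the approach you chose}, but the approach itself is the wrong one. Recall the definition: $\mathcal{S}_1$-disjoint democracy fails once there exist $A\in\mathcal{S}_1$ and disjoint $B$ with $|A|\leqslant |B|$ and $\|1_A\|/\|1_B\|$ unbounded; there is no Schreier constraint on $B$, and the norm need not reference Schreier sets at all. The paper simply takes the canonical basis of the space with norm $\|x\|=\sum_{2\mid i}|x_i|+\bigl(\sum_{2\nmid i}|x_i|^2\bigr)^{1/2}$, i.e.\ $\ell_1$ on the evens and $\ell_2$ on the odds. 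This is $1$-unconditional (hence quasi-greedy), and the pair $A=\{2N,2N+2,\dots,4N-2\}\in\mathcal{S}_1$, $B=\{1,3,\dots,2N-1\}$ gives $\|1_A\|=N$ against $\|1_B\|=\sqrt N$. Your Tsirelson route, besides being far more intricate, offers no clear candidate for a set $B$ with anomalously small indicator norm, since Tsirelson-type norms are typically democratic.
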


\begin{thm}\label{minfinity}
Fix a nonzero $\alpha\in \omega_1$. There is a space $X_{\alpha,\infty}$ with a basis $(e_n)$ that is $(\alpha,\infty)$-quasi-greedy. Hence, $X_{\alpha,\infty}$ is $\mathcal{S}_\alpha$-greedy but not $\mathcal{S}_{\alpha+1}$-greedy.
\end{thm}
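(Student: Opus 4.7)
The plan is to construct $X_{\alpha,\infty}$ by lifting a base-case example from Theorem \ref{zerozero} through a Tsirelson--Schreier-type norm, so that the conditional behaviour of the basis migrates from the $\mathcal{S}_1$-scale to the $\mathcal{S}_{\alpha+1}$-scale while disjoint democracy is preserved at every level. Fix a space $Y$ with a $(0,\infty)$-quasi-greedy basis $(\varphi_n)$ supplied by Theorem \ref{zerozero}; by definition this basis is quasi-greedy and disjoint democratic but not $\mathcal{S}_1$-unconditional. Then define $X_{\alpha,\infty}$ on $c_{00}$ by a norm of the form
\begin{equation*}
\|x\|\ =\ \max\left\{\|x\|_\infty,\ \sup\left\|\sum_{i=1}^k\|E_ix\|_\infty\,\varphi_{\min E_i}\right\|_Y\right\},
\end{equation*}
where the supremum ranges over $\mathcal{S}_\alpha$-admissible sequences $E_1<E_2<\cdots<E_k$, and let $(e_n)$ denote the canonical unit-vector basis.

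I then check the four hypotheses of Theorem \ref{m1'} for $\mathcal{F}=\mathcal{S}_\alpha$ and verify failure for $\mathcal{F}=\mathcal{S}_{\alpha+1}$. Quasi-greediness and $\mathcal{S}_\alpha$-suppression unconditionality both reduce, via the outer $Y$-norm, to the observation that both a greedy operator $G_m$ and a suppression $P_A$ with $A\in\mathcal{S}_\alpha$ leave the quantities $\|E_ix\|_\infty$ pointwise no larger, so the outer vector in $Y$ is dominated coordinate-wise by the original one; applying quasi-greediness of $(\varphi_n)$ in $Y$ then closes the bound. Disjoint democracy of $(e_n)$ is extended from the $\mathcal{S}_\alpha$-scale to disjoint democracy of all finite sets using the spreading property of $\mathcal{S}_\alpha$ together with disjoint democracy of $(\varphi_n)$ in $Y$, producing $\|1_A\|\leqslant C\|1_B\|$ whenever $A\cap B=\emptyset$ and $|A|\leqslant |B|$.

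The core step is the failure of $\mathcal{S}_{\alpha+1}$-unconditionality. Choose witnesses $u_N=\sum_{i\in F_N}a_i\varphi_i$ in $Y$ with $F_N\in\mathcal{S}_1$ and a subset $F_N'\subset F_N$ such that $\|u_N-P_{F_N'}u_N\|_Y/\|u_N\|_Y\to\infty$. For each $N$, pick $\mathcal{S}_\alpha$-sets $E_i^{(N)}$ whose minima are exactly $F_N$ and whose union is $\mathcal{S}_{\alpha+1}$-admissible, and define $x_N\in c_{00}$ so that $\|E_i^{(N)}x_N\|_\infty=a_i$ while arranging the inside of each $E_i^{(N)}$ so that the outer supremum defining $\|x_N\|$ is (up to a constant) attained by the sequence $(E_i^{(N)})_i$. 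Projecting $x_N$ onto $\bigcup_{i\in F_N'}E_i^{(N)}\in\mathcal{S}_{\alpha+1}$ then transports the failure of $\mathcal{S}_1$-unconditionality from $Y$ up into $X_{\alpha,\infty}$.

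The main obstacle is the calibration in this last step: I need the outer supremum in the definition of $\|\cdot\|$ to be genuinely attained by the specific admissible sequence $(E_i^{(N)})_i$ rather than by some competing sequence that would wash out the $Y$-witness. This is a standard phenomenon for Tsirelson-type norms but must be controlled carefully; it may force inserting a multiplier $\theta<1$ on the outer supremum, or iterating the construction, in order to suppress competing admissible sequences and ensure the modulus of unconditionality failure transfers cleanly. The limit-ordinal case is handled by the fixed sequence $\alpha_m+1\nearrow\alpha$ already built into the definition of $\mathcal{S}_\alpha$. Once all four Theorem \ref{m1'} conditions are verified for $\mathcal{S}_\alpha$ and failure at $\mathcal{S}_{\alpha+1}$ is established, the final conclusion ($\mathcal{S}_\alpha$-greedy but not $\mathcal{S}_{\alpha+1}$-greedy) follows immediately.
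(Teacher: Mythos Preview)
Your proposal contains a genuine gap in the verification of quasi-greediness (and, by the same mechanism, of $\mathcal{S}_\alpha$-suppression unconditionality). You argue that applying $I-G_m$ or $I-P_A$ to $x$ produces an outer vector $\sum_i c_i\,\varphi_{\min E_i}$ in $Y$ with $0\leqslant c_i\leqslant \|E_ix\|_\infty$ coordinate-wise, and then invoke ``quasi-greediness of $(\varphi_n)$ in $Y$'' to bound its norm. But quasi-greediness does not give coordinate-wise monotonicity: there is no reason that $0\leqslant a_i\leqslant b_i$ should imply $\|\sum a_i\varphi_i\|_Y\leqslant C\|\sum b_i\varphi_i\|_Y$ unless $(\varphi_n)$ is (lattice) unconditional --- and the whole point of taking the $(0,\infty)$-example for $Y$ is precisely that $(\varphi_n)$ is \emph{not} unconditional, indeed not even $\mathcal{S}_1$-unconditional. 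The truncation bound of Lemma~\ref{bto}(2) only caps large coefficients at a common threshold; it does not handle arbitrary pointwise decreases. So the step ``applying quasi-greediness of $(\varphi_n)$ in $Y$ then closes the bound'' fails as written, and there is no obvious repair compatible with your choice of $Y$.

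There is a second gap that you yourself flag: the calibration ensuring the supremum defining $\|x_N\|$ is essentially attained at the chosen admissible sequence $(E_i^{(N)})$. Saying ``it may force inserting a multiplier $\theta<1$ or iterating the construction'' is not a proof; Tsirelson-type norms with a contractive $\theta$ typically yield unconditional bases, which would destroy the very failure of $\mathcal{S}_{\alpha+1}$-unconditionality you are trying to manufacture.

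For comparison, the paper avoids any auxiliary space $Y$ entirely. It sets $X_{\alpha,\infty}=J(X[\mathcal{F}_\alpha])$, the Jamesification of the combinatorial space on a slight enlargement $\mathcal{F}_\alpha\supset\mathcal{S}_\alpha$, and proves $1$-suppression quasi-greediness directly by an interval-splitting combinatorial argument on the norming intervals. Democracy and $\mathcal{S}_\alpha$-unconditionality are then immediate from the form of the norm. Failure of $\mathcal{S}_{\alpha+1}$-unconditionality is obtained via the repeated averages hierarchy: one exhibits positive convex coefficients supported on a maximal $\mathcal{S}_{\alpha+1}$-interval whose mass on every $\mathcal{S}_\alpha$-set is arbitrarily small, and compares the norms of the corresponding vector with and without alternating signs.
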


\begin{thm}
Fix a nonzero $\alpha\in \omega_1$. There is a space $X_{\infty, \alpha}$ with a basis $(e_n)$ that is $(\infty, \alpha)$-quasi-greedy. Hence, $X_{\infty, \alpha}$ is $\mathcal{S}_\alpha$-greedy but not $\mathcal{S}_{\alpha+1}$-greedy. 
\end{thm}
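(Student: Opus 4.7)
The plan is to construct a Banach space $X_{\infty,\alpha}$ with a $1$-unconditional basis $(e_n)$ that is $\mathcal{S}_\alpha$-disjoint democratic but not $\mathcal{S}_{\alpha+1}$-disjoint democratic. Since $1$-unconditionality implies quasi-greediness (and $\mathcal{S}_\beta$-unconditionality for every $\beta$), Theorem~\ref{m1'} will then deliver the desired conclusion: $X_{\infty,\alpha}$ is $\mathcal{S}_\alpha$-greedy but not $\mathcal{S}_{\alpha+1}$-greedy.

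For the construction, I would start with a Schreier-type norm on $c_{00}$ built from $\mathcal{S}_{\alpha+1}$. A natural candidate is
\[
\|x\|\ =\ \sup_{E \in \mathcal{S}_{\alpha+1}}\sum_{n \in E}|e_n^*(x)|,
\]
which is manifestly $1$-unconditional, with $\|1_A\| = |A|$ whenever $A \in \mathcal{S}_{\alpha+1}$ and $\|1_B\|$ equal to the size of the largest $\mathcal{S}_{\alpha+1}$-admissible subset of $B$ in general. If this pure Schreier norm is insufficient (Schreier norms tend to be democratic because of the strong spreading property of $\mathcal{S}_{\alpha+1}$), I would modify it by pairing it with a Tsirelson-style iterative formula, or by restricting the sup to an admissible family of partitions that better isolates the structural gap $\mathcal{S}_{\alpha+1}\setminus\mathcal{S}_\alpha$, while still producing a $1$-unconditional norm.

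To verify $\mathcal{S}_\alpha$-disjoint democracy, I would fix $A \in \mathcal{S}_\alpha$ with $|A| = n$ and disjoint $B$ with $|B| \geq n$, then split into cases. If $B$ has few elements below $\min A$, a right-shift of $A$ inside $B$ (using spreading of $\mathcal{S}_\alpha$) produces an $\mathcal{S}_\alpha$-admissible, hence $\mathcal{S}_{\alpha+1}$-admissible, subset of comparable size, so $\|1_B\| \gtrsim \|1_A\|$. If $B$ has many elements below $\min A$, then by Proposition~\ref{k2} and the recursive structure of $\mathcal{S}_{\alpha+1}$, $B$ still contains a large $\mathcal{S}_{\alpha+1}$-admissible subset near its smallest surviving element, again giving $\|1_B\| \gtrsim \|1_A\|$ up to a uniform constant. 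For failure of $\mathcal{S}_{\alpha+1}$-disjoint democracy, I would exhibit pairs $(A_k, B_k)$ with $A_k \in \mathcal{S}_{\alpha+1}\setminus\mathcal{S}_\alpha$ constructed via the defining recursion so that $|A_k|/\min A_k \to \infty$, paired with disjoint $B_k$ chosen to be ``admissibility-poor'' (for instance, initial segments of $\mathbb{N}$ of length $|A_k|$ with adjoined offsets), so that $\|1_{A_k}\|/\|1_{B_k}\|$ diverges.

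The main obstacle is calibrating the norm to satisfy both democratic conditions simultaneously: the pure Schreier norm above tends to be fully democratic precisely because the spreading property of $\mathcal{S}_{\alpha+1}$ forces $\|1_B\|$ to stay comparable to $|A|$. The delicate step is introducing enough rigidity to preserve $\mathcal{S}_\alpha$-disjoint democracy while exposing the combinatorial gap $\mathcal{S}_{\alpha+1}\setminus\mathcal{S}_\alpha$ sharply enough to break the strictly stronger $\mathcal{S}_{\alpha+1}$-disjoint democracy. This likely necessitates a mixed Tsirelson--Schreier construction rather than the pure Schreier norm, together with a careful combinatorial identification of the witnessing family $(A_k, B_k)$ that exploits the recursion defining $\mathcal{S}_{\alpha+1}$ from $\mathcal{S}_\alpha$.
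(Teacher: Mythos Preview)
Your overall strategy is exactly right: build a $1$-unconditional basis that is $\mathcal{S}_\alpha$-disjoint democratic but not $\mathcal{S}_{\alpha+1}$-disjoint democratic, and invoke Theorem~\ref{m1'}. The gap is that you do not actually have a construction. You correctly diagnose that the pure Schreier norm $\|x\|=\sup_{E\in\mathcal{S}_{\alpha+1}}\sum_{n\in E}|e_n^*(x)|$ is fully democratic (spreading forces $\|1_A\|\asymp |A|$ for every finite $A$), and your fallback suggestions---a Tsirelson-style iteration or a restricted admissible family---remain schematic. Tsirelson-type norms still have $\|1_A\|$ growing with $|A|$, so they do not produce what is needed here: a norm in which $\|1_A\|$ stays \emph{uniformly bounded} over all $A\in\mathcal{S}_\alpha$ while $\|1_F\|\to\infty$ along a sequence of sets $F\in\mathcal{S}_{\alpha+1}$. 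Your sketch for the democratic half (shifting $A$ into $B$ via spreading to get $\|1_B\|\gtrsim\|1_A\|$) is aimed at the wrong mechanism; what is actually needed is the opposite inequality, namely an upper bound on $\|1_A\|$ for $A\in\mathcal{S}_\alpha$.

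The missing idea is the repeated averages hierarchy. The paper fixes pairwise disjoint maximal $\mathcal{S}_{\alpha+1}$-sets $F_1<F_2<\cdots$ together with convex weights $(w_n)_{n\in F_i}$ (coming from Proposition~\ref{ppp1} and Lemma~\ref{RAH}) satisfying $\sum_{n\in F_i}w_n=1$ and $\sum_{n\in G}w_n<3/\min F_i$ for every $G\in\mathcal{S}_\alpha$. The norm is then
\[
\|(a_n)\|\ =\ \sup_i\Bigl\{\max_n|a_n|,\ \min F_i\cdot\sum_{n\in F_i}w_n|a_n|\Bigr\}.
\]
This is visibly $1$-unconditional and normalized. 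For $A\in\mathcal{S}_\alpha$ one gets $\min F_i\cdot\sum_{n\in A\cap F_i}w_n<3$ for every $i$, hence $\|1_A\|<3$; since $\|1_B\|\geqslant 1$ for any nonempty $B$, this yields $\mathcal{S}_\alpha$-disjoint democracy with constant $3$. On the other hand $\|1_{F_i}\|=\min F_i\to\infty$, while any $\mathcal{S}_\alpha$-set $A$ disjoint from $F_i$ with $|A|\geqslant|F_i|$ has $\|1_A\|<3$, so $\mathcal{S}_{\alpha+1}$-disjoint democracy fails. The whole weight of the argument lies in the existence of the weights $(w_n)$ with the $\mathcal{S}_\alpha$-small-sum property, which is precisely what the repeated averages hierarchy provides and which your proposal does not supply.
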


\begin{rek}\normalfont
The bases we construct in Theorem \ref{minfinity} give new examples of conditional quasi-greedy bases. Furthermore, these bases are $1$-suppression quasi-greedy.  
\end{rek}

\subsection{Proof of Theorem \ref{zerozero}}

\subsubsection{A $(0,0)$-quasi-greedy basis}\label{1st}

We modify an example by Konyagin and Temlyakov \cite{KT1} who gave a conditional basis that is quasi-greedy. We shall construct a quasi-greedy basis that is neither $\mathcal{S}_1$-disjoint democratic nor $\mathcal{S}_1$-unconditional. 
For each $N\in\mathbb{N}$, let $X_N$ be the $(2N-1)$-dimensional space that is the completion of $c_{00}$ under the norm: for $x = (a_i)_i$,
$$\|(a_i)_i\|\ =\ \max\left\{\left(\sum_{i=1}^{2N-1}|a_i|^2\right)^{1/2}, \sup_{N\leqslant m\leqslant 2N-1}\left|\sum_{i=N}^m \frac{1}{\sqrt{i-N+1}}a_i\right|\right\}.$$
Let $X = (\oplus_{N=1}^\infty X_N)_{c_0}$. Let $\mathcal{B}$ be the canonical basis of $X$.

\begin{thm}
The basis $\mathcal{B}$ is $(0,0)$-quasi-greedy. 
\end{thm}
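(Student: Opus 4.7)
The plan is to unpack what is required. Since $\mathcal{S}_0$ consists only of the empty set and singletons, any semi-normalized basis is automatically $\mathcal{S}_0$-unconditional and $\mathcal{S}_0$-disjoint democratic, with constants depending only on $c_1$ and $c_2$. So the task reduces to showing three things about $\mathcal{B}$: (i) $\mathcal{B}$ is quasi-greedy; (ii) $\mathcal{B}$ is not $\mathcal{S}_1$-unconditional; and (iii) $\mathcal{B}$ is not $\mathcal{S}_1$-disjoint democratic.

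For (i), I would first verify that the basis of each block $X_N$ is quasi-greedy with a constant independent of $N$, following the classical Konyagin--Temlyakov template. Given $x=\sum a_i e^N_i\in X_N$ with greedy set $\Lambda$ and threshold $t=\min_{i\in\Lambda}|a_i|$, the $\ell_2$ component of $\|G_\Lambda(x)\|_{X_N}$ is immediately controlled by $\|x\|_{\ell_2}\leqslant\|x\|_{X_N}$, while the weighted partial-sum component is handled by writing
\[\sum_{i\in\Lambda\cap[N,m]} \frac{a_i}{\sqrt{i-N+1}} \;=\; \sum_{i=N}^{m} \frac{a_i}{\sqrt{i-N+1}} \;-\; \sum_{i\in[N,m]\setminus\Lambda} \frac{a_i}{\sqrt{i-N+1}},\]
where the first piece is bounded by $\|x\|_{X_N}$ and the second is handled using $|a_i|\leqslant t$, $|\Lambda|t^2\leqslant\|x\|_{\ell_2}^2$, and an appropriately careful (e.g.\ dyadic) splitting of the below-threshold coordinates. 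One then lifts to $X=(\oplus X_N)_{c_0}$: any greedy operator $G_m$ on $X$ corresponds within each block $X_N$ to a projection onto coordinates of $x_N$ whose modulus lies above the common greedy threshold, so since $\|G_m(x)\|_X=\sup_N \|(G_m(x))_N\|_{X_N}$, the uniform per-block bound yields quasi-greediness of $\mathcal{B}$.

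For (ii), I test the basis on the alternating vector
\[y_N \;:=\; \sum_{j=1}^N (-1)^{j-1} j^{-1/2}\, e^N_{N+j-1}\in X_N.\]
The $\ell_2$ piece equals $\bigl(\sum_{j=1}^N 1/j\bigr)^{1/2}\asymp\sqrt{\log N}$, while the weighted partial sums reduce to partial sums of the series $\sum(-1)^{j-1}/j$, hence are uniformly bounded. Thus $\|y_N\|_X\asymp\sqrt{\log N}$. Taking $A_N$ to be the positions of the negative coefficients (indices with $j$ even), and using the natural ordering of $\mathcal{B}$ (list $X_1$'s basis, then $X_2$'s, and so on), the positions of $X_N$-basis vectors lie in $[(N-1)^2+1,\,N^2]$, so $\min A_N\geqslant N^2-N+2\geqslant|A_N|$ and $A_N\in\mathcal{S}_1$. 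Killing the cancellation forces $\|y_N-P_{A_N}(y_N)\|_X\asymp\log N$, so the ratio $\|y_N-P_{A_N}(y_N)\|/\|y_N\|\asymp\sqrt{\log N}\to\infty$, ruling out $\mathcal{S}_1$-unconditionality. For (iii), take $A_N=\{e^N_N,\ldots,e^N_{2N-1}\}\subset X_N$: then $|A_N|=N$, $A_N\in\mathcal{S}_1$ by the same position count, and $\|1_{A_N}\|_X=\|1_{A_N}\|_{X_N}\asymp\sqrt{N}$ since the weighted sum $\sum_{j=1}^N j^{-1/2}$ dominates the $\ell_2$ contribution by a constant factor. For the comparison set, pick one basis vector $e^{M_k}_1$ from each of $N$ distinct blocks $X_{M_k}$ with $M_k\neq N$; this yields $|B_N|=N$, $B_N\cap A_N=\emptyset$, and crucially
\[\|1_{B_N}\|_X \;=\; \sup_M \|1_{B_N\cap X_M}\|_{X_M} \;=\; \max_k \|e^{M_k}_1\|_{X_{M_k}} \;=\; 1,\]
because the outer norm is $c_0$-type. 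Thus $\|1_{A_N}\|_X/\|1_{B_N}\|_X\asymp\sqrt{N}$ is unbounded.

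The main obstacle is step (i): obtaining the weighted-partial-sum bound uniformly in $N$. A direct Cauchy--Schwarz against the whole weight sequence leaks a factor of $(\sum_{j=1}^N 1/j)^{1/2}\asymp\sqrt{\log N}$, so the KT argument must be executed with care by separating coordinates above and below the greedy threshold, controlling the below-threshold sum dyadically, and then this estimate has to be ported robustly through the $c_0$ direct sum so that the single global greedy threshold of $x\in X$ interacts correctly with each block simultaneously.
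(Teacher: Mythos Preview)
Your plan is correct and mirrors the paper's proof: the paper also verifies quasi-greediness block-by-block via the same split $\sum_{i\in\Lambda\cap[N,m]}=\sum_{i=N}^m-\sum_{i\notin\Lambda}$ and a threshold cut at $L=\lfloor t^{-2}\rfloor$ (using H\"older rather than a dyadic decomposition for the tail, obtaining the explicit constant $3+\sqrt{2}$), then uses the same test vectors $\sum_{j}(\pm 1)j^{-1/2}f^N_{N+j-1}$ for failure of $\mathcal{S}_1$-unconditionality and $1_{\{f^N_N,\dots,f^N_{2N-1}\}}$ versus $\sum_k e^{M_k}_1$ for failure of $\mathcal{S}_1$-disjoint democracy. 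Your only addition to the paper's argument is the explicit check that the relevant sets lie in $\mathcal{S}_1$ under the global ordering, which the paper leaves implicit.
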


\begin{proof}
First, we show that $\mathcal{B}$ is not $\mathcal{S}_1$-unconditional. For each $X_N$, let $(f^N_i)_{i=1}^{2N-1}$ be the canonical basis of $X_N$ (that also belongs to $\mathcal{B}$). We have
$$\left\|\sum_{i=N}^{2N-1}\frac{1}{\sqrt{i-N+1}}f^N_i\right\|\ =\ \sum_{i=1}^N\frac{1}{i},\mbox{ while }\left\|\sum_{i=N}^{2N-1}\frac{(-1)^i}{\sqrt{i-N+1}}f^N_i\right\|\ =\ \left(\sum_{i=1}^N\frac{1}{i}\right)^{1/2}.$$
As $N\rightarrow\infty$, $\left\|\sum_{i=N}^{2N-1}\frac{1}{\sqrt{i-N+1}}f^N_i\right\|/\left\|\sum_{i=N}^{2N-1}\frac{(-1)^i}{\sqrt{i-N+1}}f^N_i\right\|\rightarrow\infty$; hence, $\mathcal{B}$ is not $\mathcal{S}_1$-unconditional. 

Next, we show that $\mathcal{B}$ is not $\mathcal{S}_1$-disjoint democratic. We have
$$\left\|\sum_{i=N}^{2N-1}f^N_i\right\|\ =\ \sum_{i=1}^{N}\frac{1}{\sqrt{i}}, \mbox{ while }\left\|\sum_{i=N+1}^{2N} f^i_1\right\|\ =\ 1.$$
Therefore, $\mathcal{B}$ is not $\mathcal{S}_1$-disjoint democratic. 

Finally, we prove that $\mathcal{B}$ is quasi-greedy. To do so, we need only to show that for each $N$, the basis $(f^N_i)_{i=1}^{2N-1}$ has the same quasi-greedy constant of $3 + \sqrt{2}$. Let $(a_i)_{i=1}^{2N-1}\in X_N$, where $\|(a_i)_i\|\leqslant 1$.
It suffices to prove that 
$$\left|\sum_{i\in \Lambda}\frac{1}{\sqrt{i-N+1}}a_i\right|\ \leqslant\ 3+\sqrt{2},$$
for all $\varepsilon > 0$, for all $M\in [N, 2N-1]$, and $\Lambda = \{N\leqslant i\leqslant M: |a_i|>\varepsilon\}$. Since $\|(a_i)_i\|\leqslant 1$, we know that $|a_i|\leqslant 1$ and so, we can assume that $0 < \varepsilon < 1$. Set $L = \lfloor \varepsilon^{-2}\rfloor$ to have $1/2 \leqslant \varepsilon^2 L\leqslant 1$. We proceed by case analysis.

Case 1: $M-N+1\leqslant L$. We have
\begin{align*}
    \left|\sum_{i\in \Lambda}\frac{a_i}{\sqrt{i-N+1}}\right|&\ \leqslant\ \left|\sum_{N\leqslant i\leqslant M}\frac{a_i}{\sqrt{i-N+1}}\right| + \left|\sum_{\substack{N\leqslant i\leqslant M\\|a_i| \leqslant \varepsilon}}\frac{a_i}{\sqrt{i-N+1}}\right|\\
    &\ \leqslant\ 1 + \varepsilon \sum_{i=N}^{M}\frac{1}{\sqrt{i-N+1}}\\
    &\ \leqslant\ 1 + \varepsilon \sum_{i=1}^{M-N+1}\frac{1}{\sqrt{i}}\\
    &\ \leqslant\ 1 + 2\varepsilon\sqrt{M-N+1}\ \leqslant\ 1 + 2\varepsilon \sqrt{L}\ \leqslant\ 3.
\end{align*}

Case 2: $M-N+1 > L$. We have
$$
\left|\sum_{i\in \Lambda}\frac{a_i}{\sqrt{i-N+1}}\right|\ =\ \left|\sum_{\substack{N\leqslant i\leqslant N+L-1\\ |a_i|> \varepsilon}}\frac{a_i}{\sqrt{i-N+1}}\right| + \left|\sum_{\substack{N+L\leqslant i\leqslant M\\|a_i| > \varepsilon}}\frac{a_i}{\sqrt{i-N+1}}\right|.
$$
By above, 
$$ \left|\sum_{\substack{N\leqslant i\leqslant N+L-1\\ |a_i|> \varepsilon}}\frac{a_i}{\sqrt{i-N+1}}\right|\ \leqslant\ 3.$$
Furthermore, we have
\begin{align*}
    \left|\sum_{\substack{N+L\leqslant i\leqslant M\\|a_i| > \varepsilon}}\frac{a_i}{\sqrt{i-N+1}}\right|&\ \leqslant\ \left(\sum_{N+L\leqslant i\leqslant M}\frac{1}{(i-N+1)^{3/2}}\right)^{1/3}\left(\sum_{\substack{N+L\leqslant i\leqslant M\\|a_i|>\varepsilon}}|a_i|^{3/2}\right)^{2/3}\\
    &\ \leqslant\ \left(\sum_{i=L+1}^\infty \frac{1}{i^{3/2}}\right)^{1/3}\left(\sum_{\substack{N+L\leqslant i\leqslant M\\|a_i| > \varepsilon}}|a_i|^{3/2}\sqrt{\frac{|a_i|}{\varepsilon}}\right)^{2/3}\\
    &\ \leqslant\ 2^{1/3}L^{-1/6}\varepsilon^{-1/3}\ \leqslant \ \sqrt{2}.
\end{align*}
This completes our proof. 
\end{proof}

\subsubsection{An $(\infty, 0)$-quasi-greedy basis}
Define $$\mathcal{F} \ :=\ \{A\subset\mathbb{N}: A\mbox{ is finite and does not contain even integers}\}.$$
Let $\mathbb{X}$ be the completion of $c_{00}$ with respect to the following norm: for $x = (x_1, x_2, \ldots)$,  let 
$$\|x\|\ :=\ \left(\sum_{2|i}|x_i|\right) + \left(\sum_{2\nmid i}|x_i|^2\right)^{1/2}.$$
Let $\mathcal{B}$ be the canonical basis. Clearly, $\mathcal{B}$ is $1$-unconditional. Note that $\mathcal{B}$ is not $\mathcal{S}_1$-disjoint democratic. To see this, fix $N\in\mathbb{N}$ and choose $A = \{1, 3, 5, \ldots, 2N-1\}$ and $B = \{2N, 2N+2, 2N+4, \ldots, 4N-2\}\in\mathcal{S}_1$. Then $\|1_A\| = \sqrt{N}$ while $\|1_B\| = N$. Hence, $\|1_B\|/\|1_A\|\rightarrow \infty$ as $N\rightarrow \infty$. It follows that $\mathcal{B}$ is not $\mathcal{S}_1$-disjoint democratic.

\subsubsection{A $(0,\infty)$-quasi-greedy basis}
We define the spaces $X_N$ as in Subsection \ref{1st}: for each $N\in\mathbb{N}$, let $X_N$ be the $(2N-1)$-dimensional space that is the completion of $c_{00}$ under the norm: for $x = (a_i)_i$,
$$\|(a_i)_i\|\ =\ \max\left\{\left(\sum_{i=1}^{2N-1}|a_i|^2\right)^{1/2}, \sup_{N\leqslant m\leqslant 2N-1}\left|\sum_{i=N}^m \frac{1}{\sqrt{i-N+1}}a_i\right|\right\}.$$
Let $X = (\oplus_{N=1}^\infty X_N)_{\ell_2}$. Let $\mathcal{B}$ be the canonical basis of $X$. Using the same argument as in Subsection \ref{1st}, we know that $\mathcal{B}$ is quasi-greedy and is not $\mathcal{S}_1$-unconditional. We show that $\mathcal{B}$ is democratic. Let $A\subset\mathcal{B}$ be a nonempty finite set. Write $A = \bigcup_{N=1}^\infty A_N$, where $A_N$ is the intersection of $A$ and the canonical basis of $X_N$. We have
$$\left\|\sum_{e\in A} e\right\|\ =\ \left(\sum_{N=1}^\infty\left\|\sum_{e\in A_N} e\right\|^2\right)^{1/2}\ \geqslant\ \left(\sum_{N=1}^\infty|A_N|\right)^{1/2}\ =\ |A|^{1/2}.$$
On the other hand, for each $N$, 
$$\left\|\sum_{e\in A_N}e\right\|\ \leqslant\ \sum_{i=1}^{|A_N|}\frac{1}{\sqrt{i}}\ \leqslant\ 2\sqrt{|A_N|}.$$
Therefore,
$$\left\|\sum_{e\in A} e\right\|\ =\ \left(\sum_{N=1}^\infty\left\|\sum_{e\in A_N} e\right\|^2\right)^{1/2}\ \leqslant\ 2\left(\sum_{N=1}^\infty |A_N|\right)^{1/2}\ =\ 2|A|^{1/2}.$$
We have shown that $|A|^{1/2}\leqslant \|\sum_{e\in A}e\|\leqslant 2|A|^{1/2}$, so $\mathcal{B}$ is democratic. 

\subsection{An $(\alpha,\infty)$-quasi-greedy basis} \label{rt}

Fix a nonzero $\alpha\in \omega_1$ and consider the following collection subsets related to $\mathcal{S}_\alpha$
\begin{equation*}
\mathcal{F}_{\alpha} = \{\cup^{r}_{i=1} E_i:r/2\leqslant E_1<E_2<\cdots <E_{r} \mbox{ are in } \mathcal{S}_{\alpha-1}\}.
\end{equation*}
The family $\mathcal{F}_1$ (among others) recently appeared in \cite{BCF}. 

\begin{lem}\label{ls}
Let $F \in \mathcal{F}_{\alpha}$. Then $F$ can be written as the union of two disjoint sets in $\mathcal{S}_\alpha$.
\end{lem}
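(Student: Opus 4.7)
The plan is to decompose $F$ by splitting the defining sequence of blocks into two contiguous halves. Given $F\in\mathcal{F}_\alpha$, the definition supplies a representation $F=\bigcup_{i=1}^{r}E_i$ with $r/2\leqslant E_1<E_2<\cdots<E_r$ and each $E_i\in\mathcal{S}_{\alpha-1}$. Setting $k=\lceil r/2\rceil$, I would take
$$F_1\ =\ \bigcup_{i=1}^{k}E_i\qquad\text{and}\qquad F_2\ =\ \bigcup_{i=k+1}^{r}E_i,$$
which is evidently a disjoint decomposition of $F$, so only membership of each piece in $\mathcal{S}_\alpha$ remains to be checked.

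The key step is then to verify $F_1,F_2\in\mathcal{S}_\alpha$ using the recursive definition of $\mathcal{S}_\alpha$ recalled in the excerpt. For $F_1$, the number of constituent blocks is $k=\lceil r/2\rceil$. The hypothesis $r/2\leqslant E_1$ means that $r/2$ does not exceed the minimum element of $E_1$; since that minimum is a positive integer, this inequality upgrades to $k=\lceil r/2\rceil\leqslant\min E_1$. Combined with $E_1<\cdots<E_k$ being successive blocks in $\mathcal{S}_{\alpha-1}$, this is precisely the defining condition $k\leqslant E_1<\cdots<E_k$ for $F_1\in\mathcal{S}_\alpha$. For $F_2$, the number of blocks is $r-k=\lfloor r/2\rfloor\leqslant\lceil r/2\rceil\leqslant\min E_1<\min E_{k+1}$, so the same defining clause places $F_2$ in $\mathcal{S}_\alpha$.

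The only (minor) obstacle is the bookkeeping when $r$ is odd, where the two halves are not of equal size; this is resolved precisely by the integrality observation that promotes $r/2\leqslant E_1$ to $\lceil r/2\rceil\leqslant E_1$, which is exactly what is needed to fit $F_1$ inside $\mathcal{S}_\alpha$. No further case analysis or induction on $\alpha$ is required, since the decomposition works uniformly once the defining representation of $F\in\mathcal{F}_\alpha$ is fixed.
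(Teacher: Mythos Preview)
Your proof is correct and follows essentially the same approach as the paper: split the sequence of blocks at index $k=\lceil r/2\rceil$ and use the integrality upgrade $r/2\leqslant\min E_1\Rightarrow\lceil r/2\rceil\leqslant\min E_1$ to place each half in $\mathcal{S}_\alpha$. The paper's version additionally discards any empty $E_i$ first and separates out the case where all remaining blocks already form a single $\mathcal{S}_\alpha$ set, but these are cosmetic differences rather than a different argument.
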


\begin{proof}
Write $F = \cup_{i=1}^r E_i$, where $r/2 \leqslant E_1 < E_2 < \cdots < E_{r}$ and sets $E_i\in \mathcal{S}_{\alpha-1}$. 
Discard all the empty $E_i$ and re-number to have nonempty sets $E'_i$ satisfying $r/2 \leqslant E_1' < E'_2 < \cdots < E'_\ell$ for some $\ell\leqslant r$.  Let $s = \lceil r/2\rceil$.

Case 1: $s\geqslant \ell$. Then $s\leqslant E_1' < E_2' < \cdots < E'_\ell$ implies that $F = \cup_{i = 1}^\ell E_i'\in \mathcal{S}_\alpha$. We are done.

Case 2: $s < \ell$. Let $F_1 = \cup_{i=1}^s E'_i$, which is in $S_\alpha$ due to Case 1. Note that 
$$s+1 \ \leqslant\ E'_{s+1}  \ <\ \cdots \ <\ E'_\ell;$$
furthermore, $\ell-s \leqslant r-s \leqslant s+1$. Therefore, $F_2 := \cup_{i=s+1}^\ell E'_i\in \mathcal{S}_\alpha$. 
Since $F = F_1\cup F_2$, we are done. 
\end{proof}

Clearly, $\mathcal{S}_{\alpha}\subset\mathcal{F}_{\alpha}$. Let $X_{\alpha,\infty}$ be the completion of $c_{00}$ under the following norm: for $(a_i)\in c_{00}$,
$$\|(a_i)\|_{X_{\alpha,\infty}} :=\ \sup\left\{\sum_{j=1}^d \left|\sum_{i\in I_j}a_i\right|: I_1 < I_2 < \cdots < I_d \mbox{ intervals}, (\min I_j)_{j=1}^d\in \mathcal{F}_{\alpha}\right\}.$$
The space $X_{\alpha,\infty}$ above is the Jamesfication of the combinatorial space $X[\mathcal{F}_\alpha]$ (see \cite{AMS, BHO}) and is denoted by $J(X[\mathcal{F}_\alpha])$. 

\begin{thm}
The standard basis $(e_n)$ for the space $X_{\alpha,\infty}$ is $(\alpha,\infty)$-quasi-greedy.
\end{thm}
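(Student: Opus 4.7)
The plan is to verify the four defining properties of $(\alpha,\infty)$-quasi-greedy in turn: $1$-disjoint democracy, $\mathcal{S}_\alpha$-unconditionality, the failure of $\mathcal{S}_{\alpha+1}$-unconditionality, and quasi-greediness. The cornerstone observation is that $\|1_A\|_{X_{\alpha,\infty}} = |A|$ for every finite $A \subset \mathbb{N}$: the upper bound follows from the disjointness of the intervals in any admissible collection, and the lower bound is witnessed by the single-interval collection $I_1 = [\min A, \max A]$, which is admissible since the singleton $\{\min A\}$ lies in $\mathcal{F}_\alpha$ (take $r=1$, $E_1 = \{\min A\} \in \mathcal{S}_{\alpha-1}$, with $1/2 \leq \min A$). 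In particular, $1$-democracy and hence $1$-disjoint democracy are immediate.

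For $\mathcal{S}_\alpha$-unconditionality I would use the inclusion $\mathcal{S}_\alpha \subseteq \mathcal{F}_\alpha$: for any $E \in \mathcal{S}_\alpha$ and any $x$, the singleton collection $(\{e\})_{e \in E}$ is admissible, which yields the $\ell_1$-estimate $\sum_{e \in E}|x_e| \leq \|x\|$. Every admissible sum in the definition of $\|P_E(x)\|$ is dominated termwise by this $\ell_1$-norm via the triangle inequality, so $\|P_E(x)\| \leq \|x\|$ and hence $\|x - P_E(x)\| \leq 2\|x\|$. To exhibit the failure of $\mathcal{S}_{\alpha+1}$-unconditionality, I would construct for each $N$ a set $E_N \in \mathcal{S}_{\alpha+1}$ of size exceeding any $\mathcal{F}_\alpha$-admissible structure controllable by $\min E_N$, together with a vector $x_N$ whose James-type norm is depressed by sign-cancellations enforced by the $\mathcal{F}_\alpha$-admissibility constraint but whose restriction $P_{E_N}(x_N)$ recovers enough $\ell_1$-mass on $E_N$ (for instance via the single-interval evaluation that gave $\|1_{E_N}\| = |E_N|$) to dominate $\|x_N\|$ by an arbitrarily large factor. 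The key point is that $\mathcal{S}_{\alpha+1}$ contains sets whose combinatorial growth outstrips the capacity of $\mathcal{F}_\alpha$-admissible collections, so the projection effectively repairs cancellations that $\mathcal{F}_\alpha$-admissibility had forced on $\|x_N\|$.

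The main obstacle is verifying quasi-greediness, strengthened to $1$-suppression quasi-greediness as the remark following Theorem \ref{minfinity} records. For $x$, a greedy set $A$, and any admissible $(I_j')$ used to norm $x - P_A(x)$, the natural strategy is the classical James-space refinement: decompose each $I_j' \setminus A$ into its maximal sub-intervals $J_{j,\ell}$ and apply the triangle inequality to reduce to bounding $\sum_{j,\ell}|\sum_{i \in J_{j,\ell}} x_i|$. In the unrestricted James space this is automatically bounded by $\|x\|$ because every refinement remains admissible; in $X_{\alpha,\infty}$ the delicate issue is certifying that the refined min-set still lies in $\mathcal{F}_\alpha$. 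This is where the $r/2 \leq E_1$ slack in the definition of $\mathcal{F}_\alpha$ (versus the strict $r \leq E_1$ of $\mathcal{S}_\alpha$) becomes essential: it doubles the admissible interval count for a given minimum, providing room to absorb the extra mins introduced when splitting intervals around elements of $A$. Combined with the hereditary and spreading properties of $\mathcal{F}_\alpha$ and with Lemma \ref{ls} (which writes any $\mathcal{F}_\alpha$-set as a union of two $\mathcal{S}_\alpha$-sets), a careful bookkeeping argument — possibly processing the refined collection in two passes corresponding to the two $\mathcal{S}_\alpha$-halves of Lemma \ref{ls} — should certify admissibility and close the estimate.
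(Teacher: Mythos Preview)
Your argument for democracy, $\|1_A\|_{X_{\alpha,\infty}}=|A|$ via the single interval $[\min A,\max A]$, is correct and in fact cleaner than the paper's; the $\mathcal{S}_\alpha$-unconditionality step is essentially the paper's as well. The genuine problems lie in the remaining two items.

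\textbf{Quasi-greediness.} Your all-at-once refinement is the wrong mechanism. Removing a greedy set $A$ and splitting each $I_j'\setminus A$ into maximal subintervals can introduce up to $|A|$ new interval minima, and the $r/2\leqslant E_1$ slack in $\mathcal{F}_\alpha$ only buys you a \emph{doubling} of the interval count relative to the current minimum --- it does not absorb an additive $|A|$ that can be arbitrarily large compared to $d$ and to $\min I_1'$. The paper avoids this by proving $1$-suppression quasi-greediness \emph{inductively, one coordinate at a time}: if $|a_N|=\|x\|_\infty$, show $\|x-a_Ne_N\|\leqslant\|x\|$. For an admissible $(I_j)_{j=1}^d$ that would witness otherwise, the interval $I_k\ni N$ is split into $[\min I_k,N-1]$, $\{N\}$, $[N+1,\max I_k]$, and simultaneously $\min I_1$ is discarded so that the new first minimum moves up by at least $1$. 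The net effect is $d\to d+2$ minima while the least minimum increases by $1$; that is exactly what $\mathcal{F}_\alpha$-admissibility permits, and $|a_N|\geqslant|a_{\min I_1}|$ recovers the lost term. Your Lemma~\ref{ls} two-pass idea does not substitute for this, because the obstruction is additive in $|A|$, not multiplicative.

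\textbf{Failure of $\mathcal{S}_{\alpha+1}$-unconditionality.} Your description here is a wish, not a construction: you need a vector supported on an $\mathcal{S}_{\alpha+1}$-set whose alternating-sign version has arbitrarily small $X_{\alpha,\infty}$-norm while the unsigned version has norm $1$. Naive choices like $\pm 1$ on an $\mathcal{S}_{\alpha+1}$-interval do not work, since then each $|\sum_{i\in I_j}(-1)^i|\leqslant 1$ and the norm is only bounded by the size of the admissible min-set, which can itself be large. The paper's tool is the repeated averages hierarchy (Lemma~\ref{RAH}): coefficients $(a_k^\alpha)$ with $\ell_1$-norm $1$, supported on a maximal $\mathcal{S}_{\alpha+1}$-interval, monotone, and satisfying $\sum_{k\in G}a_k^\alpha<\varepsilon$ for \emph{every} $G\in\mathcal{S}_\alpha$. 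Monotonicity gives $|\sum_{k\in I_j}(-1)^ka_k^\alpha|\leqslant 2a^\alpha_{\min I_j}$, and then Lemma~\ref{ls} (writing the $\mathcal{F}_\alpha$-min-set as two $\mathcal{S}_\alpha$-sets) together with the $\varepsilon$-bound forces $\|\sum(-1)^ka_k^\alpha e_k\|<4\varepsilon$, while $\|\sum a_k^\alpha e_k\|=1$. Without this hierarchy you have no way to manufacture the required cancellation uniformly over all $\mathcal{F}_\alpha$-admissible families.
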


We prove the above theorem through the following propositions. Let us start with the easiest one.

\begin{prop}\label{pp2}
 The basis $(e_n)$ is democratic and  $\mathcal{F}_\alpha$-unconditional, and thus $\mathcal{S}_\alpha$-unconditional.

\end{prop}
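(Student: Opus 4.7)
The plan is to verify three properties in turn: $1$-democracy via a direct computation of $\|1_A\|$, $\mathcal{F}_\alpha$-suppression unconditionality via a combinatorial interval-splitting argument, and then $\mathcal{S}_\alpha$-suppression unconditionality as an immediate consequence of the inclusion $\mathcal{S}_\alpha \subseteq \mathcal{F}_\alpha$.

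For democracy, I would compute $\|1_A\|$ exactly and show that it equals $|A|$ for every finite nonempty $A \subset \mathbb{N}$. For any admissible interval family $I_1 < \cdots < I_d$, the $I_j$ are pairwise disjoint, so $\sum_j |I_j \cap A| \leqslant |A|$. Conversely, the single interval $I_1 = [1, \max A]$ with $d=1$ is admissible: $\{1\}$ is a singleton, hence lies in every $\mathcal{S}_\beta$, and belongs to $\mathcal{F}_\alpha$ by taking $r=1$ so that the defining constraint $r/2 \leqslant \min E_1$ reduces to $1/2 \leqslant 1$. This choice attains $|A|$, giving $\|1_A\| = |A|$ and hence $1$-democracy.

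For $\mathcal{F}_\alpha$-unconditionality, fix $A \in \mathcal{F}_\alpha$ and $x = \sum_i a_i e_i$, and set $y = x - P_A(x)$. Given any admissible $I_1 < \cdots < I_d$ with $(\min I_j) \in \mathcal{F}_\alpha$, removing the finitely many points of $A$ from each $I_j$ splits it into consecutive subintervals $K_{j,1} < \cdots < K_{j,p_j}$, and the triangle inequality yields
$$\sum_{j=1}^d \left|\sum_{i \in I_j} y_i\right| \ \leqslant\ \sum_{j=1}^d \sum_{k=1}^{p_j} \left|\sum_{i \in K_{j,k}} a_i\right|.$$
I would then partition the $K_{j,k}$ into two mutually exclusive and exhaustive collections: those of \emph{Type I}, where $\min K_{j,k} = \min I_j$, and those of \emph{Type II}, where $\min K_{j,k} > \min I_j$. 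The Type I minima form a subset of $(\min I_j)_j \in \mathcal{F}_\alpha$, so by heredity the Type I subintervals form an admissible interval family and contribute at most $\|x\|$. For any Type II subinterval the point $\min K_{j,k} - 1$ lies in $A$, so the Type II minima lie in $A + 1 := \{a+1 : a \in A\}$. A short verification (using that each $\mathcal{S}_\beta$ is spreading, that translates $E_i + 1$ remain in $\mathcal{S}_{\alpha-1}$, and that $r/2 \leqslant \min E_1 \leqslant \min E_1 + 1$ preserves the defining inequality for $\mathcal{F}_\alpha$) shows that $\mathcal{F}_\alpha$ is itself spreading, whence $A + 1 \in \mathcal{F}_\alpha$; the Type II subintervals are therefore also admissible, and contribute at most $\|x\|$. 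Summing the two contributions and taking the supremum over $(I_j)$ gives $\|y\| \leqslant 2\|x\|$.

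Finally, $\mathcal{S}_\alpha \subseteq \mathcal{F}_\alpha$ because $m \leqslant \min E_1$ implies $m/2 \leqslant \min E_1$, so $\mathcal{F}_\alpha$-suppression unconditionality automatically yields $\mathcal{S}_\alpha$-suppression unconditionality. The only real obstacle is the spreading property of $\mathcal{F}_\alpha$; the Type I/Type II dichotomy is the conceptual heart of the argument, but once spreading is in hand both halves of the split are admissible and the bound $2\|x\|$ drops out.
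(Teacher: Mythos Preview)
Your argument is correct, but the two halves trade places with the paper's in terms of economy. For democracy, your direct computation $\|1_A\| = |A|$ via the single admissible interval $[1,\max A]$ is sharper and cleaner than the paper's proof, which only obtains $\|1_B\| \geqslant |B|/3$ by first extracting a large $\mathcal{S}_1$-subset of $B$ and then trimming an initial segment so that the remainder lands in $\mathcal{F}_\alpha$; your observation that every singleton lies in $\mathcal{F}_\alpha$ (take $r=1$) yields democratic constant $1$ with no case analysis. For $\mathcal{F}_\alpha$-unconditionality, however, the paper takes a much shorter road: since for $F \in \mathcal{F}_\alpha$ the singletons $(\{i\})_{i\in F}$ are themselves an admissible family of intervals, one gets $\|P_F(x)\| = \sum_{i\in F}|e_i^*(x)| \leqslant \|x\|$ in one line, and suppression unconditionality with constant $2$ follows by the triangle inequality. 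Your Type I/Type II interval-splitting works --- heredity and spreading of $\mathcal{F}_\alpha$ both hold for the reasons you sketch --- but it is more machinery than the problem requires; once you grant that the singletons of $F$ are admissible, the projection bound is immediate. Both routes land at the same suppression constant; the paper's simply gets there with less.
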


\begin{proof}
It follows directly from the definition of $\|\cdot\|$ that for $x\in X$ and $F \in \mathcal{F}_\alpha$,
$$\left\|\sum_{i\in F} e_i^*(x)e_i\right\|_{X_{\alpha, \infty}}\ =\ \sum_{i \in F}|e_i^*(x)|\ \leqslant\ \|x\|_{X_{\alpha, \infty}}.$$ 
Hence, $(e_n)$ is $\mathcal{F}_\alpha$-unconditional.

 Let $A, B\subset\mathbb{N}$ with $|A| \leqslant |B|$. By Proposition \ref{k2}, there exists $N\in\mathbb{N}_{\geqslant 6}$ such that $$E\backslash \{1, \ldots, N-1\}\in \mathcal{F}_\alpha, \forall E\in \mathcal{S}_1.$$ 
 Without loss of generality, assume that $|B|\geqslant N^2$. Let $B'\subset B$ such that $|B'|\geqslant |B|/2$ and $B'\in \mathcal{S}_1\subset \mathcal{F}_1$. Form $B'' = B'\backslash \{1, \ldots, N-1\}\in \mathcal{F}_\alpha$. We have
$$\|1_B\|\ \geqslant\  |B''|\ \geqslant\ |B'| - N\ \geqslant\ |B|/3 \ \geqslant\ |A|/3\ \geqslant\ \|1_A\|/3.$$
Therefore, $(e_n)$ is democratic. 
\end{proof}

\begin{prop}\label{pp1}
The basis $(e_n)$ for the space $X_{\alpha,\infty}$ is 1-suppression quasi-greedy.
\end{prop}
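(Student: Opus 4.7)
The plan is to establish $\|x - P_A(x)\|_{X_{\alpha,\infty}} \leq \|x\|_{X_{\alpha,\infty}}$ for every $x\in X_{\alpha,\infty}$ and every greedy set $A$ of $x$. A standard iterative reduction -- peel off the elements of $A$ one at a time in decreasing order of $|e_n^*(x)|$, so at each stage the next coordinate being zeroed has maximum modulus among those still present -- brings the problem to the singleton case $A=\{i_0\}$ with $|e_{i_0}^*(x)|=\max_k|e_k^*(x)|$. Setting $z = x - e_{i_0}^*(x)\,e_{i_0}$, I fix $\varepsilon>0$ and choose a near-optimal decomposition witnessing $\|z\|$: intervals $I_1<\cdots<I_d$ with $(\min I_j)_{j=1}^d\in\mathcal{F}_\alpha$ and $\sum_j\bigl|\sum_{i\in I_j}e_i^*(z)\bigr|>\|z\|-\varepsilon$.

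If $i_0$ belongs to none of the $I_j$, the same decomposition computes the same quantity for $x$, and we are done. Otherwise $i_0\in I_{j_0}$ for a unique $j_0$; write $L=[\min I_{j_0},\,i_0-1]$ and $R=[i_0+1,\,\max I_{j_0}]$. The strategy is to replace $I_{j_0}$ by the intervals comprising $I_{j_0}\setminus\{i_0\}$. When either $L$ or $R$ is empty, $I_{j_0}\setminus\{i_0\}$ is a single interval whose minimum is either unchanged (if $i_0=\max I_{j_0}$) or a spread of $\min I_{j_0}$ (if $i_0=\min I_{j_0}$); the new minima sequence then still lies in $\mathcal{F}_\alpha$ by the hereditary and spreading properties of the family (inherited from $\mathcal{S}_{\alpha-1}$), and the contribution transfers to $x$ exactly.

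When $i_0$ is strictly interior to $I_{j_0}$, a sign analysis using the maximality $|e_{i_0}^*(x)|=\max_k|e_k^*(x)|$ separates two sub-cases. If $\sum_{i\in L}e_i^*(x)$ and $\sum_{i\in R}e_i^*(x)$ have opposite signs, then one of these sums already dominates $|\sum_{i\in I_{j_0}}e_i^*(z)|$ in absolute value, so a single-sided replacement suffices (its minimum being unchanged or a spread). If they share a sign, using both $L$ and $R$ contributes $|\sum_{i\in L}e_i^*(x)|+|\sum_{i\in R}e_i^*(x)|=|\sum_{i\in I_{j_0}}e_i^*(z)|$, but introduces one additional minimum $i_0+1$; here I would invoke Lemma~\ref{ls} -- which splits any $\mathcal{F}_\alpha$-set into two $\mathcal{S}_\alpha$-pieces -- to rearrange the minima sequence by merging an adjacent pair of intervals in a sign-preserving way, thereby absorbing $i_0+1$ without loss. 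The main obstacle is precisely this last sub-case when the original minima sequence is saturated in $\mathcal{F}_\alpha$: exhibiting a sign-preserving adjacent merge whose induced loss does not exceed the slack created by inserting $i_0+1$ will be the delicate point, requiring the combinatorial flexibility of $\mathcal{F}_\alpha$ afforded by Lemma~\ref{ls} together with the maximality of $|e_{i_0}^*(x)|$ controlling the size of neighboring contributions.
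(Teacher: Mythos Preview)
Your reduction to the singleton case and your handling of the easy sub-cases (when $i_0$ is an endpoint of $I_{j_0}$, or when the two half-sums have opposite signs) are fine and match the paper's setup. The genuine gap is exactly where you flag it: the same-sign interior case. Your proposal there is not a proof but a hope---you say you would ``merge an adjacent pair of intervals in a sign-preserving way'' and invoke Lemma~\ref{ls}, but you give no mechanism for finding such a merge, no bound on the loss it causes, and Lemma~\ref{ls} (which \emph{splits} an $\mathcal{F}_\alpha$-set into two $\mathcal{S}_\alpha$-sets) does not obviously create room to \emph{add} a new minimum. There is no reason to expect a sign-preserving adjacent pair to exist, and even if one does, the loss from merging could be arbitrary relative to $|e_{i_0}^*(x)|$.

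The paper's argument avoids this difficulty entirely by a different and much cleaner device. Instead of splitting $I_{j_0}$ into two pieces $L$ and $R$, it splits it into \emph{three}: $[\min I_{j_0},\,i_0-1]$, the singleton $\{i_0\}$, and $[i_0+1,\,\max I_{j_0}]$. This introduces two new minima ($i_0$ and $i_0+1$) rather than one. To make room in $\mathcal{F}_\alpha$ for two extra minima, the paper deletes $\min I_1$ from the first interval $I_1$: this shifts the smallest minimum up by at least $1$, and since membership in $\mathcal{F}_\alpha$ requires $r/2\leqslant$ (first element), a shift of $1$ buys exactly the two extra slots needed. The only cost is the loss of $|a_{\min I_1}|$ from the $I_1$ contribution, but the new singleton $\{i_0\}$ contributes $|a_{i_0}|=\|x\|_\infty\geqslant |a_{\min I_1}|$, so there is no net loss. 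No sign analysis, no merging, and no appeal to Lemma~\ref{ls} are needed. You should replace your same-sign argument with this three-piece split and the ``sacrifice $\min I_1$'' trick.
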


\begin{proof}
Let $x=(a_i)\in X_{\alpha,\infty}$ and $|a_N| = \|x\|_\infty$. By induction, we need only to show that 
$$\|x-a_Ne_N\|\ \leqslant\ \|x\|.$$ Suppose, for a contradiction, that $\|x-a_Ne_N\| > \|x\|$. Removing the $N$th coefficient increases the norm implies that there exists an admissible set of intervals $\{I_j\}_{j=1}^d$ satisfying
\begin{enumerate}
    \item $a_{\min I_j}a_{\max I_j}\neq 0$ for all $1\leqslant j\leqslant d$,
    \item for some $k$, $N\in I_k$ and $\min I_k < N < \max I_k$,
    \item $\sum_{1\leqslant j\leqslant d, j\neq k} |\sum_{i\in I_j}a_i| + |\sum_{i\in I_k, i\neq N}a_i| > \|x\|$.
\end{enumerate} For two integers $a \leqslant b$, let $[a, b] = \{a, a+1, \ldots, b\}$; when $a > b$, we let $[a, b] = \emptyset$.
We form a new sequence of intervals as follows: if $k > 1$, 
\begin{align*}&I_1' \ =\ I_1\backslash \min I_1, I'_2 \ =\ I_2, \ldots, I'_{k-1} \ =\ I_{k-1},\\
&I'_k \ =\ [\min I_k, N-1], I'_{k+1} \ =\ \{N\}, I'_{k+2} \ =\ [N+1, \max I_k],\\
&I'_{k+3}\ =\ I_{k+1}, \ldots, I'_{d+2}\ =\ I_{d}.
\end{align*}
If $k= 1$, then
\begin{align*}
&I'_1 \ =\ [\min I_1 + 1, N-1], I'_2 \ =\ \{N\}, I'_{3}\ =\ [N+1, \max I_1],\\
&I'_{4}\ =\ I_{2}, \ldots, I'_{d+2}\ =\ I_d.
\end{align*}

To see that $\{I_j'\}_{j=1}^{d+2}$ is admissible, we need to show $\{\min I_j'\}_{j=1}^{d+2}\in \mathcal{F}_\alpha$. We consider only the case when $k > 1$; the case $k = 1$ is similar.
By construction, 
$$\{\min I_j'\,:\,1\leqslant j\leqslant d+2\} \ =\ \{\min (I_1\backslash \min I_1)\}\cup \{\min I_j\,:\, 2\leqslant j\leqslant d\}\cup\{N, N+1\}.$$
Let $A = \{\min I_j\}_{j=1}^d$ and $B = \{\min (I_1\backslash \min I_1)\}\cup \{\min I_j\,:\, 2\leqslant j\leqslant d\}$. Since $\min B-\min A\geqslant 1$ and $A\in \mathcal{F}_\alpha$, we know that $B\cup \{N, N+1\}\in \mathcal{F}_\alpha$.

We now use the admissible set $(I'_j)_{j=1}^{d+2}$ to obtain a contradiction. Write
\begin{equation}\label{ee1}\|x\|\ \geqslant\ \sum_{j=1}^{d+2}\left|\sum_{i\in I'_j}a_i\right| \ =\ \sum_{j = 1, k , k+1, k+2}\left|\sum_{i\in I'_j}a_i\right| + \sum_{j\neq 1, k, k+1, k+2}\left|\sum_{i\in I'_j}a_i\right|.\end{equation}
Since $|a_N|\geqslant |a_{\min I_1}|$, we have
\begin{align}\label{ee2}\sum_{j = 1, k , k+1, k+2}\left|\sum_{i\in I'_j}a_i\right|&\ \geqslant\ \left(\left|\sum_{i\in I_1}a_i\right|-|a_{\min I_1}|\right) + \left|\sum_{i=\min I_k}^{N-1}a_i\right| + |a_N| + \left|\sum_{i=N+1}^{\max I_k}a_i\right|\nonumber\\
&\ \geqslant\ \left|\sum_{i\in I_1}a_i\right| + \left|\sum_{i\in I_k, i\neq N}a_i\right|.
\end{align}
Furthermore, by definition,  
\begin{equation}\label{ee3}\sum_{j\neq 1, k, k+1, k+2}\left|\sum_{i\in I'_j}a_i\right|\ =\ \sum_{j = 2}^{k-1}\left|\sum_{i\in I_j}a_i\right| + \sum_{j=k+1}^{d}\left|\sum_{i\in I_j}a_i\right|.\end{equation}
By \eqref{ee1}, \eqref{ee2}, and \eqref{ee3}, we conclude that 
$$\|x\|\ \geqslant\ \sum_{1\leqslant j\leqslant d, j\neq k} \left|\sum_{i\in I_j}a_i\right| + \left|\sum_{i\in I_k, i\neq N}a_i\right|\ > \ \|x\|,$$
which is a contradiction. Therefore, $(e_n)$ is a $1$-suppression quasi-greedy. 
\end{proof}

\begin{cor}
The basis $(e_n)$ is $\mathcal{F}_\alpha$-greedy and thus, is $\mathcal{S}_\alpha$-greedy.
\end{cor}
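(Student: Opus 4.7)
The plan is to invoke the Konyagin--Temlyakov-style characterization in Theorem \ref{m1'} with the three ingredients already verified in this subsection, then deduce the $\mathcal{S}_\alpha$-greedy statement from the inclusion $\mathcal{S}_\alpha\subset \mathcal{F}_\alpha$ by monotonicity of the error functional. Concretely, Proposition \ref{pp1} supplies $1$-suppression quasi-greediness of $(e_n)$, while Proposition \ref{pp2} supplies both $\mathcal{F}_\alpha$-unconditionality and (full) democracy. Since democracy is the inequality $\|1_A\|\leqslant C\|1_B\|$ for all $A,B$ with $|A|\leqslant |B|$, without any restriction on $A$ or on $A\cap B$, it trivially implies $\mathcal{F}_\alpha$-disjoint democracy. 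Hence all three hypotheses of Theorem \ref{m1'} are in place, provided we verify that $\mathcal{F}_\alpha$ is hereditary.

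For the latter, I would run a quick check: if $F=\cup_{i=1}^r E_i$ with $r/2\leqslant E_1<\cdots <E_r$ and each $E_i\in \mathcal{S}_{\alpha-1}$, and if $F'\subset F$, then writing $E'_i:=F'\cap E_i$ and relabeling the nonempty traces as $E'_{i_1}<\cdots <E'_{i_s}$ with $s\leqslant r$, the hereditary property of $\mathcal{S}_{\alpha-1}$ gives $E'_{i_j}\in \mathcal{S}_{\alpha-1}$, while
$$\frac{s}{2}\ \leqslant\ \frac{r}{2}\ \leqslant\ \min E_1\ \leqslant\ \min E_{i_1}\ \leqslant\ \min E'_{i_1},$$
so $F'\in \mathcal{F}_\alpha$. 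Having confirmed this, Theorem \ref{m1'} directly yields that $(e_n)$ is $\mathcal{F}_\alpha$-greedy.

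For the second clause, I would note that the inclusion $\mathcal{S}_\alpha\subset \mathcal{F}_\alpha$ is immediate from the definitions: if $E=\cup_{i=1}^m E_i\in \mathcal{S}_\alpha$ with $m\leqslant E_1$ and each $E_i\in \mathcal{S}_{\alpha-1}$, then a fortiori $m/2\leqslant m\leqslant E_1$, so $E\in \mathcal{F}_\alpha$. Consequently the infimum defining $\sigma_m^{\mathcal{F}_\alpha}(x)$ is taken over a collection containing that defining $\sigma_m^{\mathcal{S}_\alpha}(x)$, giving $\sigma_m^{\mathcal{F}_\alpha}(x)\leqslant \sigma_m^{\mathcal{S}_\alpha}(x)$ for every $x\in X$ and $m\in \mathbb{N}$. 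The $\mathcal{F}_\alpha$-greedy inequality then upgrades with the same constant to the $\mathcal{S}_\alpha$-greedy inequality.

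There is no substantive obstacle here: this corollary is essentially a bookkeeping step, combining the characterization of Theorem \ref{m1'} with Propositions \ref{pp1} and \ref{pp2}, plus the elementary monotonicity $\mathcal{G}\subset \mathcal{F}\Rightarrow \sigma_m^{\mathcal{F}}\leqslant \sigma_m^{\mathcal{G}}$. The only point that requires even a line of verification is heredity of $\mathcal{F}_\alpha$, which follows the pattern of the usual heredity argument for Schreier families.
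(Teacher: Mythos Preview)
Your proof is correct and follows essentially the same approach as the paper: the paper's one-line proof cites Theorem \ref{m2} together with Propositions \ref{pp1} and \ref{pp2}, which is exactly the combination you unpack (Theorem \ref{m1'} being the relevant implication of Theorem \ref{m2}). Your added verifications---that $\mathcal{F}_\alpha$ is hereditary and that $\mathcal{S}_\alpha\subset\mathcal{F}_\alpha$ yields the monotonicity $\sigma_m^{\mathcal{F}_\alpha}\leqslant\sigma_m^{\mathcal{S}_\alpha}$---are details the paper leaves implicit, so your write-up is simply a more careful rendering of the same argument.
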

\begin{proof}
Use Theorem \ref{m2} and Propositions \ref{pp2} and \ref{pp1}. 
\end{proof}

It remains to show that $(e_n)$ is not $\mathcal{S}_{\alpha+1}$-unconditional and thus, not $\mathcal{S}_{\alpha+1}$-greedy. 
This part of the proof will require the repeated averages hierarchy \cite[pp. 1053]{AGR}. However, for our purposes, we only need the following lemma, a weaker result than \cite[Proposition 12.9]{AT}. 

\begin{lem}
For each $\alpha\in \omega_1$, $\varepsilon>0$ and $N\in \mathbb{N}$, there is a sequence $(a^\alpha_k)_{k=1}^\infty$ satisfying
\begin{enumerate}
    \item $a_k^\alpha \geqslant 0$ for each $k\in \mathbb{N}$ and $\|(a^\alpha_k)_k\|_{\ell_1}=1$,
    \item $\{k : a^\alpha_k\neq 0\}$ is an interval and a maximal  $\mathcal{S}_{\alpha+1}$-set,
    \item $L:= \min \{k: a^\alpha_k\neq 0\} > N$ and $(a^\alpha_k)_{k\geqslant L}$ is monotone decreasing,
    \item for each $G \in \mathcal{S}_\alpha$, we have
    $\sum_{k \in G} a_k^{\alpha} <\varepsilon$. 
\end{enumerate}
\label{RAH}
\end{lem}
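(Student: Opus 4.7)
The plan is to realize $(a^\alpha_k)$ as an instance of the repeated averages hierarchy (RAH) of Argyros--Tolias, and then verify each of the four properties. Define a probability measure $\eta^\beta_L$ on $\mathbb{N}$ by transfinite induction on $\beta$: set $\eta^0_L := \delta_L$; in the successor step set $\eta^{\beta+1}_L := \tfrac{1}{L}\sum_{i=1}^L \eta^\beta_{L_i}$, where $L_1 := L$ and $L_{i+1} := \max \supp(\eta^\beta_{L_i}) + 1$; in the limit step, using the fixed sequence $\beta_m+1\nearrow \beta$, set $\eta^\beta_L := \eta^{\beta_L+1}_L$ for $L$ sufficiently large (so that $\beta_L$ is defined). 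A routine parallel induction shows that $\eta^\beta_L$ is a probability measure whose support is a maximal $\mathcal{S}_\beta$-set beginning at $L$. Given the data $\alpha, N, \varepsilon$, I would take $a^\alpha_k := \eta^{\alpha+1}_L(k)$ for $L > N$ chosen sufficiently large below.

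Properties (1) and (2) are then immediate from the construction. For (3), a joint induction on $\beta$ establishes two statements simultaneously: that $\eta^\beta_L$ is non-increasing on its support, and that $\max_k \eta^\beta_M(k) = \eta^\beta_M(M)$ is a non-increasing function of $M$. The first gives monotonicity within each block $\supp(\eta^\beta_{L_i})$; the second, together with $L_{i+1} > L_i$, gives monotonicity across consecutive blocks, since it forces $\min \eta^\beta_{L_i} \geq \max \eta^\beta_{L_{i+1}}$.

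The heart of the proof is (4). I would derive it from the sharper auxiliary claim: for every $\alpha < \beta$ and $\varepsilon > 0$ there exists $L_0$ such that $\sum_{k \in G} \eta^\beta_L(k) < \varepsilon$ for all $L \geq L_0$ and all $G \in \mathcal{S}_\alpha$. Taking $\beta = \alpha + 1$ yields (4). The claim is proved by transfinite induction on $\beta$. In the successor case $\beta = \gamma+1$: if $\alpha < \gamma$, the inductive hypothesis applied to each summand $\eta^\gamma_{L_i}$ transfers through the average directly; if $\alpha = \gamma$, one uses that a $G \in \mathcal{S}_\alpha$ cannot carry significant $\eta^{\alpha+1}_L$-mass from too many of the blocks $F_i := \supp(\eta^\alpha_{L_i})$, exploiting both the structural constraints on $\mathcal{S}_\alpha$-sets (in particular $|G| \leq \min G$ when $\alpha \geq 1$, with analogous bounds via the hereditary-spreading structure for larger $\alpha$) and the rapid growth of $(L_i)$ to obtain an estimate of order $O(1/L)$. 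The limit case reduces to the successor case via Proposition \ref{k2}: a $G \in \mathcal{S}_\alpha$ with $\min G \geq L$ already lies in some $\mathcal{S}_{\alpha_M + 1}$ for an $M$ controlled by $L$, and $\eta^\alpha_L = \eta^{\alpha_L+1}_L$ by definition.

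The main obstacle is the same-level successor case ($\alpha = \gamma$), where the bound cannot come from averaging alone but requires a combinatorial argument about how $\mathcal{S}_\alpha$-sets distribute over the support-blocks $F_i$ of the RAH average; this is the step that drives the quantitative growth rate needed of the $L_i$'s. Once this step is in hand, the final choice of $L > N$ in the original lemma is dictated by the induction constants, yielding (1)--(4) simultaneously.
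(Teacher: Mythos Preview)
The paper does not prove this lemma; it simply records it as a weakening of \cite[Proposition~12.9]{AT} on the repeated averages hierarchy and uses it as a black box. Your proposal correctly identifies the RAH as the source and reconstructs the argument. In particular, your transfinite induction for property~(4) is essentially the same argument the paper itself spells out later, in the proof of Proposition~\ref{ppp1}, for the $(\infty,\alpha)$ construction: the case split into $\alpha<\gamma$ versus $\alpha=\gamma$ in the successor step, and the reduction via the defining ladder in the limit step, all match.

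There is, however, a genuine gap in your treatment of property~(3). Your joint induction carries the two hypotheses (a)~$\eta^\beta_L$ is non-increasing on its support and (b)~$M\mapsto\max_k\eta^\beta_M(k)$ is non-increasing. You then write that (b) together with $L_{i+1}>L_i$ ``forces $\min\eta^\beta_{L_i}\geqslant\max\eta^\beta_{L_{i+1}}$.'' It does not: (b) only gives $\max\eta^\beta_{L_i}\geqslant\max\eta^\beta_{L_{i+1}}$, which says nothing about the \emph{minimum} of the earlier block. What you actually need to propagate is the stronger statement that whenever $L'>\max\supp(\eta^\beta_L)$ one has $\min_{k\in\supp(\eta^\beta_L)}\eta^\beta_L(k)\geqslant\max_{k}\eta^\beta_{L'}(k)$; this is what glues consecutive blocks in the successor step, and it is what the standard RAH arguments verify (using, e.g., the bound $\eta^\beta_{L'}(k)\leqslant 1/L'$ from Lemma~\ref{lh1} together with a lower bound on the tail of $\eta^\beta_L$). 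With that correction your outline goes through.
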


Choose $N$ such that $$E\backslash \{1, \ldots, N-1\}\in \mathcal{S}_\alpha, \forall E\in \mathcal{S}_1.$$
Fix $\varepsilon>0$ and find $(a_k^\alpha)$ satisfying Lemma \ref{RAH} with $N$ chosen as above. Since $F=\{k : a_k^\alpha\not=0\}\in \mathcal{S}_{\alpha+1}$, write $F = \cup_{i=1}^m E_i$, where $m\leqslant E_1 < E_2 <\cdots < E_m$ and $E_i\in \mathcal{S}_\alpha$. Since $F$ is an interval, each $E_i$ is an interval; furthermore, $N < \{\min E_i: 1\leqslant i\leqslant m\}\in \mathcal{S}_1$. Hence, $\{\min E_i: 1\leqslant i\leqslant m\}\in \mathcal{S}_\alpha\subset \mathcal{F}_\alpha$. By Lemma \ref{RAH} items (1) and (2), we have $\|\sum_{k\in F}a_k^\alpha e_k\| = 1$. 

We estimate $\sum_{k\in F}(-1)^k a_k^\alpha e_k$. Let $I_1<\cdots < I_d$ be intervals so that $(\min I_j)_{j=1}^d \in \mathcal{F}_\alpha$ and $a^\alpha_{\min I_j}\neq 0$. For any interval $I_j$, $|\sum_{i\in I_j}(-1)^k a_k^\alpha|\leqslant 2a^\alpha_{\min I_j}$ because $(a^\alpha_k)_k$ is monotone decreasing. Therefore,
$$\sum_{j=1}^d \left|\sum_{k\in I_j}(-1)^k a_k^\alpha\right|\ \leqslant\ \sum_{j=1}^d 2 a^\alpha_{\min I_j}.$$
By Lemma \ref{ls}, we can write the set $\{\min I_1, \min I_{2}, \ldots, \min {I_d}\}$ as the union of two disjoint sets $A_1$ and $A_2$ in $\mathcal{S}_\alpha$. By Lemma \ref{RAH} item (3), we obtain
$$\sum_{j=1}^d a^\alpha_{\min I_j}\ =\ \sum_{i\in A_1}a^\alpha_i + \sum_{i\in A_2}a^\alpha_i\ <\ 2\varepsilon.$$
Thus $\|\sum_{k\in F}(-1)^k a_k^\alpha e_k\| < 4\varepsilon$. As $\varepsilon$ was arbitrary and $F\in \mathcal{S}_{\alpha+1}$, we see that $(e_n)$ is not $\mathcal{S}_{\alpha+1}$-unconditional.

\subsection{An $(\infty,\alpha)$-quasi-greedy basis}

\subsubsection{Repeated average hierarchy}

Let $[\mathbb{N}]$ denote the collection of all infinite subsequences of $\mathbb{N}$. Similarly, if $M\in [\mathbb{N}]$, then $[M]$ denotes the collection of all infinite subsequences of $M$. 

\begin{defi}\normalfont Let $\mathcal{B} = (e_n)$ be the canonical basis of $c_{00}$. 
For every countable ordinal $\alpha$ and $M = (m_n)_{n=1}^\infty\in [\mathbb{N}]$, we define a convex block sequence $(\alpha(M, n))_{n=1}^\infty$ of $\mathcal{B}$ by transfinite induction on $\alpha$. If $\alpha = 0$, then $\alpha(M, n) := e_{m_n}$. Assume that $(\beta(M, n))_{n=1}^\infty$ has been defined for all $\beta < \alpha$ and all $M\in [\mathbb{N}]$. For $M\in [\mathbb{N}]$, we define $(\alpha(M, n))_{n=1}^\infty$.

If $\alpha$ is a successor ordinal, write $\alpha = \beta + 1$. Set 
$$\alpha(M, 1)\ :=\ \frac{1}{m_1}\sum_{n=1}^{m_1}\beta(M, n).$$
Suppose that $\alpha(M,1) < \cdots < \alpha(M,n)$ have been defined. Let 
$$M_{n+1} \ :=\ \{m\in M: m > \max\supp (\alpha(M,n))\}\mbox{ and }k_n \ :=\ \min M_{n+1}.$$
Set 
$$\alpha(M, n+1)\ :=\ \frac{1}{k_n}\sum_{i=1}^{k_n} \beta(M_{n+1}, i).$$

If $\alpha$ is a limit ordinal, let $(\alpha_n+1)\nearrow \alpha$. Set 
$$\alpha(M,1)\ :=\ (\alpha_{m_1}+1)(M, 1).$$
Suppose that $\alpha(M,1) < \cdots < \alpha(M,n)$ have been defined. Let 
$$M_{n+1} \ :=\ \{m\in M: m > \max\supp (\alpha(M,n))\}\mbox{ and }k_n \ :=\ \min M_{n+1}.$$
Set
$$\alpha(M, n+1) \ :=\ (\alpha_{k_n}+1)(M_{n+1}, 1).$$
\end{defi}

\begin{lem}\label{lh1}
For each ordinal $\alpha\geqslant 1$ and $M\in [\mathbb{N}]$, we have
\begin{equation}\label{eh1}\|\alpha(M, n)\|_{\ell_1} \ =\ 1\mbox{ and }0\ \leqslant\ e_i^*(\alpha(M, n)) \ \leqslant\ \frac{1}{\min\supp(\alpha(M, n))}, \forall n, i\in \mathbb{N}.\end{equation}
\end{lem}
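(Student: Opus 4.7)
The plan is to proceed by transfinite induction on $\alpha$, simultaneously maintaining the auxiliary inclusion $\supp(\alpha(M, n)) \subset M_n$, where $M_1 := M$ and $M_{n+1} := \{m \in M : m > \max\supp(\alpha(M, n))\}$. This inclusion is immediate from the recursive construction, since every basis vector appearing in $\alpha(M, n)$ is inherited from a term of the form $\beta(M_n, j)$ (successor case) or $(\alpha_{k_{n-1}}+1)(M_n, 1)$ (limit case), and by induction these terms are supported in $M_n$. The inclusion guarantees $\min\supp(\alpha(M, n)) \geqslant k_{n-1} := \min M_n$, and this is the only support fact needed below. Note also that the sequence $(\beta(M_n, j))_{j=1}^{k_{n-1}}$ constructed by the recursion is a genuine block sequence, i.e.\ the supports are pairwise disjoint and ordered increasingly, which will be used repeatedly.

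For the base case $\alpha = 1$, treated as a successor with $\beta = 0$, the definition yields $1(M, n) = \frac{1}{k_{n-1}}\sum_{j=1}^{k_{n-1}} e_{i_j}$, where $i_1 < \cdots < i_{k_{n-1}}$ are the first $k_{n-1}$ elements of $M_n$, and both claims are immediate. For the general successor step $\alpha = \beta+1$, writing
\[\alpha(M, n)\ =\ \frac{1}{k_{n-1}}\sum_{j=1}^{k_{n-1}}\beta(M_n, j)\]
and invoking the inductive hypothesis $\|\beta(M_n, j)\|_{\ell_1} = 1$ together with disjointness of the supports, we obtain $\|\alpha(M, n)\|_{\ell_1} = 1$. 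For the coefficient bound, disjointness forces at most one index $j$ to contribute to a given coordinate $i$. Combining the inductive bound $e_i^*(\beta(M_n, j)) \leqslant 1/\min\supp\beta(M_n, j)$ with the block-ordering inequality $\min\supp\beta(M_n, j) \geqslant \min\supp\beta(M_n, 1) = \min\supp(\alpha(M, n))$ gives
\[0\ \leqslant\ e_i^*(\alpha(M, n))\ =\ \frac{1}{k_{n-1}}\,e_i^*(\beta(M_n, j))\ \leqslant\ \frac{1}{k_{n-1}}\cdot\frac{1}{\min\supp(\alpha(M, n))}\ \leqslant\ \frac{1}{\min\supp(\alpha(M, n))},\]
where the last inequality absorbs the leftover $1/k_{n-1} \leqslant 1$.

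For the limit step, writing $\alpha_j + 1 \nearrow \alpha$, the definition reduces $\alpha(M, n)$ to $(\alpha_{k_{n-1}}+1)(M_n, 1)$; since $\alpha_{k_{n-1}}+1 < \alpha$, both claims transfer verbatim from the inductive hypothesis applied to this smaller ordinal and the shifted set $M_n$, with nothing further to check.

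The main obstacle — modest but worth getting right — is the bookkeeping around the auxiliary support inclusion: at each invocation of the inductive hypothesis one must be certain that the restricted set $M_n$ carries through cleanly, so that both $\min\supp\beta(M_n, j) \geqslant k_{n-1}$ and the block ordering of the summands are legitimately available. Once that thread is pulled through the successor and limit cases, the two numerical estimates follow mechanically from the averaging definition, with no estimate more delicate than dividing by $k_{n-1}$.
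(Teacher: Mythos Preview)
Your proposal is correct and follows the same approach as the paper, which simply states that the proof is immediate from induction; you have carefully filled in the transfinite induction that the paper leaves implicit. One minor simplification: in the successor step you do not actually need the block-ordering inequality, since the inductive hypothesis already gives $e_i^*(\beta(M_n,j))\leqslant 1$, and then $e_i^*(\alpha(M,n))\leqslant 1/k_{n-1}=1/\min\supp(\alpha(M,n))$ directly.
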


\begin{proof}
The proof is immediate from induction. 
\end{proof}

\begin{prop}\label{ppp1}
Fix $\alpha < \beta$. For all $N\in \mathbb{N}$ and $M\in [\mathbb{N}]$, there exists $L\in [M]$ such that $\min L > N$ and 
$$\|\beta(L, 1)\|_{\alpha} \ <\ \frac{3}{\min L},$$
where 
$$\|(a_n)\|_{\alpha}\ :=\ \sup_{F\in \mathcal{S}_\alpha}\sum_{n\in F}|a_n|.$$
\end{prop}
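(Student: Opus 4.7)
I would proceed by transfinite induction on $\beta$. The base case $\beta = 1$ forces $\alpha = 0$; Lemma \ref{lh1} then immediately gives $\|1(L, 1)\|_0 = 1/\min L < 3/\min L$ for any $L \in [M]$ with $\min L > N$. For the inductive step, I separate the limit case from the successor case. If $\beta$ is a limit ordinal with $\beta_k + 1 \nearrow \beta$, then by definition $\beta(L, 1) = (\beta_{\min L} + 1)(L, 1)$; taking $\min L$ large enough that $\beta_{\min L} \geqslant \alpha$, the inductive hypothesis applied to the ordinal $\beta_{\min L} + 1 < \beta$ supplies the desired $L$.

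In the successor case $\beta = \gamma + 1$, the definition gives $\beta(L, 1) = \frac{1}{m_1}\sum_{n=1}^{m_1}\gamma(L, n)$ with $m_1 := \min L$. When $\alpha < \gamma$, I would build $L$ recursively, applying the inductive hypothesis down successive tails of $M$ so that $\|\gamma(L, n)\|_\alpha < 3/\min \supp(\gamma(L, n))$ for each $n$; since $\min \supp(\gamma(L, n)) \geqslant m_1$, averaging yields $\|\beta(L, 1)\|_\alpha < 3/m_1$. The crux is the tight subcase $\beta = \alpha + 1$. Here $S_n := \supp(\alpha(L, n))$ is a maximal $\mathcal{S}_\alpha$-set with $k_n := \min S_n$ strictly increasing in $n$, and for each $F \in \mathcal{S}_\alpha$ I would partition the indices $\{n : F \cap S_n \neq \emptyset\}$ into Type I (where $F \supset S_n$) and Type II (where $F \cap S_n \subsetneq S_n$). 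A maximality property of $\mathcal{S}_\alpha$-sets---namely that the maximum cardinality of an $\mathcal{S}_\alpha$-set starting at $k$ is a strictly increasing function of $k$---forces Type I to contain at most one index, contributing at most $1$ to the sum $\sum_n T_n$ where $T_n := \sum_{i \in F \cap S_n} e_i^*(\alpha(L, n))$. For Type II, Lemma \ref{lh1} gives $T_n \leqslant |F \cap S_n|/k_n$, and a suitable choice of $L \subset M$ (one making the block sizes $|S_n|$ comparable to $k_n$ and successive $k_n$ grow fast enough) would bound the total Type II contribution by less than $2$. Combining, $\sum_n T_n < 3$, so $\|\beta(L, 1)\|_\alpha < 3/m_1 = 3/\min L$.

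The main obstacle lies in the combinatorial bookkeeping of the tight subcase. The ``at most one full containment'' claim requires an auxiliary induction on $\alpha$ using the recursive definition of the Schreier hierarchy and the strict monotonicity of the maximum cardinality function. More delicate is the quantitative control of the Type II contribution: since $M$ may be arbitrarily sparse, one must first thin $M$ by a Ramsey-style extraction so that the supports $S_n$ arising in the convex blocks of $\alpha(L, n)$ have growth rates compatible with the bound $3/\min L$. The compactness and spreading properties of $\mathcal{S}_\alpha$, together with Lemma \ref{lh1}, should make this extraction possible.
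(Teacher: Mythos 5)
Your induction scheme, base case, and the two ``easy'' inductive cases ($\beta$ a limit ordinal, and $\beta=\gamma+1$ with $\alpha<\gamma$) match the paper's proof in substance, modulo one fixable circularity: in the limit case you want to apply the inductive hypothesis to the ordinal $\beta_{\min L}+1$, but $\min L$ is only determined by the output of that very application. (The paper avoids this by first fixing the tail $L_1=M|_{>m}$, hence $\ell=\min L$, and then building $L$ block by block with that first block prescribed.)

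The genuine gap is in the tight subcase $\beta=\alpha+1$, and it is not mere bookkeeping. Your Type II bound rests on the premise that $|F|$ is controlled by $\min F$ for $F\in\mathcal{S}_\alpha$ --- you invoke ``the maximum cardinality of an $\mathcal{S}_\alpha$-set starting at $k$'' as a (finite, strictly increasing) function of $k$. This function is finite only for $\alpha\leqslant 1$. Already for $\alpha=2$ the sets $\{2\}\cup E$ with $2<E\in\mathcal{S}_1$ lie in $\mathcal{S}_2$, have minimum $2$, and have unbounded cardinality; so no thinning of $M$ (nor any attempt to make $|S_n|$ comparable to $k_n$, which in fact fails in the opposite direction --- spreading $L$ out makes the maximal $\mathcal{S}_\alpha$-supports \emph{larger}) can make $\sum_{n}|F\cap S_n|/k_n$ small uniformly over $F\in\mathcal{S}_\alpha$. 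The estimate $T_n\leqslant |F\cap S_n|/k_n$ from Lemma \ref{lh1} is simply too crude here. The missing idea, which is the heart of the paper's Case 2.2, is to exploit the recursive structure of $\mathcal{S}_\alpha$ itself rather than cardinality: any $G\in\mathcal{S}_\alpha$ splits into $q\leqslant\min G$ pieces lying in a strictly lower family ($\mathcal{S}_{\alpha-1}$, or $\mathcal{S}_{\alpha_p}$ with $p\leqslant\min G$ when $\alpha$ is a limit), and the blocks $\alpha(L_n,1)$ are chosen via the inductive hypothesis so that their norms $\|\cdot\|_{\alpha_{k_{n-1}}}$ with respect to an \emph{increasing} sequence of lower-level families are each $<3/\min L_n$. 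Then block $n$ contributes at most $q\cdot 3/\min L_n$ with $q\leqslant\min G\leqslant k_{j_0}\ll\min L_{j_0+2}$, and the geometric growth of the $\min L_n$ (Lemma \ref{8times}) closes the estimate. Your Type I observation, by contrast, is correct but is more easily obtained directly from maximality under inclusion of $\supp(\alpha(L,n))$ (a proper $\mathcal{S}_\alpha$-superset cannot exist), not from any cardinality function.
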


\begin{rek}\normalfont
See \cite[Proposition 2.3]{AT} for the case when $\alpha$ is a finite ordinal. Our proof of Proposition \ref{ppp1} is a combination of ideas used in the proofs of \cite[Proposition 2.3]{AT} and \cite[Proposition 2.15]{AG}.
\end{rek}

\begin{proof}[Proof of Proposition \ref{ppp1}]
We prove by transfinite induction on $\beta$. Base case: $\beta = 1$. Then $\alpha = 0$. Let $N\in \mathbb{N}$ and $M = (m_n)_{n=1}^\infty\in [\mathbb{N}]$. Let $m_k$ be the smallest such that $m_k > N$. Choose $L = (m_n)_{n\geqslant k}$. We have
$$\|1(L, 1)\|_{0}\ =\ \frac{1}{\min L}\ <\ \frac{3}{\min L}.$$
Indeed, for finite ordinals $\beta\geqslant 1$, we know the conclusion holds by \cite[Proposition 2.3]{AT}.
Inductive hypothesis: suppose that the statement holds for all $\eta < \beta$ for some $\beta\geqslant \omega$. We need to show that it also holds for $\beta$.

Case 1: $\beta$ is a limit ordinal. Let $(\beta_n + 1)\nearrow \beta$ and $\alpha < \beta$. Choose $m > N$ such that $\beta_m > \alpha$. Set $L_1 :=  M|_{> m}$ and $\ell := \min L_1 > m$. Note that $\ell\geqslant 3$.
By the inductive hypothesis, there exists $L_2\in [M]$ such that $\min L_2 > \max\supp(\beta_\ell(L_1, 1))$ and 
$$\|\beta_\ell(L_2, 1)\|_\alpha \ <\ \frac{3}{\min {L_2}}.$$
Repeat the process to find subsequences $L_3, \ldots, L_\ell\in [M]$ such that 
$$\supp(\beta_\ell(L_1, 1)) \ <\ \supp(\beta_\ell(L_2, 1)) \ <\ \cdots < \ \supp(\beta_\ell(L_\ell, 1))$$ 
and 
$$\|\beta_\ell(L_n, 1)\|_\alpha \ <\ \frac{3}{\min L_n}, \forall~ 2\leqslant n\leqslant \ell.$$
Let $L:= \cup_{n=1}^{\ell-1}\supp (\beta_\ell(L_n, 1))\cup L_\ell\in [M]$. Then $\min L > N$. By definition,
$$\beta(L, 1)\ :=\ (\beta_{\ell}+1)(L, 1)\ =\  \frac{1}{\ell}\sum_{n=1}^\ell \beta_\ell(L, n)\ =\ \frac{1}{\ell} \sum_{n=1}^\ell \beta_\ell(L_n, 1).$$
We have 
\begin{align*}
\|\beta(L, 1)\|_\alpha&\ \leqslant\ \frac{1}{\ell} \sum_{n=1}^\ell \|\beta_\ell(L_n, 1)\|_\alpha\\
&\ \leqslant\  \frac{1}{\ell}+ \frac{1}{\ell} \left(\frac{3}{\min L_2} + \cdots + \frac{3}{\min L_\ell}\right)\\
&\ \leqslant\ \frac{1}{\ell} + \frac{1}{\ell} \frac{3}{\min L_2}\left(1 + \frac{1}{8} + \frac{1}{8^2} + \cdots\right)\mbox{ by Lemma \ref{8times}}\\
&\ =\ \frac{1}{\ell}\left(1+\frac{24}{7\min L_2}\right) \ <\ \frac{3}{\ell}.
\end{align*}

Case 2: $\beta$ is a successor ordinal. Write $\beta = \eta + 1$. 

\begin{enumerate}
    \item Case 2.1: $\alpha < \eta$. Set $L_1:= M|_{>{N+1}}$ and $\ell := \min L_1 \geqslant 3$. 
By the inductive hypothesis, there exists $L_2\in [M]$ such that $\min L_2 > \max\supp(\eta(L_1, 1))$ and 
$$\|\eta(L_2, 1)\|_\alpha \ <\ \frac{3}{\min L_2}.$$
Repeat the process to find subsequences $L_3, \ldots, L_\ell$ such that 
$$\supp(\eta(L_1, 1)) \ <\ \supp(\eta(L_2, 1)) \ <\ \cdots < \ \supp(\eta(L_\ell, 1))$$ 
and 
$$\|\eta(L_n, 1)\|_\alpha \ <\ \frac{3}{\min L_n}, \forall 2\leqslant n\leqslant \ell.$$
Let $L:= \cup_{n=1}^{\ell-1}\supp (\eta(L_n, 1))\cup L_\ell\in [M]$. Then $\min L > N$. By definition,
$$\beta(L, 1)\ :=\ (\eta+1)(L, 1)\ =\  \frac{1}{\ell}\sum_{n=1}^\ell \eta(L, n)\ =\ \frac{1}{\ell} \sum_{n=1}^\ell \eta(L_n, 1).$$
Similar to Case 1, we have $\|\beta(L, 1)\|_\alpha < 3/\ell$.
\item Case 2.2: $\alpha = \eta$. Let $(\alpha_n+1)\nearrow \alpha$ and $\mathcal{S}_{\alpha_n}\subset \mathcal{S}_{\alpha_{n+1}}$ for all $n\geqslant 1$. Set $L_1:= M|_{>{N+1}}$ and $\ell := \min L_1 \geqslant 3$. We have  
$$(\alpha_\ell+1)(L_1, 1)\ =\ \alpha(L_1, 1).$$
Let $k_1 = \max\supp (\alpha(L_1, 1))$. By the inductive hypothesis, find $L_2\in [M]$ with $k_1 < \min L_2$ and
$$\|\alpha(L_2, 1)\|_{\alpha_{k_1}} \ <\ \frac{3}{\min L_2}.$$
Repeat the process to find subsequences $L_3, \ldots, L_\ell\in [M]$ such that 
$$\supp(\alpha(L_1, 1)) \ <\ \supp(\alpha(L_2, 1)) \ <\ \cdots < \ \supp(\alpha(L_\ell, 1))$$ 
and if $k_n = \max\supp (\alpha(L_n, 1))$, we have 
$$\|\alpha(L_n, 1)\|_{\alpha_{k_{n-1}}} \ <\ \frac{3}{\min L_n}, \forall 2\leqslant n\leqslant \ell.$$
Let $L:= \cup_{n=1}^{\ell-1}\supp (\alpha(L_n, 1))\cup L_\ell\in [M]$. Then
$\beta(L, 1)\ :=\ \frac{1}{\ell}\sum_{n=1}^\ell \alpha(L_n, 1)$. 

It holds that
$\|\beta(L, 1)\|_\alpha < \frac{3}{\ell}$.
Indeed, let $G\in \mathcal{S}_\alpha$. Suppose that $k:=\min G\in \supp(\alpha(L_{j_0}, 1))$. Then $k\leqslant k_{j_0}$. By the definition of $\mathcal{S}_\alpha$, choose $p\leqslant k$ such that $G\in \mathcal{S}_{\alpha_p + 1}$. Finally, let $q\leqslant k$ be such that $G = \cup_{n=1}^q G_n$, where $G_1 < G_2 < \cdots < G_q$ and $G_n\in \mathcal{S}_{\alpha_p}$. For $j_0< n \leqslant \ell$, because $p\leqslant k\leqslant k_{n-1}$, we obtain $\mathcal{S}_{\alpha_p}\subset \mathcal{S}_{\alpha_{k_{n-1}}}$ and
$$\|\alpha(L_n, 1)\|_{\alpha_p}\ \leqslant\ \|\alpha(L_n, 1)\|_{\alpha_{k_{n-1}}} \ <\ \frac{3}{\min L_n}.$$
Therefore, 
$$\sum_{n\in G}e_n^*(\alpha(L_n, 1))\ \leqslant\ q\frac{3}{\min L_n}, \forall j_0< n \leqslant \ell.$$
Noting that $q \leqslant k\leqslant k_{j_0}< \min L_{j_0+1}\leqslant \frac{1}{8} \min L_{j_0+2}$ by Lemma \ref{8times}, we have
\begin{align*}\sum_{n\in G}e_n^*(\beta(L, 1))&\ =\ \frac{1}{\ell}\left(1+1+ 3q\sum_{n=j_0+2}^\ell \frac{1}{\min L_n}\right)\\
&\ \leqslant\ \frac{1}{\ell}\left(2 + \frac{24q}{7\min L_{j_0+2}}\right)\ <\ \frac{3}{\ell}. 
\end{align*}
\end{enumerate}
We have completed the proof. 
\end{proof}

\subsubsection{An $(\infty, \alpha)$-quasi-greedy basis}

By Proposition \ref{ppp1}, we can find infinitely many $\mathcal{S}_{\alpha+1}$-maximal sets 
$F_1 < F_2 < F_3 < \cdots $ and for each set $F_i$, coefficients $(w_n)_{n\in F_i}$,  such that
$\sum_{n\in F_i} w_n = 1$, while $$\left\|\min F_i\cdot \sum_{n\in F_i}w_ne_n\right\|_{\alpha}\ <\ 3.$$

Let $X$ be the completion of $c_{00}$ under the norm: 
$$\|(a_n)_n\|\ :=\ \sup_{F_i}\left\{\max_{n}|a_n|, \min F_i\cdot \sum_{n\in F_i} w_n|a_n|\right\}.$$
Let $\mathcal{B}$ be the canonical basis.

\begin{claim}\label{cl2}
The basis $\mathcal{B}$ is $1$-unconditional and normalized. 
\end{claim}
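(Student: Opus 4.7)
The plan is to derive both properties directly from the structure of the norm, using Lemma \ref{lh1} for the normalization bound.

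For normalization, I would compute $\|e_k\|$ from the definition. By inspection,
\[
\|e_k\|\ =\ \max\left\{1,\ \sup_{F_i\ni k}\min F_i\cdot w_k\right\}.
\]
Here each weight $w_k$ is of the form $e_k^*((\alpha+1)(L,1))$ for the relevant $L$, so Lemma \ref{lh1} gives $w_k \leqslant 1/\min \supp((\alpha+1)(L,1)) = 1/\min F_i$ whenever $k\in F_i$. Hence $\min F_i\cdot w_k\leqslant 1$ for every $F_i$ containing $k$, and therefore $\|e_k\|=1$.

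For $1$-unconditionality, the key observation is that the norm $\|(a_n)_n\|$ depends only on the coordinates $(|a_n|)_n$: both $\max_n|a_n|$ and each sum $\min F_i\cdot \sum_{n\in F_i} w_n|a_n|$ use only absolute values, and the weights $w_n$ are nonnegative by Lemma \ref{lh1}. This gives sign invariance: $\|\sum_n \varepsilon_n a_n e_n\|=\|\sum_n a_n e_n\|$ for all signs $(\varepsilon_n)$. For suppression, given any finite $A\subset\mathbb{N}$, the coordinates of $x-P_A(x)$ satisfy $|e_n^*(x-P_A(x))|\leqslant |a_n|$ pointwise. Substituting into each quantity appearing in the $\sup$ yields
\[
\max_n|e_n^*(x-P_A(x))|\ \leqslant\ \max_n|a_n|,\quad \min F_i\cdot\sum_{n\in F_i}w_n|e_n^*(x-P_A(x))|\ \leqslant\ \min F_i\cdot\sum_{n\in F_i}w_n|a_n|,
\]
so taking the supremum over all $F_i$ gives $\|x-P_A(x)\|\leqslant \|x\|$. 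Combining sign invariance with $1$-suppression unconditionality yields $1$-unconditionality.

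No serious obstacle arises; the only point one must be careful about is that each weight $w_n$ is nonnegative and satisfies the upper bound $w_n\leqslant 1/\min F_i$, both of which are furnished by Lemma \ref{lh1}.
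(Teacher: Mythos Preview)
Your proof is correct and follows essentially the same approach as the paper. The paper simply declares $1$-unconditionality ``obvious'' (the norm depends only on $(|a_n|)_n$) and proves normalization via the same application of Lemma \ref{lh1}; you have merely spelled out the unconditionality step in more detail.
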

\begin{proof}
That $\mathcal{B}$ is $1$-unconditional is obvious. Let us show that $\|e_n\| = 1$ for all $n\in \mathbb{N}$. Fix $n\in \mathbb{N}$. Due to the appearance of $\|\cdot\|_\infty$, $\|e_n\|\geqslant 1$. Since $\min F_i \cdot w_n \leqslant 1$ for all $i\in \mathbb{N}$ and $n\in F_i$ according to Lemma \ref{lh1}, $\|e_n\|\leqslant 1$. Hence, $\|e_n\| = 1$.
\end{proof}

\begin{claim}\label{cl3}
The basis $\mathcal{B}$ is $\mathcal{S}_\alpha$-disjoint democratic. In particular, $\|1_A\| < 3$ for all $A\in \mathcal{S}_\alpha$. 
\end{claim}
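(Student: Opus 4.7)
The plan is to first establish the quantitative bound $\|1_A\| < 3$ for every $A \in \mathcal{S}_\alpha$ and then deduce $\mathcal{S}_\alpha$-disjoint democracy immediately, using that the norm dominates $\|\cdot\|_\infty$ and hence $\|1_B\| \geqslant 1$ for any nonempty $B$.

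For the quantitative bound, I would simply unpack the definition of $\|\cdot\|$ applied to $1_A$. The $\max_n|a_n|$ summand contributes $1$ (when $A \neq \emptyset$), and for each set $F_i$ the corresponding term inside the outer supremum equals $\min F_i \cdot \sum_{n\in F_i \cap A} w_n$. The crucial observation is that hereditariness of $\mathcal{S}_\alpha$, combined with $A \in \mathcal{S}_\alpha$, yields $F_i \cap A \in \mathcal{S}_\alpha$, so
\[
\min F_i \cdot \sum_{n\in F_i \cap A} w_n \ \leqslant\ \left\|\min F_i \cdot \sum_{n\in F_i} w_n e_n\right\|_\alpha \ <\ 3,
\]
where the last strict inequality is precisely the bound engineered into the choice of the sets $F_i$ and the weights $(w_n)_{n\in F_i}$ via Proposition \ref{ppp1}. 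Taking the supremum over $i$ yields $\|1_A\| < 3$.

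For the democracy statement, fix $A \in \mathcal{S}_\alpha$ and $B \subset \mathbb{N}$ disjoint from $A$ with $|A| \leqslant |B|$. The case $A = \emptyset$ is vacuous; otherwise $B$ is nonempty, so $\|1_B\| \geqslant \|1_B\|_\infty \geqslant 1$ from the $\max_n|a_n|$ component of the norm, and combining with the bound above gives $\|1_A\| < 3 \leqslant 3 \|1_B\|$, which is $\mathcal{S}_\alpha$-disjoint democracy with constant $3$.

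I do not expect a genuine obstacle here: the weights $(w_n)_{n \in F_i}$ were constructed in Proposition \ref{ppp1} precisely so that the $\|\cdot\|_\alpha$ bound used above holds, so the argument amounts to carefully identifying the correct summand in the norm and then applying hereditariness. The only conceptual point worth flagging is this use of hereditariness of $\mathcal{S}_\alpha$ to pass from $A$ to $F_i \cap A$, which is what allows the $\|\cdot\|_\alpha$ estimate on $\min F_i \cdot \sum_{n\in F_i}w_n e_n$ to be transferred to the sum $\min F_i \cdot \sum_{n \in F_i \cap A} w_n$ that actually appears in $\|1_A\|$.
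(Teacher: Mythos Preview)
Your proposal is correct and follows essentially the same approach as the paper: both arguments bound $\min F_i \cdot \sum_{n\in F_i\cap A} w_n$ by $\|\min F_i \cdot \sum_{n\in F_i} w_n e_n\|_\alpha < 3$ using that $F_i\cap A\in \mathcal{S}_\alpha$ by hereditariness, and then take the supremum over $i$. Your write-up is slightly more explicit than the paper's (you spell out the hereditariness step and the trivial deduction of democracy from $\|1_B\|\geqslant 1$), but the substance is identical.
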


\begin{proof}
Choose $A\in \mathcal{S}_\alpha$. For any $F_i$, we have
\begin{align*}
    \min F_i\cdot \sum_{n\in A\cap F_i}w_n \ \leqslant\ \left\|\min F_i\cdot \sum_{n\in F_i}w_ne_n\right\|_{\alpha}\ < \ 3.
\end{align*}
Therefore, $\|1_A\| < 3$.
\end{proof}

\begin{claim}\label{cl4}
The basis $\mathcal{B}$ is not $\mathcal{S}_{\alpha+1}$-disjoint democratic. 
\end{claim}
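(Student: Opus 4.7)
The plan is to exhibit, for every constant $C \geqslant 1$, disjoint sets $A \in \mathcal{S}_{\alpha+1}$ and $B \subset \mathbb{N}$ with $|A| \leqslant |B|$ and $\|1_A\| > C\|1_B\|$. The natural candidate is $A = F_i$, which lies in $\mathcal{S}_{\alpha+1}$ because each $F_i$ was chosen to be $\mathcal{S}_{\alpha+1}$-maximal. Evaluating the supremum in the definition of $\|\cdot\|$ at the index $j = i$ and using $\sum_{n \in F_i} w_n = 1$ gives
$$\|1_{F_i}\|\ \geqslant\ \min F_i \cdot \sum_{n \in F_i} w_n\ =\ \min F_i.$$
Since the $F_j$'s are pairwise disjoint and satisfy $F_1 < F_2 < \cdots$, we have $\min F_i \geqslant i \to \infty$, so this lower bound grows without bound.

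The main step, and where the key idea lives, is choosing $B$ so that $\|1_B\|$ remains bounded independently of $i$ even though $|B| = |F_i|$ may be huge. I would spread $B$ across many $F_j$'s: pick $|F_i|$ distinct indices $j_1, \ldots, j_{|F_i|}$, all different from $i$, and select a single element $n_\ell \in F_{j_\ell}$ from each. Set $B = \{n_1, \ldots, n_{|F_i|}\}$. This is possible because there are infinitely many $F_j$'s and $|F_i|$ is finite. Then $|A| = |B|$, and $A \cap B = \emptyset$ follows from pairwise disjointness of the $F_j$'s.

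To bound $\|1_B\|$, note $\max_n |1_B(n)| = 1$, and each $F_j$ meets $B$ in at most one element by construction. By Lemma \ref{lh1}, $w_n \leqslant 1/\min F_j$ for every $n \in F_j$, hence
$$\min F_j \cdot \sum_{n \in F_j \cap B} w_n\ \leqslant\ \min F_j \cdot \frac{|F_j \cap B|}{\min F_j}\ \leqslant\ 1$$
for every $j$. Thus $\|1_B\| = 1$, and combining with the previous paragraph yields
$$\frac{\|1_A\|}{\|1_B\|}\ \geqslant\ \min F_i\ \longrightarrow\ \infty,$$
so no uniform constant $C$ can witness $\mathcal{S}_{\alpha+1}$-disjoint democracy. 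The argument is straightforward once one sees that the obstruction to democracy must be built by \emph{diluting} $B$ across many $F_j$'s rather than packing it into one, so that each weighted sum $\min F_j \cdot \sum_{n \in F_j \cap B} w_n$ stays $\leqslant 1$; conceptually this is the only nontrivial step, the rest being direct bookkeeping against the weight bound in Lemma \ref{lh1}.
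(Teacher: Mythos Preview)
Your proof is correct and takes a genuinely different route from the paper. Both arguments set $A=F_i$ and use $\|1_{F_i}\|\geqslant \min F_i\to\infty$, but for the comparison set the paper simply picks any $\mathcal{S}_\alpha$-set of cardinality at least $|F_i|$ disjoint from $F_i$ and invokes Claim~\ref{cl3} to get its norm below $3$. You instead build $B$ as a transversal---one point from each of $|F_i|$ distinct blocks $F_{j_\ell}$ with $j_\ell\neq i$---so that every weighted sum in the definition of $\|\cdot\|$ sees at most a single term and is bounded by $1$ directly from the pointwise estimate $w_n\leqslant 1/\min F_j$ of Lemma~\ref{lh1}. Your approach is more self-contained in that it bypasses Claim~\ref{cl3} (and hence the repeated-averages estimate $\|\min F_j\cdot\sum_{n\in F_j}w_ne_n\|_\alpha<3$ underlying it); the paper's approach, by contrast, reuses a fact already established for the positive direction, yielding a one-line conclusion at no additional cost.
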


\begin{proof}
Choose $F_i$, which is a maximal $\mathcal{S}_{\alpha+1}$-set.
Let $A$ be an $\mathcal{S}_\alpha$-set with $|F_i| \leqslant |A|$ and $F_i\sqcup A$. By how $F_i$'s are defined, $\|1_{F_i}\| = \min F_i$.
On the other hand, we have that $\|1_A\| < 3$ by Claim \ref{cl3}. Since $\|1_{F_i}\|/|1_A\| > \min F_i/3 \rightarrow \infty$ as $i\rightarrow\infty$, the basis $\mathcal{B}$ is not $\mathcal{S}_{\alpha+1}$-disjoint democratic. 
\end{proof}

By Claims \ref{cl2}, \ref{cl3}, and \ref{cl4}, our basis $\mathcal{B}$ is ($\infty$, $\alpha$)-quasi-greedy.

\section{Proof of Theorem \ref{m31}}

Before proceeding to the proof of Theorem \ref{m31}, we isolate the following simple lemma but omit its straightforward proof.

\begin{lem}\label{l40}
Let $\alpha < \omega_1$ and 
 $S$ be a finite set of positive integers with $\min S \geqslant 2$. Then there is an $m\in \mathbb{N}$ so that  $S \in \mathcal{S}_{\alpha + m}$.
\end{lem}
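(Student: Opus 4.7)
The plan is to induct on the cardinality $k := |S|$ with the ordinal $\alpha$ held fixed. Before starting the induction, I would record the elementary inclusion $\mathcal{S}_\beta \subseteq \mathcal{S}_{\beta+1}$ for every countable ordinal $\beta$: any nonempty $E \in \mathcal{S}_\beta$ satisfies $1 \leqslant \min E$, so setting $m = 1$ and $E_1 = E$ in the successor definition of $\mathcal{S}_{\beta+1}$ places $E$ there, while $\emptyset$ lies in every Schreier family. Iterating finitely many times gives $\mathcal{S}_{\alpha + j} \subseteq \mathcal{S}_{\alpha + j'}$ whenever $j \leqslant j'$ in $\mathbb{N}$, and a short transfinite induction through limit stages similarly yields $\mathcal{S}_0 \subseteq \mathcal{S}_\alpha$.

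For the base case $k \leqslant 1$, the set $S$ lies in $\mathcal{S}_0 \subseteq \mathcal{S}_\alpha \subseteq \mathcal{S}_{\alpha + 1}$, so $m = 1$ works. For the inductive step, suppose $k \geqslant 2$ and let $s_1 := \min S \geqslant 2$. I would split $S$ into two nonempty consecutive blocks $E_1 < E_2$ (take the first $\lceil k/2 \rceil$ elements and the last $\lfloor k/2 \rfloor$ elements of $S$); then $|E_i| < k$ and $\min E_i \geqslant s_1 \geqslant 2$. The inductive hypothesis supplies $m_1, m_2 \in \mathbb{N}$ with $E_i \in \mathcal{S}_{\alpha + m_i}$, and the preliminary inclusion promotes both blocks into $\mathcal{S}_{\alpha + M}$ where $M := \max(m_1, m_2)$. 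Since $2 \leqslant \min E_1$, the successor-step definition places $S = E_1 \cup E_2 \in \mathcal{S}_{\alpha + M + 1}$, closing the induction with $m = M + 1$.

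The hypothesis $\min S \geqslant 2$ is used precisely to enable a two-block split at every stage, and the only technicality worth double-checking is that the inclusion $\mathcal{S}_\beta \subseteq \mathcal{S}_{\beta+1}$ propagates through limit ordinals; this is immediate from the two-clause definition once the nonempty case is handled by the $m=1$ trick above.
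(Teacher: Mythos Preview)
Your argument is correct. The induction on $|S|$ with the two-block split works cleanly: the inclusion $\mathcal{S}_\beta \subseteq \mathcal{S}_{\beta+1}$ is justified exactly as you say, the containment $\mathcal{S}_0 \subseteq \mathcal{S}_\alpha$ is Lemma~\ref{lems2}(ii) in the paper, and the successor definition with two blocks uses precisely the hypothesis $\min S \geqslant 2$.

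There is nothing to compare against, however: the paper explicitly omits the proof of this lemma, calling it ``straightforward.'' Your write-up is a perfectly reasonable way to fill in that omission. If anything, one could shorten it slightly by first proving the special case $\alpha = 0$ (i.e., $S \in \mathcal{S}_m$ for some finite $m$) and then invoking the easy inclusion $\mathcal{S}_m \subseteq \mathcal{S}_{\alpha+m}$, but your direct approach is just as clean.
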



\begin{proof}[Proof of Theorem \ref{m31}]
Assume that our basis $(e_n)$ is greedy. Let $m\in\mathbb{N}$. By Konyagin and Temlyakov's characterization of greedy bases \cite{KT1}, we know that $(e_n)$ is $K$-unconditional and $\Delta$-democratic for some $K, \Delta \geqslant 1$. It follows from the definitions that $(e_n)$ is $K$-$\mathcal{S}_{\alpha + m}$-unconditional, $\Delta$-$\mathcal{S}_{\alpha + m}$-disjoint democratic, and $K$-quasi-greedy. By the proof of Proposition \ref{p20} and Theorem \ref{m1}, $(e_n)$ is $C$-$\mathcal{S}_{\alpha + m}$-greedy for some $C = C(K, \Delta)$.

Conversely, assume that $(e_n)$ is $C$-$\mathcal{S}_{\alpha + m}$-greedy for all $m\in\mathbb{N}$ and some uniform $C\geqslant 1$. We need to show that $(e_n)$ is unconditional and disjoint democratic. Let $A\subset\mathbb{N}$ be a finite set. Write $A = (A\cap \{1\})\cup (A\backslash \{1\})$. By Lemma \ref{l40}, there exists $m$ such that $A\backslash \{1\}\in \mathcal{S}_{\alpha + m}$. Hence, $\mathcal{S}_{\alpha + m}$-unconditionality implies that $\|P_{A\backslash \{1\}}\|\leqslant C+1$ (see Theorem \ref{m1}). Therefore, 
$$\|P_A\|\ \leqslant\ \|e_1^*\|\|e_1\| + C+1\ \leqslant\ c_2^2 + C + 1,$$
and so, $(e_n)$ is unconditional. Next, we show that $(e_n)$ is disjoint democratic. Pick finite disjoint sets $A, B\subset\mathbb{N}$ with $|A|\leqslant |B|$. Since $A\backslash \{1\}\in \mathcal{S}_{\alpha+m}$ for some sufficiently large $m$ and $(e_n)$ is $C$-$\mathcal{S}_{\alpha + m}$-disjoint democratic, $\|1_{A\backslash \{1\}}\|\leqslant C\|1_B\|$. Furthermore, 
$$\|1_{A\cap \{1\}}\|\ \leqslant\ c_2\ \leqslant\ c_2\sup_{n}\|e^*_n\|\|1_B\|\ \leqslant\ c_2^2\|1_B\|.$$
We obtain
$$\|1_A\|\ \leqslant\ (C+c_2^2)\|1_B\|.$$
Hence, $(e_n)$ is disjoint democratic. This completes our proof. 

Finally, we show that there exists a basis that is $\mathcal{S}_{\alpha+m}$-greedy for all $m\in \mathbb{N}$ but is not greedy. Let $\beta$ be the smallest limit ordinal that is greater than $\alpha + m$ for all $m\in \mathbb{N}$. Consider the canonical basis $(e_n)$ of the space $X_{\beta, \infty}$ in Subsection \ref{rt}. We have shown that $(e_n)$ is $\mathcal{S}_\beta$-greedy. By Corollary \ref{m30'}, $(e_n)$ is $\mathcal{S}_{\alpha + m}$-greedy for all $m$. However, since the basis is not unconditional, it is not greedy. 
\end{proof}

\section{Future research}
In this paper, we show that given a pair $(\alpha,\beta) \in (\omega_1\cup \{\infty\})^2$, if either $\alpha$ or $\beta$ is $\infty$ or if $(\alpha,\beta) = (0,0)$, there is a Banach space with an $(\alpha, \beta)$-quasi-greedy basis. The result is sufficient enough to prove Theorem \ref{m30}. 
A natural extension of our work is whether there is an $(\alpha,\beta)$-quasi-greedy basis for every pair $(\alpha,\beta) \in (\omega_1\cup \{\infty\})^2$. 

Regarding Theorem \ref{m31}, we would like to know whether an $\mathcal{S}_\alpha$-greedy basis for all countable ordinals $\alpha$ (with different greedy constants) is greedy. Similarly, must an $\mathcal{S}_\alpha$-unconditional basis for all countable ordinals $\alpha$ be unconditional?

\section{Appendix}

\begin{lem}\label{lems2} The following hold. 
\begin{enumerate}
\item [i)] If $F\in \mathcal{S}_\alpha$ for some $\alpha$ and $\min F = 1$, then $F = \{1\}$.
\item [ii)] For all ordinals $\alpha\geqslant 0$, $\mathcal{S}_0\subset \mathcal{S}_\alpha$.
\item [iii)] For all ordinals $\alpha\geqslant 2$, $\mathcal{S}_2\subset \mathcal{S}_\alpha$.
\end{enumerate}
\end{lem}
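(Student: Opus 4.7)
All three items of the lemma lend themselves to transfinite induction on $\alpha$, and the plan is to prove them in the stated order so that parts (i) and (ii) are available when handling part (iii).

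For (i), the base case $\alpha = 0$ is immediate from the definition of $\mathcal{S}_0$. In the successor step $\alpha = \gamma + 1$, if $F = \bigcup_{i=1}^m E_i$ with $m \leqslant E_1 < \cdots < E_m$ and $\min F = 1$, then $1 \in E_1$, so $m \leqslant \min E_1 = 1$ forces $m = 1$; hence $F = E_1 \in \mathcal{S}_\gamma$ with $\min E_1 = 1$, and the inductive hypothesis yields $F = \{1\}$. In the limit step, the condition $m \leqslant F$ together with $\min F = 1$ likewise forces $m = 1$, and the inductive hypothesis applied in $\mathcal{S}_{\alpha_1+1}$ finishes the argument.

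For (ii) I would again induct on $\alpha$, with the base case trivial. In the successor step the empty set arises as the empty union, and each singleton $\{n\}$ is witnessed by $m = 1 \leqslant n$ and $E_1 = \{n\} \in \mathcal{S}_\gamma$ via the inductive hypothesis. The limit step follows the same pattern, invoking the inductive hypothesis at $\mathcal{S}_{\alpha_1+1}$ for singletons and using the vacuously satisfied $m$-condition for the empty set.

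The substantive step is (iii), which I would prove by transfinite induction on $\alpha \geqslant 2$, with equality at the base. In the successor step $\alpha = \gamma + 1$ with $\gamma \geqslant 2$, any nonempty $F \in \mathcal{S}_2$ already lies in $\mathcal{S}_\gamma$ by the inductive hypothesis, and the single-block presentation $F = E_1$ with $m = 1 \leqslant \min F$ places $F$ in $\mathcal{S}_{\gamma+1}$. The main obstacle is the limit case. Given $F \in \mathcal{S}_2$, I would split on $\min F$: if $F = \emptyset$ or $\min F = 1$, then part (i) (in the second subcase) forces $F \in \mathcal{S}_0$, whence $F \in \mathcal{S}_\alpha$ by part (ii); if $\min F \geqslant 2$, I would choose $m = 2$ as the witness, noting that because the sequence $\alpha_k + 1 \nearrow \alpha$ is strictly increasing and starts at an ordinal $\geqslant 1$, we have $\alpha_2 + 1 \geqslant 2$, so the inductive hypothesis yields $\mathcal{S}_2 \subset \mathcal{S}_{\alpha_2+1}$ and hence $F \in \mathcal{S}_\alpha$. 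Selecting the correct witness $m$ at the limit stage, together with the appeal to (i) and (ii) to dispose of small values of $\min F$, is the only genuine point of difficulty.
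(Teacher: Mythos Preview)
Your proof is correct and follows the natural transfinite induction approach; the paper in fact omits the proof entirely, declaring it straightforward, so your argument fills in exactly what the authors left to the reader. The only point worth noting is that your limit-case argument in (iii), where you verify $\alpha_2+1\geqslant 2$ from strict monotonicity of $(\alpha_m+1)_m$ and then invoke the inductive hypothesis at $\alpha_2+1$, is the one genuinely non-automatic step, and you have handled it correctly.
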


\noindent We omit the straightforward proof of Lemma \ref{lems2}. For completeness, we include the easy proof of the following lemma.

\begin{lem}\label{8times}
Fix $\alpha\geqslant 2$ and $M\in [\mathbb{N}]$, $\min M \geqslant 3$.
Let $\ell_n = \min \alpha(M, n)$. It holds that 
$\ell_{n+1}\geqslant 8\ell_n$ for all $n\geqslant 1$. 
\end{lem}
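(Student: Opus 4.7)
The plan is to prove, by transfinite induction on $\alpha \geq 2$, the slightly stronger claim that
$$\max \supp(\alpha(M, n)) \ \geq\ 8\,\ell_n - 1 \qquad \text{for all } n \geq 1,\ M \in [\mathbb{N}],\ \min M \geq 3.$$
This implies the lemma, because by definition $\alpha(M, n+1)$ is built from $M_{n+1} = \{m \in M : m > \max \supp(\alpha(M, n))\}$, so $\ell_{n+1} \geq \min M_{n+1} \geq \max \supp(\alpha(M, n)) + 1 \geq 8\ell_n$.

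For the base case $\alpha = 2$, I would unravel the definition: $2(M, n) = \tfrac{1}{k}\sum_{i=1}^k 1(M_n, i)$, where $k = \min M_n = \ell_n \geq 3$. The first summand $1(M_n, 1)$ has support of cardinality $\ell_n$ starting at $\ell_n$, so its maximum is at least $2\ell_n - 1$. Then $1(M_n, 2) = 1(M_n', 1)$ (where $M_n'$ is the tail of $M_n$ past the first block) has minimum $\geq 2\ell_n$ and cardinality equal to its minimum, hence maximum $\geq 4\ell_n - 1$; the third summand similarly has minimum $\geq 4\ell_n$ and maximum $\geq 8\ell_n - 1$. Since $k \geq 3$ there are at least three summands, which gives the desired bound.

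For the inductive step I split on whether $\alpha$ is a successor or a limit. If $\alpha = \beta + 1$ with $\beta \geq 2$, then $\alpha(M, n) = \tfrac{1}{k}\sum_{i=1}^k \beta(M_n, i)$ and, by the inductive hypothesis applied to $\beta$ and the sequence $M_n$ (which satisfies $\min M_n \geq 3$) at index $1$, one has $\max \supp(\beta(M_n, 1)) \geq 8\ell_n - 1$, so $\max \supp(\alpha(M, n)) \geq 8\ell_n - 1$. If $\alpha$ is a limit ordinal, then $\alpha(M, n) = (\alpha_{k_{n-1}} + 1)(M_n, 1)$ with $k_{n-1} = \min M_n \geq 3$; since $(\alpha_m + 1)_m$ is strictly increasing in $m$, one has $\alpha_{k_{n-1}} + 1 \geq k_{n-1} \geq 3 \geq 2$, and the inductive hypothesis for the smaller ordinal $\alpha_{k_{n-1}} + 1 < \alpha$ gives the bound directly.

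The only genuine work is the base case $\alpha = 2$, where one has to chase three rounds of the $\mathcal{S}_1$-averaging in order to extract the factor of $8$. The successor and limit steps inherit the bound essentially for free, because the very first sub-block of the level-$\alpha$ average already witnesses the inequality $\max \supp \geq 8\min \supp - 1$ via the inductive hypothesis.
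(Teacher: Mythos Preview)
Your proof is correct but takes a genuinely different route from the paper's. You proceed by transfinite induction on $\alpha$, with the base case $\alpha=2$ done by explicitly iterating the $\mathcal{S}_1$-averaging three times (using $\ell_n\geqslant 3$), and the successor and limit steps passing the bound $\max\supp\geqslant 8\ell_n-1$ down to the first sub-block via the inductive hypothesis. The paper instead avoids induction entirely: it uses that $\supp(\alpha(M,n))$ is a \emph{maximal} $\mathcal{S}_\alpha$-set and an initial segment of $L_n$, together with the inclusion $\mathcal{S}_2\subset\mathcal{S}_\alpha$ (Lemma~\ref{lems2}(iii)), to conclude directly that $\max\supp(\alpha(M,n))\geqslant\max\supp(2(L_n,1))$ and then computes the right-hand side. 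The paper's argument is shorter and yields the much stronger bound $\ell_{n+1}\geqslant 2^{\ell_n}\ell_n$; your argument is more self-contained (it needs neither the maximality of the supports nor Lemma~\ref{lems2}) and hits the stated constant $8$ exactly.
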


\begin{proof}
Let $L_n = M\backslash \cup_{i=1}^{n-1}\supp(\alpha(M, i))$ for $n\geqslant 1$. Then $\min L_n = \ell_n$ for all $n\geqslant 1$. First, we show that, 
\begin{equation}\label{etrou}\max\supp (\alpha(M,n))\ \geqslant\ \max\supp (2(L_n, 1)), \forall n\geqslant 1.\end{equation}
Suppose, for a contradiction, for some $n$, 
$$\max\supp (\alpha(M, n))\ <\ \max\supp (2(L_n, 1)).$$
Let $E = \supp (\alpha(M, n))$ and $F = \supp (2(L_n,1))$. Then $E\subsetneq F$. Since $F\in \mathcal{S}_2$, $F\in \mathcal{S}_{\alpha}$ according to Lemma \ref{lems2}. That $E\subsetneq F$ and $F\in \mathcal{S}_\alpha$ contradict that $E$ is a maximal $\mathcal{S}_\alpha$-set. Therefore, for all $n\geqslant 1$, \eqref{etrou} holds.

We have for all $n\geqslant 1$,
$$\frac{\ell_{n+1}}{\ell_n}\ \geqslant\ \frac{\max\supp (\alpha(M, n))+1}{\ell_n}\ \geqslant\ \frac{\max\supp (2(L_n, 1))+1}{\ell_n}\ \geqslant\ \frac{2^{\ell_n} \ell_n}{\ell_n}\ \geqslant \ 8.$$
This completes our proof. 
\end{proof}


\begin{thebibliography}{9999}

\bibitem{AA} F. Albiac and J. L. Ansorena, Characterization of $1$-almost greedy bases, \textit{Rev. Mat. Complut.} \textbf{30} (2017), 13--24. 

\bibitem{AA2}
D. Alspach and S.A. Argyros, Complexity of weakly null sequences, \textit{Dissertationes Math.} \textbf{321} (1992), 1--44.


\bibitem{AK} F. Albiac and N. Kalton, \textit{Topics in Banach Space Theory}, second edition, Springer (2016).

\bibitem{AW} F. Albiac and P. Wojtaszczyk, Characterization of $1$-greedy bases, \textit{J. Approx. Theory} \textbf{138} (2006), 65--86.



\bibitem{AG} S. A. Argyros and I. Gasparis, Unconditional structures of weakly null sequences, Trans. Amer. Math. Soc. \textbf{353} (2001),  2019--2058.




\bibitem{AGR} S. A. Argyros, G. Godefroy, and H. P. Rosenthal, Descriptive set theory and Banach spaces, in W. B. Johnson and J. Lindenstrauss, eds., \textit{Handbook of the Geometry of Banach Spaces Vol. 2}, North Holland, 2003, pp. 1007--1069.

\bibitem{AMT} S. A. Argyros, S. Mercourakis, and A. Tsarpalias, Convex unconditionality and summability of weakly null sequences, Israel J. Math. \textbf{107} (1998), 157--193. 

\bibitem{AMS} S. A. Argyros, P. Motakis, and B. Sari, A study of conditional spreading sequences, \textit{J. Funct. Anal.} \textbf{273} (2017), 1205--1257.



\bibitem{AT} S. A. Argyros and A. Tolias, \textit{Methods in the Theory of Hereditarily Indecomposable Banach Spaces},
Mem. Amer. Math. Soc. \textbf{170} (2004).



\bibitem{BCF} K. Beanland, H. V. Chu, and C. E. Finch-Smith, Schreier Sets, Linear Recurrences, and Turán Sequences, to appear in \textit{Fibonacci Quart.} 


\bibitem{BHO}S. F. Bellenot, R. Haydon, and E. Odell, Quasi-reflexive and tree spaces constructed in the spirit of R. C. James, , Contemp. Math. \textbf{85} (1989), 19--43.




\bibitem{BBG} P. M. Bern\'{a}, O. Blasco, and G. Garrig\'os, Lebesgue inequalities for greedy algorithm in general bases, \textit{Rev. Mat. Complut.} \textbf{30} (2017), 369--392.


\bibitem{DFOS} S. J. Dilworth, D. Freeman, E. Odell, and T. Schlumprecht, Greedy bases for Besov spaces, \textit{Constr. Approx.} \textbf{34} (2011), 281--296.


\bibitem{DKKT} S. J. Dilworth, N. J. Kalton, D. Kutzarova, and V. N. Temlyakov, The thresholding greedy algorithm, greedy bases, and duality, \textit{Constr. Approx.} \textbf{19} (2003), 575--597.


\bibitem{KT1} S. V. Konyagin and V. N. Temlyakov, A remark on greedy approximation in Banach spaces, \textit{East J. Approx.} \textbf{5} (1999), 365--379.

\bibitem{KT} S. V. Konyagin and V. N. Temlyakov, Greedy approximation with regard
to bases and general minimal systems, \textit{Serdica Math. J.} \textbf{28} (2002), 305--328.

\bibitem{Od}
E. Odell, On schreier unconditional sequences, \textit{Contemp. Math.} \textbf{144} (1993),
197--201.

\bibitem{G}
G. Schechtman, No greedy bases for matrix spaces with mixed $\ell_p$ and $\ell_q$ norms, \textit{J. Approx. Theory} \textbf{184} (2014), 100-110.

\bibitem{W} P. Wojtaszczyk, Greedy algorithm for general biorthogonal systems, \textit{J. Approx. Theory} \textbf{107} (2000), 293--314. 



\end{thebibliography}
\end{document}